\theoremstyle{plain}
\newtheorem{theorem}{Theorem}[section]
\newtheorem*{theorem*}{Theorem}
\newtheorem{proposition}[theorem]{Proposition}
\newtheorem{lemma}[theorem]{Lemma}
\newtheorem{corollary}[theorem]{Corollary}
\newtheorem{definition}[theorem]{Definition}
\newtheorem{example}[theorem]{Example}
\theoremstyle{remark}
\numberwithin{equation}{section}
\newcommand{\bQ}{\mathbb{Q}}
\newcommand{\suchthat}{\,:\,}
\newcommand{\spam}{\operatorname{span}}
\newcommand{\Sym}{\ensuremath{\operatorname{Sym}}}
\newcommand{\fS}{{\mathfrak S}}
\newcommand{\NCSym}{\ensuremath{\operatorname{NCSym}}}
\newcommand{\slashp}{\mid}
\newcommand{\minel}{\hat{0}_n}
\newcommand{\maxel}{\hat{1}_n}
\newcommand{\mupi}{\mu_\Pi}
\newcommand{\mul}{\mu_L}
\newcommand{\bx}{\text{\bf x}}
\newlength\cellsize \setlength\cellsize{15\unitlength}
\newcommand\cellify[1]{\def\thearg{#1}\def\nothing{}%
\ifx\thearg\nothing
\vrule width0pt height\cellsize depth0pt\else
\hbox to 0pt{\usebox2\hss}\fi%
\vbox to 15\unitlength{
\vss
\hbox to 15\unitlength{\hss$#1$\hss}
\vss}}
\newcommand\tableau[1]{\vtop{\let\\=\cr
\setlength\baselineskip{-16000pt}
\setlength\lineskiplimit{16000pt}
\setlength\lineskip{0pt}
\halign{&\cellify{##}\cr#1\crcr}}}
\newcommand\expath[1]{%
\hbox to 0pt{\usebox3\hss}%
\vbox to 15\unitlength{
\vss
\hbox to 15\unitlength{\hss$#1$\hss}
\vss}}
\newcommand\bas[1]{\omit \vbox to \cellsize{ \vss \hbox to \cellsize{\hss$#1$\hss} \vss}}
\begin{document}

\title[noncommuting chromatic functions revisited]{Chromatic symmetric functions in noncommuting variables revisited}

\author{Samantha Dahlberg}
\address{
School of Mathematical and Statistical Sciences,
Arizona State University,
Tempe AZ 85287-1804, USA}
\email{sdahlber@asu.edu}

\author{Stephanie van Willigenburg}
\address{
 Department of Mathematics,
 University of British Columbia,
 Vancouver BC V6T 1Z2, Canada}
\email{steph@math.ubc.ca}

\thanks{
Both authors were supported  in part by the National Sciences and Engineering Research Council of Canada.}
\subjclass[2010]{Primary 05E05; Secondary 05A18, 05C15, 05C25, 16T30}
\keywords{chromatic symmetric function,   $e$-positive, Schur-positive, symmetric function in noncommuting variables}

\begin{abstract} In 1995 Stanley introduced a generalization of the chromatic polynomial of a graph $G$, called the chromatic symmetric function, $X_G$, which was generalized to noncommuting variables, $Y_G$, by Gebhard-Sagan in 2001. Recently there has been a renaissance in the study of $X_G$, in particular in classifying when $X_G$ is a positive linear combination of elementary symmetric or Schur functions. 

We extend this study from $X_G$ to $Y_G$, including establishing the multiplicativity of $Y_G$, and showing $Y_G$ satisfies the $k$-deletion property. Moreover, we completely classify when $Y_G$ is a positive linear combination of elementary symmetric functions in noncommuting variables, and similarly for Schur functions in noncommuting variables, in the sense of Bergeron-Hohlweg-Rosas-Zabrocki. We further establish the natural multiplicative generalization of the fundamental theorem of symmetric functions, now in noncommuting variables, and obtain numerous new bases for this algebra  whose generators are chromatic symmetric functions in noncommuting variables. Finally, we show that of all known symmetric functions in noncommuting variables, only all elementary and specified Schur ones can be realized  as $Y_G$ for some $G$.
\end{abstract}

\maketitle
\tableofcontents

\section{Introduction}\label{sec:intro}  In 1912, Birkhoff introduced the chromatic polynomial of a graph $G$ as a means to solve the four colour problem \cite{Birk}. In 1995 Stanley generalized this to the chromatic symmetric function of $G$, $X_G,$ \cite{Stan95}, which not only generalized theorems of the chromatic polynomial such as the Broken Circuit Theorem \cite{Stan95}, but also led to infinitely many new bases of the algebra of  symmetric functions, $\Sym$ \cite{ChovW, lollipop}, a triple-deletion rule \cite{Orellana}, and connections to representation theory and algebraic geometry \cite {MM}. However, much research on $X_G$ has been devoted to proving the Stanley-Stembridge conjecture \cite[Conjecture 5.5]{StanStem}, which when studied in terms of $X_G$ \cite[Conjecture 5.1]{Stan95} says that if a poset is $(3+1)$-free, then the chromatic symmetric function of its incomparability graph is a positive linear combination of elementary symmetric functions. The elementary symmetric functions arise in a variety of contexts, and one of the best known is the fundamental theorem of symmetric functions, which roughly states that  $\Sym$ is generated by the elementary symmetric functions indexed by positive integers. While the $(3+1)$-conjecture still stands, there has been much progress made towards it, for example, \cite{ChoHuh, Dladders, lollipop, Foley, FoleyKin, Gash, GebSag, GP, Hamel, HuhNamYoo, Tsujie, Wolfe}, and the related question of when $X_G$ is a positive linear combination of Schur functions \cite{Gasharov, Paw, SW, Stanley2}. Most of these results have been achieved by working directly with $X_G$, however, there has been notable success in employing its generalization to quasisymmetric functions \cite{ChoHuh, SW} and symmetric functions in noncommuting variables {$Y_G$} \cite{Dladders, GebSag}. 

The algebra of symmetric functions in noncommuting variables, $\NCSym$, was first studied by Wolf \cite{Wolf}, who aimed to provide an analogue of the fundamental theorem of symmetric functions in this setting. However, little more was done in this area until Rosas and Sagan gave a systematic study of $\NCSym$ in 2004 \cite{RS}, and also give a nice survey of this intervening work. They gave analogues in $\NCSym$ to well-known constructs in $\Sym$ such as Jacobi-Trudi determinants, and the RSK algorithm, and bases analogous to those in $\Sym$ with the exception of the basis of Schur functions. Such an analogue was found by Bergeron, Hohlweg, Rosas and Zabrocki who connected it to the Grothendieck bialgebra of the semi-tower of partition lattice algebras \cite{BHRZ}. Bergeron, Reutenauer, Rosas and Zabrocki furthermore introduced a natural Hopf algebra structure on $\NCSym$ \cite{BRRZ} and the antipode was subsequently computed by Lauve and Mastnak \cite{LauveM}. Bergeron and Zabrocki uncovered further algebraic structure by proving $\NCSym$ was free and cofree \cite{BZ}, and $\NCSym$ was also shown to be isomorphic to the algebra of rook placements by Can and Sagan \cite{CanSagan}. Moreover, $\NCSym$ is connected with the supercharacter theory of all unipotent upper-triangular matrices over a finite field \cite{28authors, Thiem}.

With the role of $\NCSym$ becoming  ever more prominent, it is therefore befitting that the many new results regarding $X_G$ be generalized to $Y_G$ and that the question of $Y_G$ being a positive linear combination of elementary symmetric functions or Schur functions in $\NCSym$ be answered. This paper achieves both of these goals, and uses $Y_G$ to establish a number of new results regarding the algebraic structure of $\NCSym$.

More precisely, this paper is structured as follows. In the next section we recall the relevant definitions. Then in Section~\ref{sec:newtools} we prove the multiplicativity of $Y_G$  in Proposition~\ref{prop:graph_mult}, generalizing Stanley's  \cite[Proposition 2.3]{Stan95}, and use it to establish a multiplicative version in $\NCSym$ of the fundamental theorem of symmetric functions in Theorem~\ref{the:fundamentaltheorem}. We also give a formula for $Y_G$ in terms of a M\"obius function  in Theorem~\ref{the:MobiusStan}, generalizing Stanley's  \cite[Theorem 2.6]{Stan95}. We also show in Proposition~\ref{prop:kdelYG} that $Y_G$ exhibits the triple-deletion property of $X_G$ proved by Orellana and Scott \cite[Theorem 3.1]{Orellana} and its generalization, the $k$-deletion property, proved by the authors \cite[Proposition 5]{lollipop}. In Section~\ref{sec:xposepos} we classify when $Y_G$ is a positive (or negative) linear combination of elementary symmetric functions or Schur functions in $\NCSym$ in Theorems~\ref{the:epos} and ~\ref{the:xpos}, respectively. In particular we show that $Y_G$ is \emph{always} a positive or negative linear combination in the latter case, and use this to show the same is true for all elementary symmetric functions in $\NCSym$ in Corollary~\ref{cor:eisxpos}. Lastly, in Section~\ref{sec:NCSymbases} we  show that the $Y_G$ generate new bases for $\NCSym$ in Theorem~\ref{the:independent}, generalizing the result of Cho and the second author \cite[Theorem 5]{ChovW}. We conclude by showing that, with the exception of all elementary symmetric functions in $\NCSym$ and some specified Schur functions in $\NCSym$, $Y_G$ is never a currently known function in $\NCSym$ in Proposition~\ref{prop:YGasS}, Theorem~\ref{the:YGasothers} and Theorem~\ref{the:YGasx}.

\section{Background}\label{sec:background} We begin by giving   necessary definitions and results that will be used throughout our paper.

Let $n$ be a positive integer. Then we say an \emph{integer partition} $\lambda = (\lambda _1, \lambda_2, \ldots , \lambda _{\ell(\lambda)})$ of $n$ is an unordered list of positive integers whose sum is $n$, and denote this by $\lambda\vdash n$. We call the $\lambda _i$ for $1\leq i \leq \ell(\lambda)$ the \emph{parts} of $\lambda$, call $\ell(\lambda)$ the \emph{length} of $\lambda$ and list the parts in weakly decreasing order. For example, $\lambda = (3,2,2,1) \vdash 8$ and $\ell(\lambda)=4$. We also write $\lambda = (1^{m_1}, 2^{m_2}, \ldots , n^{m_n})$ to indicate that $i$ appears in $\lambda$ $m _i$ times for $1\leq i \leq n$. For example, our previous $\lambda$ can be written as $\lambda = (1^1, 2^2, 3^1, 4^0, 5^0, 6^0, 7^0, 8^0)$. With this in mind, we define $\lambda ! = \lambda _1!\lambda _2! \cdots \lambda _{\ell(\lambda)}!$ and $\lambda ^! = m _1!m _2! \cdots m _n!$ similarly. For example, $(3,2,2,1)! = 3!2!2!1! = 24$ and $(3,2,2,1)^!=1!2!1!0!0!0!0!0!=2$.

Let $[n]=\{1,2,\ldots , n\}$. Then we say a \emph{set partition} $\pi$ of $[n]$ is a family of disjoint  {non-empty} sets $B_1, B_2, \ldots , B_{\ell(\pi)}$ whose union is $[n]$, and denote this by 
$$\pi = B_1/B_2/\cdots / B_{\ell(\pi)} \vdash [n].$$We call the $B_i$ for $1\leq i \leq \ell(\pi)$ the \emph{blocks} of $\pi$, call $\ell(\pi)$ the \emph{length} of $\pi$ and list the blocks by increasing least element. For ease of notation we usually omit the set parentheses and commas of the blocks. For example, if $\pi$ is the family of disjoint  sets $\{1,3,4\}, \{2,5\}, \{6\}, \{7,8\}$ then we write
$$\pi = 134/25/6/78 \vdash [8]$$and $\ell(\pi)=4$. Note that every set partition $\pi \vdash [n]$ determines an integer partition $\lambda \vdash n$ by
$$\lambda (\pi) = \lambda (B_1/B_2/\cdots / B_{\ell(\pi)})= (|B_1|, |B_2|, \ldots , |B_{\ell(\pi)}|)$$listed in weakly decreasing order. For example, $\lambda (134/25/6/78)= (3,2,2,1)$. For a finite set of integers, $S$, define $S+n = \{s+n\suchthat s\in S\}$. Then for two set  partitions $\pi \vdash [n]$ and $\sigma = B_1/B_2/\cdots / B_{\ell(\sigma)} \vdash [m]$ we define their \emph{slash product} to be
$$\pi \slashp \sigma = \pi/(B_1+n)/(B_2+n)/\cdots /(B_{\ell(\sigma)}+n) \vdash [n+m].$$ 

\begin{example}\label{ex:slashp} If $\pi = 134/25 \vdash [5]$ and $\sigma = 1/23 \vdash [3]$ then $\pi \slashp \sigma = 134/25/6/78 \vdash [8]$.
\end{example}

We say a set partition {$\pi$} is \emph{atomic} if there do not exist two non-empty set partitions $\sigma _1, \sigma _2$ such that $\pi = \sigma _1 \slashp \sigma _2$. It is not hard to see that given any set partition $\pi$, it can be written uniquely as
$$\pi=\alpha _1 \slashp \alpha _2 \slashp \cdots \slashp \alpha _k$$such that each $\alpha_i$ is non-empty and atomic, which we will call the \emph{atomic decomposition} of $\pi$.

\begin{example}\label{ex:atomic} The atomic decomposition of $\pi = 134/25/6/78 = 134/25 \slashp 1 \slashp 12.$ Note that for ease of readability larger spaces have been intentionally inserted around the slash product symbols.
\end{example}

The set partitions of $[n]$ are the elements of the \emph{partition lattice} $\Pi _n$ ordered by \emph{refinement}, namely for set partitions $\pi, \sigma \vdash [n]$ we have $\pi \leq \sigma$ if and only if every block of $\pi$ is contained in some block of $\sigma$. For example, $134/25/6/78 < 1346/25/78$. The partition lattice $\Pi _n$ has rank function $r(\pi) = n-\ell(\pi)$ and respective minimal and maximal elements
\begin{align*}
\minel&= 1/2/\cdots/n\\
\maxel&=12\cdots n.
\end{align*}

Its M\"obius function is known and satisfies
\begin{equation}\label{eq:0to1}
\mupi(\minel,\maxel)=(-1)^{n-1}(n-1)!
\end{equation}
and
\begin{equation}\label{eq:0topi}
\mupi(\minel,\pi)=\prod _i (-1)^{\lambda _i-1}(\lambda _i-1)!
\end{equation}
where $\lambda = \lambda(\pi)$.

We will now use set partitions to define the \emph{algebra of symmetric functions in noncommuting variables} $x_1, x_2, \ldots $
$$\NCSym \subset \bQ \ll x_1, x_2, \ldots \gg$$that is a graded algebra
$$\NCSym  = \NCSym ^0 \oplus \NCSym ^1 \oplus \cdots$$where $\NCSym ^0 = \spam \{1\}$. {The} $n$-th graded piece for $n\geq 1$ has the following bases
\begin{align*}
\NCSym ^n&= \spam\{ m_\pi \suchthat \pi\vdash [n]\} = \spam\{ p_\pi \suchthat \pi\vdash [n]\}\\
&= \spam\{ e_\pi \suchthat \pi\vdash [n]\} = \spam\{  {\bx_\pi} \suchthat \pi\vdash [n]\}\\
\end{align*}where these functions are defined as follows.

The \emph{monomial symmetric function in $\NCSym$}, $m_\pi$ where $\pi \vdash [n]$, is given by 
$$m_\pi = \sum _{(i_1, i_2, \ldots , i_n)} x_{i_1}x_{i_2} \cdots x_{i_n}$$summed over all tuples $(i_1, i_2, \ldots , i_n)$ with $i_j=i_k$ if and only if $j$ and $k$ are in the same block of $\pi$.

\begin{example}\label{ex:mpi}
$m_{13/2}=x_1x_2x_1+x_2x_1x_2+x_1x_3x_1+x_3x_1x_3+x_2x_3x_2+x_3x_2x_3+\cdots$
\end{example}

Meanwhile, the \emph{power sum symmetric function in $\NCSym$}, $p_\pi$ where $\pi \vdash [n]$, is given by 
$$p_\pi = \sum _{(i_1, i_2, \ldots , i_n)} x_{i_1}x_{i_2} \cdots x_{i_n}$$summed over all tuples $(i_1, i_2, \ldots , i_n)$ with $i_j=i_k$ if  $j$ and $k$ are in the same block of $\pi$.

\begin{example}\label{ex:ppi}
$p_{13/2}=x_1x_2x_1+x_2x_1x_2+\cdots + x_1^3+x_2^3 +\cdots $
\end{example}

Similarly,  the \emph{elementary symmetric function in $\NCSym$}, $e_\pi$ where $\pi \vdash [n]$, is given by 
$$e_\pi = \sum _{(i_1, i_2, \ldots , i_n)} x_{i_1}x_{i_2} \cdots x_{i_n}$$summed over all tuples $(i_1, i_2, \ldots , i_n)$ with $i_j\neq i_k$ if  $j$ and $k$ are in the same block of $\pi$.

\begin{example}\label{ex:epi}
$e_{13/2}= {x_1x_1x_2+x_1x_2x_2+x_2x_2x_1+x_2x_1x_1}+\cdots + x_1x_2x_3+x_2x_3x_4 +\cdots $
\end{example}

Our final basis, for now, was defined by Bergeron, Hohlweg, Rosas and Zabrocki \cite{BHRZ} to answer Rosas and Sagan's question \cite[Section 9]{RS} of whether there exists a basis for $\NCSym$ that reflects properties of Schur functions. Motivated by representation theory their \emph{Schur function in $\NCSym$}, $\bx_\pi$ where $\pi \vdash [n]$, is given by
\begin{equation}\label{eq:xasp}
\bx_\pi = \sum _{\sigma \leq \pi} \mupi (\sigma, \pi) p _\sigma
\end{equation}
with
\begin{equation}\label{eq:pasx}
p_\pi = \sum _{\sigma \leq \pi} \bx _\sigma .
\end{equation}

\begin{example}\label{ex:pasx}
$$p_{13/2}=\bx _{13/2}+\bx _{1/2/3}$$
\end{example}

If we have a basis $\{ b_\pi\} _{\pi\vdash [n], n\geq 1}$ of $\NCSym$ we say that $f=\sum _\pi c_\pi b_\pi$ is \emph{$b$-positive} if and only if $c_\pi \geq 0$ for all $\pi$, and is \emph{$b$-negative} if and only if $c_\pi \leq 0$ for all $\pi$.
We also note that the names for our bases were chosen as the literature developed because of the \emph{projection map}
$$\rho \suchthat \bQ\ll x_1, x_2, \ldots \gg \longrightarrow \bQ[[x_1, x_2, \ldots ]]$$that lets the variables commute, and in particular takes $\NCSym$ to $\Sym$, with its analogous bases. In particular it takes the monomial symmetric functions, power sum symmetric functions and elementary symmetric functions in $\NCSym$ to their respective scalar multiples in $\Sym$ \cite[Theorem 2.1]{RS}. 

The final concepts that we will need in order to define our objects of study come from graph theory. Let $G$ be a graph with vertices $V(G)$ and edges $E(G)$ and for our purposes we will always assume that $G$ is \emph{finite} and \emph{simple}. Furthermore, we will assume that $G$ is \emph{labelled}, namely if $G$ has $n$ vertices then the vertices are labelled distinctly with $1,2, \ldots ,n$, which we term \emph{$G$ with distinct vertex labels in $[n]$}. Two types of graph that will be particularly useful to us are \emph{trees}, which are connected graphs containing no cycles whose degree 1 vertices are each called a \emph{leaf}, and the complete graphs $K_n$ for $n\geq 1$ that consist of $n$ vertices, every pair of which are adjacent.

Furthermore, given $\pi = B_1/B_2/\cdots / B_{\ell(\pi)} \vdash [n]$  we define
$$K_\pi = K_{|B_1|}\cup {K_{|B_2|}} \cup \cdots \cup K_{|B_{\ell(\pi)}|}$$where $\cup$ denotes disjoint union of graphs, and the labels on $K_{|B_i|}$ are the elements of the block $B_i$.

\begin{example}\label{ex:Kpi} $K_{134/25/6/78}$ is the following graph.

$$\begin{tikzpicture}
\coordinate (A) at (0,0);
\coordinate (B) at (1,0);
\coordinate (C) at (0.5,1);
\draw[thick] (A)--(B)--(C)--(A);
\filldraw (A) circle (7pt);
\filldraw (B) circle (7pt);
\filldraw (C) circle (7pt);
\filldraw[white] (A) circle [radius=6pt] node {\textcolor{black}{$3$}};
\filldraw[white] (B) circle [radius=6pt] node {\textcolor{black}{$4$}};
\filldraw[white] (C) circle [radius=6pt] node {\textcolor{black}{$1$}};
\end{tikzpicture} 
\hspace{.2in}
\begin{tikzpicture}
\coordinate (A) at (0,0);
\coordinate (B) at (1,0);
\draw[thick] (A)--(B);
\filldraw (A) circle (7pt);
\filldraw (B) circle (7pt);
\filldraw[white] (A) circle [radius=6pt] node {\textcolor{black}{$2$}};
\filldraw[white] (B) circle [radius=6pt] node {\textcolor{black}{$5$}};
\end{tikzpicture} 
\hspace{.2in}
\begin{tikzpicture}
\coordinate (A) at (0,0);
\filldraw (A) circle (7pt);
\filldraw[white] (A) circle [radius=6pt] node {\textcolor{black}{$6$}};
\end{tikzpicture}
\hspace{.2in}
\begin{tikzpicture}
\coordinate (A) at (0,0);
\coordinate (B) at (1,0);
\draw[thick] (A)--(B);
\filldraw (A) circle (7pt);
\filldraw (B) circle (7pt);
\filldraw[white] (A) circle [radius=6pt] node {\textcolor{black}{$7$}};
\filldraw[white] (B) circle [radius=6pt] node {\textcolor{black}{$8$}};
\end{tikzpicture}$$
\end{example}

Two other tools that will be useful will be those of deletion and contraction. For the first of these, if $G$ is a graph and $\epsilon \in E(G)$, then $G-\epsilon$ denotes $G$ with $\epsilon$ deleted, and more generally if $S\subseteq E(G)$, then $G-S$ denotes $G$ with every $\epsilon \in S$ deleted. Meanwhile for the second of these, if $G$ and $\epsilon$ are as before, then $G/\epsilon$ denotes $G$ with $\epsilon$ contracted and its vertices at either end identified. However, what will be the apex of our attention will be the chromatic symmetric function in $\NCSym$, which we will define after one final notion. Given a graph $G$, we define a \emph{proper colouring} $\kappa$ of $G$ to be a function
$$\kappa \suchthat V(G) \longrightarrow \{1,2,\ldots \}$$such that if  {$u, v \in V(G)$} are adjacent, then  {$\kappa(u)\neq \kappa (v)$.}

\begin{definition}\label{def:YG} \cite[Definition 3.1]{GebSag} For a graph $G$ with distinct vertex labels in $[n]$ let the label $i$ be on vertex $v_i$. Then the \emph{chromatic symmetric function in $\NCSym$} is defined to be
$$Y_G=\sum _\kappa x_{\kappa(v_1)}x_{\kappa(v_2)}\cdots x_{\kappa(v_n)}$$where the sum is over all proper colourings $\kappa$ of $G$. If $G$ is the empty graph then $Y_G=1$.
\end{definition}

\begin{example}\label{ex:YG} If $G=K_{13/2}$, then our graph is
$$
\begin{tikzpicture}
\coordinate (A) at (0,0);
\coordinate (B) at (1,0);
\draw[thick] (A)--(B);
\filldraw (A) circle (7pt);
\filldraw (B) circle (7pt);
\filldraw[white] (A) circle [radius=6pt] node {\textcolor{black}{$1$}};
\filldraw[white] (B) circle [radius=6pt] node {\textcolor{black}{$3$}};
\end{tikzpicture}
\hspace{.2in}
\begin{tikzpicture}
\coordinate (A) at (0,0);
\filldraw (A) circle (7pt);
\filldraw[white] (A) circle [radius=6pt] node {\textcolor{black}{$2$}};
\end{tikzpicture}$$and $Y_{K_{13/2}} = {x_1x_1x_2+x_1x_2x_2+x_2x_2x_1+x_2x_1x_1}+\cdots + x_1x_2x_3+x_2x_3x_4 +\cdots = e_{13/2}$.
\end{example}

We conclude this section by recalling 
$$\rho (Y_G) = X_G$$where $X_G$ is the chromatic symmetric function originally defined by Stanley \cite{Stan95} and motivated the definition and study of its analogue $Y_G$ by Gebhard and Sagan \cite{GebSag}.

\section{New tools for chromatic symmetric functions in $\NCSym$}\label{sec:newtools} In this section we generalize some of the classical and contemporary results for the chromatic symmetric function in $\Sym$ to that in $\NCSym$. More precisely, given a graph $G$ we derive   $Y_G$ in terms of power sum symmetric functions in $\NCSym$ whose coefficients are obtained from the M\"obius function of the lattice of contractions of $G$, generalizing \cite[Theorem 2.6]{Stan95}. We  show that $Y_G$ is multiplicative, generalizing \cite[Proposition 2.3]{Stan95}. We also show that $Y_G$ satisfies the properties of $X_G$ known as triple-deletion \cite[Theorem 3.1]{Orellana} and more generally $k$-deletion \cite[Proposition 5]{lollipop}.

For the first result, given a graph $G$ with distinct vertex labels in $[n]$, we say $\pi = B_1/B_2/\cdots/B_{\ell(\pi)} \vdash [n]$ is a \emph{connected partition} of $G$ if the graph induced by the vertices whose labels belong to any block $B_i$ is connected. Let the \emph{lattice of contractions}, $L_G$, of $G$ be the set of all connected partitions of $G$ partially ordered by refinement, and let $\mul$ be its M\"obius function. Then we have the following generalization of \cite[Theorem 2.6]{Stan95},  {whose statement and proof are analogous to the original.}

\begin{theorem}\label{the:MobiusStan}For a graph $G$ with distinct vertex labels in $[n]$ we have for any $\pi\in L_G$ that  $\mu_L(\minel, \pi)\neq 0$ and 
$$Y_G=\sum_{\pi\in L_G}\mu_L(\minel, \pi)p_{\pi}.$$
\end{theorem}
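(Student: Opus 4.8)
The plan is to mimic Stanley's derivation of \cite[Theorem 2.6]{Stan95}, working in the noncommuting setting but keeping careful track of the order of variables. First I would expand $Y_G$ directly from Definition~\ref{def:YG}. Every colouring $\kappa$ (proper or not) of $G$ with distinct vertex labels in $[n]$ determines a set partition of $[n]$ by the fibres of $\kappa$, namely $i$ and $j$ lie in the same block if and only if $\kappa(v_i)=\kappa(v_j)$; call this partition $\pi(\kappa)$. Summing $x_{\kappa(v_1)}\cdots x_{\kappa(v_n)}$ over \emph{all} colourings whose induced partition refines to exactly $\sigma$ recovers $m_\sigma$, and summing over all colourings whose induced partition is $\leq\sigma$ recovers $p_\sigma$; this is just the definitions of $m_\sigma$ and $p_\sigma$ in $\NCSym$, together with the fact that the vertex order $v_1,\dots,v_n$ is fixed, so no reordering of variables occurs. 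Hence
$$
Y_G=\sum_{\substack{\sigma\vdash[n]\\ \sigma\ \mathrm{proper}}} m_\sigma,
$$
where $\sigma$ is \emph{proper} if no block of $\sigma$ contains both endpoints of an edge of $G$; equivalently, $\sigma$ is proper exactly when $\sigma$ is \emph{not} $\geq$ any atom coming from an edge of $G$.

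Next I would observe that the proper set partitions of $[n]$ are precisely the elements of $\Pi_n$ lying below no edge-atom, and that for a fixed $\pi\in L_G$ the set partitions $\sigma$ with $\sigma\leq\pi$ split into those that are proper and those that are not; but in fact the complementary order ideal is what matters. The cleaner route is: for $\pi\in L_G$ one has $p_\pi=\sum_{\sigma\leq\pi}$ (sum of $m_\tau$ over $\tau$ with... ) — rather, use that $p_\pi=\sum_{\sigma\leq_{\Pi_n}\pi} m_\sigma$. Then I would show that the interval $[\minel,\pi]$ in $\Pi_n$, intersected with "proper" partitions, has a product/lattice structure matching $[\minel,\pi]_{L_G}$, so that M\"obius inversion over $L_G$ is legitimate. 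Concretely, define $a_\pi=\sum_{\sigma\leq\pi,\ \sigma\ \mathrm{proper}} m_\sigma$ for $\pi\in L_G$; then $Y_G$ is the value at $\maxel$-analogue, and one checks $p_\pi=\sum_{\sigma\in L_G,\ \sigma\leq\pi} a_\sigma$ because every $\sigma\leq_{\Pi_n}\pi$ with $\pi$ connected is automatically expressible through connected refinements — this is where connectedness of $L_G$ enters. Applying M\"obius inversion on $L_G$ gives $a_\pi=\sum_{\sigma\leq\pi}\mu_L(\sigma,\pi)p_\sigma$, and the theorem is the statement $Y_G=a_{\hat{G}}$ evaluated appropriately; I'd phrase it so the top element is the connected partition whose blocks are the connected components, but since the formula is summed over all of $L_G$, I instead get directly $Y_G=\sum_{\pi\in L_G}\mu_L(\minel,\pi)p_\pi$ by inverting the relation $p_{\minel}$-side.

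For the non-vanishing claim $\mu_L(\minel,\pi)\neq 0$, I would argue that $L_G$ is a geometric lattice — it is the lattice of contractions of $G$, isomorphic to the lattice of flats of the graphic matroid of $G$ restricted below $\pi$ — and every interval in a geometric lattice has nonzero M\"obius value (the M\"obius function of a geometric lattice alternates in sign and is never zero on intervals, by Rota's theorem / the fact that geometric lattices are complemented and hence every interval is a nontrivial geometric lattice with $\mu\neq 0$). Alternatively, and more elementarily matching "analogous to the original": $\mul(\minel,\pi)$ factors as a product over the blocks $B_i$ of $\pi$ of $\mu$-values of connected-partition lattices of the induced connected subgraphs $G|_{B_i}$, and for a connected graph $H$ on $k$ vertices one shows $\mu_{L_H}(\hat 0,\hat 1)=(-1)^{k-1}(\text{number of acyclic orientations with unique fixed sink, or the like})\neq 0$ — the Whitney-number/broken-circuit argument. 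I would cite this as the standard fact for the bond lattice and not reprove it.

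The main obstacle I anticipate is purely bookkeeping: verifying that the passage from the sum over proper $\sigma\in\Pi_n$ to a sum indexed by $L_G$ is valid, i.e. that the "proper order ideal" in $\Pi_n$ is exactly the set of $\sigma$ admitting some connected refinement structure compatible with M\"obius inversion over $L_G$. In Stanley's commutative proof this is transparent because one can regroup freely; here one must confirm that letting the variables not commute causes no collision — which it does not, since the vertex order in Definition~\ref{def:YG} is fixed once and for all and every $m_\sigma$ and $p_\sigma$ in $\NCSym$ is defined with that same fixed positional order. So the noncommutativity is, in the end, a red herring for this particular statement, and the proof transcribes Stanley's verbatim with $X_G,p_\lambda$ replaced by $Y_G,p_\pi$; I would say exactly that.
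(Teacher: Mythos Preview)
Your closing instinct --- that the proof transcribes Stanley's verbatim with $X_G,p_\lambda$ replaced by $Y_G,p_\pi$ --- is correct, and your handling of $\mu_L(\minel,\pi)\neq 0$ via geometric lattices is fine (the paper simply cites Stanley for this). But your attempted transcription in the middle goes off the rails. The relation you propose, $p_\pi=\sum_{\sigma\in L_G,\ \sigma\leq\pi}a_\sigma$ with $a_\sigma=\sum_{\tau\leq\sigma,\ \tau\ \mathrm{proper}}m_\tau$, is false. Take $G$ the path $1\text{--}2\text{--}3$, so $L_G=\{1/2/3,\,12/3,\,1/23,\,123\}$ and the proper set partitions are $1/2/3$ and $13/2$. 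Then $a_{1/2/3}=a_{12/3}=m_{1/2/3}$, so $\sum_{\sigma\in L_G,\ \sigma\leq 12/3}a_\sigma=2m_{1/2/3}$, whereas $p_{12/3}=m_{12/3}+m_{1/2/3}$. The failure is structural: the $a_\sigma$'s only see \emph{proper} $\tau$, so no sum of them can ever produce the terms $m_\tau$ for non-proper $\tau$ that appear in $p_\pi$. Your M\"obius inversion is also running in the wrong direction to produce $\mu_L(\minel,\pi)$.

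The paper's argument (and Stanley's) inverts from above, not below. For $\pi\in L_G$ define $Y_\pi$ to be the sum of $x_\kappa$ over colourings $\kappa$ that are constant on each block of $\pi$ and satisfy $\kappa(u)\neq\kappa(v)$ whenever $u,v$ lie in different blocks and are adjacent. Every colouring of $G$ (proper or not) lands in exactly one $Y_\pi$: take the colour-class partition of $\kappa$ and refine each block into its connected components in $G$. With this definition one has $p_\sigma=\sum_{\pi\in L_G,\ \pi\geq\sigma}Y_\pi$ for every $\sigma\in L_G$, simply because the colourings contributing to $p_\sigma$ are exactly those constant on the blocks of $\sigma$, and these are partitioned by the $Y_\pi$ with $\pi\geq\sigma$. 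M\"obius inversion on $L_G$ then gives $Y_\sigma=\sum_{\pi\geq\sigma}\mu_L(\sigma,\pi)p_\pi$; setting $\sigma=\minel$ yields $Y_G=Y_{\minel}=\sum_{\pi\in L_G}\mu_L(\minel,\pi)p_\pi$. The noncommutativity is indeed a red herring, as you say, but only once you set up the inversion in this direction.
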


\begin{proof}
{By \cite[Equation (1)]{Stan95} we have that $|\mu_L(\minel, \pi)|> 0$ and hence $\mu_L(\minel, \pi)\neq 0$.}
For a set partition $\pi \in L_G$ define $$Y_{\pi}=\sum_{\kappa}x_{\kappa}$$
to be the sum over all special colourings, $\kappa$,   {of all the vertices that for $u,v\in V(G)$ is} given by: (i) 
 if $u$ and $v$ are in the same block of $\pi$ then $\kappa(u)=\kappa(v)$, and (ii) if instead $u$ and $v$ are in different blocks and there is an edge between $u$ and $v$, then $\kappa(u)\neq \kappa(v)$. 

Note that \emph{any} colouring $\kappa$ of $G$ contributes uniquely to one $Y_{\pi}$. We can see this by starting with any colouring $\kappa$ and form each block of its partition $\pi$ by colours, so that all vertices of the same colour are in the same block. Then we refine these blocks   further to respect connected components, so that $\pi$ is a connected partition of $G$.

Next using the definition of power sum symmetric functions in $\NCSym$ we have for  {$\sigma \in L_G$} that
$$p_{\sigma}=\sum_{\pi\geq \sigma,  {\pi\in L_G}} Y_{\pi}.$$
By M\"obius inversion we obtain the theorem, since when $\pi= \minel$ the definition of special colouring coincides with that of proper colouring, so $Y_{\minel} = Y_G$.
\end{proof}

This is not the only expression for $Y_G$ in terms of the power sum symmetric functions in $\NCSym$, and it is the next one that will be useful for our second result. {For it,}  given a graph $G$ and edge set $S\subseteq E(G)$ we define $\pi(S)$ to be the set partition whose blocks are determined by the vertex labels of the connected components of $G$ restricted to the edges in $S$. {For example, in Example~\ref{ex:Kpi} if $S=\{(1,3),(1,4),(7,8)\}$ then $\pi(S) = 134/2/5/6/78$.}

\begin{proposition}\cite[Proposition 3.6]{GebSag}\label{prop:gebhard_sagan}
For a graph $G$ we have $$Y_G=\sum_{S\subseteq E(G)}(-1)^{|S|} p_{\pi(S)}.$$
\end{proposition}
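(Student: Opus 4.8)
The plan is to prove this by inclusion–exclusion on the edges of $G$, mirroring the classical derivation of the power-sum expansion of $X_G$. First I would expand the right-hand side: for a fixed $S \subseteq E(G)$, the term $p_{\pi(S)}$ is, by definition of the power sum symmetric function in $\NCSym$, the sum of $x_{\kappa(v_1)} \cdots x_{\kappa(v_n)}$ over all colourings $\kappa$ that are constant on each block of $\pi(S)$, equivalently over all colourings $\kappa$ such that $\kappa(u) = \kappa(v)$ whenever $\{u,v\} \in S$. So
$$\sum_{S \subseteq E(G)} (-1)^{|S|} p_{\pi(S)} = \sum_{S \subseteq E(G)} (-1)^{|S|} \sum_{\kappa \text{ monochromatic on } S} x_{\kappa(v_1)} \cdots x_{\kappa(v_n)},$$
where ``monochromatic on $S$'' means $\kappa$ gives both endpoints of every edge in $S$ the same colour.

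Next I would switch the order of summation, grouping by the colouring $\kappa$ (ranging over \emph{all} colourings of $V(G)$, proper or not). For a fixed $\kappa$, let $M(\kappa) = \{\epsilon \in E(G) : \text{the two endpoints of } \epsilon \text{ receive the same colour under } \kappa\}$ be its set of ``bad'' (monochromatic) edges. Then $\kappa$ is monochromatic on $S$ precisely when $S \subseteq M(\kappa)$, so the coefficient of the monomial $x_{\kappa(v_1)} \cdots x_{\kappa(v_n)}$ coming from $\kappa$ is $\sum_{S \subseteq M(\kappa)} (-1)^{|S|}$. This is the standard alternating sum over subsets of a finite set: it equals $0$ if $M(\kappa) \neq \emptyset$ and equals $1$ if $M(\kappa) = \emptyset$. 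Since $M(\kappa) = \emptyset$ is exactly the condition that $\kappa$ is a proper colouring of $G$, only the proper colourings survive, each with coefficient $1$, and the sum collapses to $\sum_{\kappa \text{ proper}} x_{\kappa(v_1)} \cdots x_{\kappa(v_n)} = Y_G$ by Definition~\ref{def:YG}.

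One point that needs care, and which I expect to be the only real subtlety, is that we are working in the noncommutative ring $\bQ \ll x_1, x_2, \ldots \gg$: the interchange of the two summations and the collection of monomials must be done at the level of individual words $x_{i_1} x_{i_2} \cdots x_{i_n}$, not commutative monomials. But this is fine, since a colouring $\kappa$ corresponds bijectively to the word $x_{\kappa(v_1)} x_{\kappa(v_2)} \cdots x_{\kappa(v_n)}$ with a fixed slot for each labelled vertex, and each $p_{\pi(S)}$ is by definition a sum of such words; so the bookkeeping above is literally a reorganization of a (formally convergent) sum of words, with the coefficient of each word $x_{\kappa(v_1)} \cdots x_{\kappa(v_n)}$ computed exactly as stated. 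Once this is observed, the proof is just the two displayed manipulations followed by the $\{0,1\}$-dichotomy of the alternating subset sum.
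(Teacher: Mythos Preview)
Your proof is correct and is the standard inclusion--exclusion argument for this identity. Note, however, that the paper does not actually prove this proposition: it is quoted from \cite[Proposition~3.6]{GebSag} and used as a tool, so there is no ``paper's own proof'' to compare against. What you have written is essentially the argument Gebhard and Sagan give (adapted to the noncommutative setting, which as you correctly observe causes no difficulty because colourings of a labelled graph are in bijection with words of length $n$).
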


We also need the following result on the multiplicativity of the power sum symmetric functions in $\NCSym$.

\begin{lemma}\cite[Lemma 4.1 (i)]{BHRZ}\label{lem:powersum}For two set partitions $\pi$ and $\sigma$ we have
$$p_{\pi \slashp \sigma}=p_{\pi}p_{\sigma}.$$
\end{lemma}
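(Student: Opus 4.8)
The plan is to argue directly from the definition of the power sum symmetric function in $\NCSym$. Write $\pi \vdash [n]$ and $\sigma \vdash [m]$, so that $\pi \slashp \sigma \vdash [n+m]$, and recall that
$$p_{\pi \slashp \sigma} = \sum_{(i_1, \ldots , i_{n+m})} x_{i_1} x_{i_2} \cdots x_{i_{n+m}},$$
summed over all tuples $(i_1, \ldots , i_{n+m})$ with $i_j = i_k$ whenever $j$ and $k$ lie in a common block of $\pi \slashp \sigma$.

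The crux is a structural observation about the slash product: every block of $\pi \slashp \sigma$ is either a block of $\pi$, and hence a subset of $\{1, \ldots , n\}$, or of the form $B+n$ for a block $B$ of $\sigma$, and hence a subset of $\{n+1, \ldots , n+m\}$; in particular no block meets both intervals. Therefore the condition defining which tuples are summed in $p_{\pi \slashp \sigma}$ splits into two independent conditions: $(i_1, \ldots , i_n)$ must be one of the tuples summed in $p_\pi$, and $(i_{n+1}, \ldots , i_{n+m})$, after re-indexing the ground set $\{n+1, \ldots , n+m\}$ back to $[m]$, must be one of the tuples summed in $p_\sigma$. Conversely, the concatenation of any such pair of tuples is a valid tuple for $\pi \slashp \sigma$, so we obtain a bijection between the index set of $p_{\pi \slashp \sigma}$ and the product of the index sets of $p_\pi$ and $p_\sigma$.

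To finish, I would observe that concatenation of index tuples is precisely multiplication of the corresponding words in $\bQ \ll x_1, x_2, \ldots \gg$, so distributing $p_\pi p_\sigma = \big(\sum x_{i_1} \cdots x_{i_n}\big)\big(\sum x_{i_{n+1}} \cdots x_{i_{n+m}}\big)$ over its two sums reproduces $p_{\pi \slashp \sigma}$ term by term. There is no real obstacle here; the only point that requires care is keeping track of the shift by $n$ built into the definition of the slash product. It is worth noting that this strategy would not go through verbatim for $m_\pi$ or $e_\pi$, whose defining tuple conditions are ``if and only if'' statements and so do not decouple as cleanly; but for $p_\pi$ the one-directional ``if'' condition makes each block, and hence the two halves of $\pi \slashp \sigma$, independent, which is exactly what the argument exploits.
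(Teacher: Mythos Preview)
Your argument is correct. The paper does not actually prove this lemma; it is imported wholesale from \cite[Lemma~4.1~(i)]{BHRZ}, so there is nothing to compare against. Your direct verification from the definition is exactly the natural proof: since each block of $\pi\slashp\sigma$ lies entirely in $[n]$ or entirely in $\{n+1,\ldots,n+m\}$, the constraint ``$i_j=i_k$ whenever $j,k$ share a block'' factors, and concatenation of monomials in $\bQ\ll x_1,x_2,\ldots\gg$ gives the product.

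One small correction to your closing remark: the defining condition for $e_\pi$ is also one-directional (``$i_j\neq i_k$ \emph{if} $j,k$ share a block''), not an ``if and only if'', so in fact the identical argument proves $e_{\pi\slashp\sigma}=e_\pi e_\sigma$ as well. The paper establishes that identity later (Lemma~\ref{lem:eprod}) by the indirect route $e_\pi=Y_{K_\pi}$ together with multiplicativity of $Y_G$, but your direct approach would work there too. You are right, however, that the argument fails for $m_\pi$, whose condition genuinely is biconditional and does not decouple across the two halves.
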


Lastly, given two graphs $G$, with distinct vertex labels in $[n]$, and $H$, with distinct vertex labels in $[m]$, define $G\slashp H$ to be the disjoint union of $G$ and $H$ where the vertices corresponding to $G$ have labels in $[n]$ in the same relative order as $G$, and the vertices corresponding to $H$ have labels $\{n+1,n+2,\ldots ,n+m\}$ in the same relative order as $H$. 

\begin{example}\label{ex:GslashpH} If 
$$\begin{tikzpicture} \draw (-1,0) node {\textcolor{black}{$G =$}};
\coordinate (A) at (0,0);
\coordinate (B) at (1,0);
\coordinate (C) at (0.5,1);
\draw[thick] (A)--(B)--(C)--(A);
\filldraw (A) circle (7pt);
\filldraw (B) circle (7pt);
\filldraw (C) circle (7pt);
\filldraw[white] (A) circle [radius=6pt] node {\textcolor{black}{$3$}};
\filldraw[white] (B) circle [radius=6pt] node {\textcolor{black}{$4$}};
\filldraw[white] (C) circle [radius=6pt] node {\textcolor{black}{$1$}};
\end{tikzpicture}
\hspace{.2in}
\begin{tikzpicture}
\coordinate (A) at (0,0);
\coordinate (B) at (1,0);
\draw[thick] (A)--(B);
\filldraw (A) circle (7pt);
\filldraw (B) circle (7pt);
\filldraw[white] (A) circle [radius=6pt] node {\textcolor{black}{$2$}};
\filldraw[white] (B) circle [radius=6pt] node {\textcolor{black}{$5$}};
\end{tikzpicture} 
\hspace{.2in}
\begin{tikzpicture} \draw (-1,0) node {\textcolor{black}{$H =$}};
\coordinate (A) at (0,0);
\filldraw (A) circle (7pt);
\filldraw[white] (A) circle [radius=6pt] node {\textcolor{black}{$1$}};
\end{tikzpicture}
\hspace{.2in}
\begin{tikzpicture}
\coordinate (A) at (0,0);
\coordinate (B) at (1,0);
\draw[thick] (A)--(B);
\filldraw (A) circle (7pt);
\filldraw (B) circle (7pt);
\filldraw[white] (A) circle [radius=6pt] node {\textcolor{black}{$2$}};
\filldraw[white] (B) circle [radius=6pt] node {\textcolor{black}{$3$}};
\end{tikzpicture}$$then we get the following.
$$\begin{tikzpicture} \draw (-1.25,0) node {\textcolor{black}{$G\slashp H =$}};
\coordinate (A) at (0,0);
\coordinate (B) at (1,0);
\coordinate (C) at (0.5,1);
\draw[thick] (A)--(B)--(C)--(A);
\filldraw (A) circle (7pt);
\filldraw (B) circle (7pt);
\filldraw (C) circle (7pt);
\filldraw[white] (A) circle [radius=6pt] node {\textcolor{black}{$3$}};
\filldraw[white] (B) circle [radius=6pt] node {\textcolor{black}{$4$}};
\filldraw[white] (C) circle [radius=6pt] node {\textcolor{black}{$1$}};
\end{tikzpicture} 
\hspace{.2in}
\begin{tikzpicture}
\coordinate (A) at (0,0);
\coordinate (B) at (1,0);
\draw[thick] (A)--(B);
\filldraw (A) circle (7pt);
\filldraw (B) circle (7pt);
\filldraw[white] (A) circle [radius=6pt] node {\textcolor{black}{$2$}};
\filldraw[white] (B) circle [radius=6pt] node {\textcolor{black}{$5$}};
\end{tikzpicture} 
\hspace{.2in}
\begin{tikzpicture}
\coordinate (A) at (0,0);
\filldraw (A) circle (7pt);
\filldraw[white] (A) circle [radius=6pt] node {\textcolor{black}{$6$}};
\end{tikzpicture}
\hspace{.2in}
\begin{tikzpicture}
\coordinate (A) at (0,0);
\coordinate (B) at (1,0);
\draw[thick] (A)--(B);
\filldraw (A) circle (7pt);
\filldraw (B) circle (7pt);
\filldraw[white] (A) circle [radius=6pt] node {\textcolor{black}{$7$}};
\filldraw[white] (B) circle [radius=6pt] node {\textcolor{black}{$8$}};
\end{tikzpicture}$$
\end{example}

Then we have the following generalization of \cite[Proposition 2.3]{Stan95}.

\begin{proposition}\label{prop:graph_mult}For two graphs $G$, with distinct vertex labels in $[n]$, and $H$, with distinct vertex labels in $[m]$, we have
$$Y_{G\slashp H}=Y_GY_H.$$
\end{proposition}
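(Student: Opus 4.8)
The plan is to reduce the multiplicativity of $Y_G$ to the multiplicativity of the power sum symmetric functions in $\NCSym$, which is already available to us as Lemma~\ref{lem:powersum}, by using the edge-set expansion of $Y_G$ from Proposition~\ref{prop:gebhard_sagan}. First I would observe that the edge set of $G\slashp H$ is the disjoint union $E(G)\sqcup E(H)$, where the edges of $H$ now join vertices labelled by $[m]+n$. Hence every subset $S\subseteq E(G\slashp H)$ splits uniquely as $S = S_G\sqcup S_H$ with $S_G\subseteq E(G)$ and $S_H\subseteq E(H)$, and $|S| = |S_G| + |S_H|$, so $(-1)^{|S|} = (-1)^{|S_G|}(-1)^{|S_H|}$.

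The key step is then to identify $\pi(S)$ for $S = S_G\sqcup S_H$ inside $G\slashp H$. Since $G\slashp H$ has no edges between the $G$-part and the $H$-part, the connected components of $(G\slashp H)$ restricted to $S$ are exactly the connected components of $G$ restricted to $S_G$ (on the label set $[n]$) together with the connected components of $H$ restricted to $S_H$ (on the label set $[m]+n$). In other words, $\pi(S) = \pi(S_G)\slashp \pi(S_H)$, where $\pi(S_G)$ is computed in $G$ and $\pi(S_H)$ is computed in $H$ — this is precisely the relabelling-by-$n$ built into the slash product of set partitions. Applying Lemma~\ref{lem:powersum} gives $p_{\pi(S)} = p_{\pi(S_G)}\, p_{\pi(S_H)}$.

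Putting these together, I would write
\begin{align*}
Y_{G\slashp H} &= \sum_{S\subseteq E(G\slashp H)} (-1)^{|S|} p_{\pi(S)}
= \sum_{S_G\subseteq E(G)} \sum_{S_H\subseteq E(H)} (-1)^{|S_G|}(-1)^{|S_H|}\, p_{\pi(S_G)}\, p_{\pi(S_H)}\\
&= \left(\sum_{S_G\subseteq E(G)} (-1)^{|S_G|} p_{\pi(S_G)}\right)\left(\sum_{S_H\subseteq E(H)} (-1)^{|S_H|} p_{\pi(S_H)}\right)
= Y_G\, Y_H,
\end{align*}
where the factorization of the double sum into a product is legitimate because multiplication in $\NCSym$ distributes over the (possibly infinite) sums and the $p$-factor indexed by $S_G$ involves only variables-patterns on the first $n$ colour slots while the $p$-factor indexed by $S_H$ concerns the last $m$, matching the concatenation structure.

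The only genuinely delicate point — and the one I would spend the most care on — is the claim $\pi(S_G\sqcup S_H) = \pi(S_G)\slashp\pi(S_H)$, i.e. that the relabelling of $H$'s vertices by adding $n$ is exactly compatible with the slash product of set partitions as defined in Section~\ref{sec:background}; this needs the fact that no edge of $G\slashp H$ crosses between the two blocks of vertices, so no connected component spans both. Everything else is bookkeeping. An alternative, essentially equivalent, route would be to argue directly from Definition~\ref{def:YG}: a proper colouring of the disjoint union $G\slashp H$ is exactly an independent choice of a proper colouring $\kappa_G$ of $G$ and a proper colouring $\kappa_H$ of $H$, and the monomial $x_{\kappa(v_1)}\cdots x_{\kappa(v_{n+m})}$ factors as $(x_{\kappa_G(v_1)}\cdots x_{\kappa_G(v_n)})(x_{\kappa_H(v_1)}\cdots x_{\kappa_H(v_m)})$ because the $H$-vertices carry the labels $n+1,\dots,n+m$ and hence sit to the right in the word; summing over all such pairs gives $Y_G Y_H$ directly. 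I would likely present the power-sum argument as the main proof since it makes the role of Lemma~\ref{lem:powersum} transparent, but either is short.
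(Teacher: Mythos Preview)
Your proposal is correct and follows essentially the same route as the paper: apply Proposition~\ref{prop:gebhard_sagan}, split $S\subseteq E(G\slashp H)$ as $S_G\sqcup S_H$, observe $\pi(S)=\pi(S_G)\slashp\pi(S_H)$ because no edge crosses between the two parts, invoke Lemma~\ref{lem:powersum}, and factor the double sum. The paper's proof is slightly more terse about the set-partition identity, but the argument is identical.
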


\begin{proof}
Using Proposition~\ref{prop:gebhard_sagan} we have 
\begin{align*}
Y_{G\slashp H}&=\sum_{S\subseteq E(G)\cup E(H)}(-1)^{|S|}p_{\pi(S)}\\
&=\sum_{S_1\subseteq E(G), S_2\subseteq E(H)}(-1)^{|S_1|+|S_2|}p_{\pi(S_1\cup S_2)}.
\end{align*}
Note that because of the  vertex labelling of $G\slashp H$ we know that $\pi(S_1\cup S_2)=\pi(S_1)\slashp \pi( S_2)$. By Lemma~\ref{lem:powersum} we have 
\begin{align*}
Y_{G\slashp H}&=\sum_{S_1\subseteq E(G), S_2\subseteq E(H)}(-1)^{|S_1|+|S_2|}p_{\pi(S_1)\slashp \pi( S_2)}\\
&=\sum_{S_1\subseteq E(G), S_2\subseteq E(H)}(-1)^{|S_1|+|S_2|}p_{\pi(S_1)}p_{\pi( S_2)}\\
&=\sum_{S_1\subseteq E(G)}(-1)^{|S_1|} p_{\pi(S_1)}\sum_{S_2\subseteq E(H)}(-1)^{|S_2|}p_{\pi( S_2)}\\
&=Y_{G}Y_H.
\end{align*}\end{proof}

Our final generalization is that the triple-deletion property \cite[Theorem 3.1]{Orellana}, and more generally 
the $k$-deletion property \cite[Proposition 5]{lollipop}, extend to chromatic symmetric functions in $\NCSym$. 

\begin{proposition}\label{prop:kdelYG} Let $G$ be a graph  such that $\epsilon_1,\epsilon_2,\ldots,\epsilon_k\in E(G)$ form a $k$-cycle for $k\geq 3$. Then
$$\sum_{S\subseteq [k-1]}(-1)^{|S|}Y_{G-\cup_{i\in S}\{\epsilon_i\}}=0.$$In particular let $k=3$ and $\epsilon_1,\epsilon_2, \epsilon_3\in E(G)$ form a triangle. Then
$$Y_G=Y_{G-\epsilon_1} + Y_{G-\epsilon_2} - Y_{G-\{\epsilon_1, \epsilon _2\}}.$$
\end{proposition}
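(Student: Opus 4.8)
The plan is to mimic the known proof of the $k$-deletion property for $X_G$ from \cite{lollipop} and \cite{Orellana}, but to carry it out at the level of the power sum expansion in $\NCSym$ furnished by Proposition~\ref{prop:gebhard_sagan}. First I would expand each $Y_{G - \cup_{i\in S}\{\epsilon_i\}}$ using Proposition~\ref{prop:gebhard_sagan}, so that the left-hand side becomes a double sum over $S\subseteq[k-1]$ and over edge subsets $T\subseteq E(G)\setminus\{\epsilon_i : i\in S\}$, with sign $(-1)^{|S|+|T|}$ and term $p_{\pi(T)}$. The key observation is that $\pi(T)$ depends only on which connected components are formed, and that for a $k$-cycle on $\epsilon_1,\dots,\epsilon_k$, any subset $T$ of $E(G)$ containing at least one but not all of $\{\epsilon_1,\dots,\epsilon_{k-1}\}$ can be paired up so its contributions cancel. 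The cleanest route is to reorganize the double sum by fixing the ``core'' edge set $U = T \setminus \{\epsilon_1,\dots,\epsilon_{k-1}\}$ and the subset $A = T\cap\{\epsilon_1,\dots,\epsilon_{k-1}\}$, noting that $A$ must be disjoint from $S$; summing over all valid pairs $(S,A)$ with $S,A\subseteq[k-1]$ disjoint, the sign is $(-1)^{|S|}(-1)^{|U|+|A|}$ and the term is $p_{\pi(U\cup\{\epsilon_i:i\in A\})}$.

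Next I would fix $U$ and $A$ and sum over all $S\subseteq[k-1]\setminus A$: this contributes a factor $\sum_{S\subseteq[k-1]\setminus A}(-1)^{|S|}$, which is $0$ unless $A = [k-1]$, in which case it equals $1$. Hence the entire left-hand side collapses to $\sum_{U\subseteq E(G)\setminus\{\epsilon_1,\dots,\epsilon_k\}}\sum_{A}(-1)^{|U|+|A|}p_{\pi(U\cup\{\epsilon_i:i\in A\})}$ where only $A=[k-1]$ survives; but wait — I must be careful, because $U$ is allowed to contain $\epsilon_k$. The right normalization is to let $U$ range over subsets of $E(G)\setminus\{\epsilon_1,\dots,\epsilon_{k-1}\}$ (so $\epsilon_k$ may or may not lie in $U$). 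After the cancellation we are left with $\sum_U (-1)^{|U|+(k-1)} p_{\pi(U\cup\{\epsilon_1,\dots,\epsilon_{k-1}\})}$. The final point is that for each such $U$, adding the full path $\epsilon_1,\dots,\epsilon_{k-1}$ already connects the $k$ vertices of the cycle, so $\pi(U\cup\{\epsilon_1,\dots,\epsilon_{k-1}\}) = \pi(U\cup\{\epsilon_1,\dots,\epsilon_{k-1},\epsilon_k\})$; i.e. including $\epsilon_k$ or not gives the same set partition. Therefore the terms for $U$ and $U\cup\{\epsilon_k\}$ (for $U$ not containing $\epsilon_k$) have equal $p$-part but opposite signs, so they cancel in pairs, yielding $0$.

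For the $k=3$ special case, I would simply unwind $\sum_{S\subseteq[2]}(-1)^{|S|}Y_{G-\cup_{i\in S}\{\epsilon_i\}} = Y_G - Y_{G-\epsilon_1} - Y_{G-\epsilon_2} + Y_{G-\{\epsilon_1,\epsilon_2\}} = 0$ and rearrange to get the displayed identity $Y_G = Y_{G-\epsilon_1} + Y_{G-\epsilon_2} - Y_{G-\{\epsilon_1,\epsilon_2\}}$.

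I expect the main obstacle to be bookkeeping: setting up the index reorganization so that the sign cancellations are transparent and making sure no edge subset is double-counted or omitted, and in particular handling the role of $\epsilon_k$ carefully (it is \emph{not} one of the deleted edges indexed by $[k-1]$, but it does participate in forcing the relevant components to be connected). Once the sum is correctly re-indexed by the disjoint pair $(S,A)\subseteq[k-1]$ together with $U\subseteq E(G)\setminus\{\epsilon_1,\dots,\epsilon_{k-1}\}$, the two successive cancellations — first over $S$, forcing $A=[k-1]$, then over the presence of $\epsilon_k$ in $U$, using $\pi$-invariance — are routine. An alternative, possibly shorter, approach would be to deduce the statement directly from the known identity for $X_G$ by a "same proof works verbatim" argument, since Proposition~\ref{prop:gebhard_sagan} is the exact $\NCSym$-analogue of the $p$-expansion of $X_G$ that underlies the proof in \cite{lollipop}; but giving the self-contained computation above seems cleaner and avoids appealing to the internal structure of that proof.
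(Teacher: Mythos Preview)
Your argument is correct, but it takes a genuinely different route from the paper. The paper proves the identity by a sign-reversing involution directly on the signed set of pairs $(\kappa,S)$ with $S\subseteq[k-1]$ and $\kappa$ a proper colouring of $G-\cup_{i\in S}\{\epsilon_i\}$: since $\epsilon_k$ is present in every such graph, $\kappa(v_1)\neq\kappa(v_k)$, so walking along the cycle there is a smallest $j\in[k-1]$ with $\kappa(v_j)\neq\kappa(v_{j+1})$, and toggling $j$ in or out of $S$ is a fixed-point-free sign-reversing involution. Your approach instead passes through the $p$-basis via Proposition~\ref{prop:gebhard_sagan} and performs two successive algebraic cancellations: the sum over $S\subseteq[k-1]\setminus A$ forces $A=[k-1]$, and then toggling $\epsilon_k$ in $U$ cancels the remaining terms because the path $\epsilon_1,\dots,\epsilon_{k-1}$ already connects all cycle vertices. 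Both proofs ultimately rest on the same structural fact---that $\epsilon_k$ is redundant once the rest of the cycle is present---but package it differently: the paper's involution is shorter and works monomial-by-monomial without any basis expansion, while your argument is essentially the direct $\NCSym$ lift of the $p$-basis argument for $X_G$ and makes the parallel with \cite{lollipop} explicit. Your bookkeeping (in particular the handling of $\epsilon_k$ inside $U$) is correct as written.
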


\begin{proof}
We assume that $G$ has a $k$-cycle for $k\geq 3$, with edges $\epsilon_1,\epsilon_2,\dots, \epsilon_{k}$ and vertices $v_1,v_2,\dots, v_{k}$ as below.
\begin{figure}[h]
\begin{center}
\begin{tikzpicture}[scale=1]
\draw (0,0) ellipse (3 and 2);
\coordinate (1) at  (-1.7,0);
\coordinate (2) at (-1.4,.7);
\coordinate (3) at (-.5,1.2);
\coordinate (4) at (.5,1.2);
\coordinate (5) at (1.4,.7);
\coordinate (6) at  (1.7,0);
\coordinate (7) at (1.4,-.7);
\coordinate (8) at  (.5,-1.2);
\coordinate (9) at  (-.5,-1.2);
\coordinate (10) at  (-1.4,-.7);
\draw[->][black, thick] (1)--(2)--(3)--(4)--(5);
\draw[->][black, dashed] (5)--(6)--(7)--(8)--(9)--(10)--(1);
\draw (0,1.4) node {$\epsilon_{k}$};
\draw (-3,1.3) node {$G$};
\draw (-1,1.2) node {$\epsilon_{1}$};
\draw (-1.8,.5) node {$\epsilon_{2}$};
\draw (1.1,1.2) node {$\epsilon_{k-1}$};
\draw (-.4,.8) node {$v_{1}$};
\draw (.5,.8) node {$v_{k}$};
\draw (-1,.5) node {$v_{2}$};
\draw (-1.3,-.1) node {$v_{3}$};
\draw (1.4,.4) node {$v_{k-1}$};
\filldraw [black] (1) circle (4pt);
\filldraw [black] (2) circle (4pt);
\filldraw [black] (3) circle (4pt);
\filldraw [black] (4) circle (4pt);
\filldraw [black] (5) circle (4pt);
\end{tikzpicture}
\end{center}
\end{figure}

We will prove the formula using a sign-reversing involution without any fixed points. Our signed set will be pairs $(\kappa, S)$ where $S\subseteq[k-1]$ and $\kappa$ is a proper colouring on the graph $G-\cup_{i\in S}\{\epsilon_i\}$. The weight of this pair will be $(-1)^{|S|}x_{\kappa}$, that is, the monomial associated to the colouring $\kappa$ with sign determined by $|S|$. 
Since the edge $\epsilon_k$ is present in all graphs $G-\cup_{i\in S}\{\epsilon_i\}$ for any $S\subseteq [k-1]$  all colourings will have at least two colours on the vertices $v_1,v_2,\dots, v_{k}$. Hence, there will always exist a smallest $j\in [k-1]$ where the two colours on $v_j$ and $v_{j+1}$ are different. We map $(\kappa, S)$ to $(\kappa, T)$ where $T=S\cup\{j\}$ if $j\notin S$ and otherwise $T = S\setminus\{j\}$. The colouring $\kappa$ is a proper colouring on $G-\cup_{i\in S}\{\epsilon_i\}$ and $G-\cup_{i\in T}\{\epsilon_i\}$ since we are only including or excluding the edge $\epsilon_j$, which has adjacent vertices $v_j$ and $v_{j+1}$ with different colours. We can easily see that this is an involution that only switches the sign of the weight, and since there are no  fixed points our desired sum is zero. \end{proof}

\section{The classification of $\bx$-positive and $e$-positive chromatic symmetric functions in $\NCSym$} \label{sec:xposepos} In 2001 Gebhard and Sagan observed that \cite[Section 6]{GebSag}
\begin{quote} Unfortunately, even for some of the simplest graphs, $Y_G$ is usually not $e$-positive. The only graphs that are obviously $e$-positive are the complete graphs on $n$ vertices and their complements.
\end{quote} In this section we confirm their observation by proving in Theorem~\ref{the:epos} that $Y_G$ is $e$-positive if and only if $G$ is a union of complete graphs, and is never $e$-negative. 

A closely related question is to classify when $Y_G$ is $\bx$-positive. Remarkably, it is always either $\bx$-positive or $\bx$-negative, depending on the number of vertices and connected components of $G$, as we shall see in Theorem~\ref{the:xpos}. The elegant resolution to when $Y_G$ is $e$-positive or $\bx$-positive is in stark contrast to the analogous questions for $X_G$ in $\Sym$, which are still   open and the subject of much research, as discussed in the introduction. We will first work towards proving when $Y_G$ is $\bx$-positive or $\bx$-negative, and for this we require some tools.


The first tool is the Relabelling Proposition, which considers how, given a graph $G$, permuting the vertex labels of $G$ affects $Y_G$.  {Given  the symmetric group $\fS_n$, a permutation $\delta\in \fS_n$ and $f\in \NCSym$,} define $\delta\circ f$ to be the function after we permute the placements of the variables, rather than the subscripts. For example, having $\delta = 213$ acting on $m_{1/23}$ means we switch the first two variables so $\delta \circ m_{1/23}=m_{13/2}$. Also define for a graph $G$ with distinct vertex labels in $[n]$ a new graph $\delta(G)$, which is $G$ but we permute the labels of the vertices subject to $\delta$.  {Likewise, define for a set partition $\pi \vdash [n]$  a new set partition $\delta(\pi)$, which is $\pi$ but we permute the block elements subject to $\delta$.} The following is due to Gebhard and Sagan.

\begin{proposition}\emph{(Relabelling Proposition~\cite[Proposition 3.3]{GebSag})} \label{prop:relabeling}For a graph $G$ with distinct vertex labels in $[n]$ and $\delta\in\fS_n$ we have
$$Y_{\delta(G)}=\delta\circ Y_G.$$
\end{proposition}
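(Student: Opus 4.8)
The plan is to prove the Relabelling Proposition directly from the definition of $Y_G$ and the definition of the action of $\delta\in\fS_n$ on $\NCSym$. Recall that $\delta\circ f$ permutes the \emph{positions} of the variables in each monomial: if $w=x_{i_1}x_{i_2}\cdots x_{i_n}$ then $\delta\circ w = x_{i_{\delta^{-1}(1)}}x_{i_{\delta^{-1}(2)}}\cdots x_{i_{\delta^{-1}(n)}}$ (with the exact convention to be pinned down so that the worked example $\delta=213$, $\delta\circ m_{1/23}=m_{13/2}$ comes out right), and $\delta\circ f$ is obtained by applying this to every monomial of $f$ and summing.

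First I would set up notation: write $G$ with distinct vertex labels in $[n]$, and for each $i\in[n]$ let $v_i$ be the vertex labelled $i$, so that $Y_G=\sum_\kappa x_{\kappa(v_1)}\cdots x_{\kappa(v_n)}$ summed over proper colourings $\kappa$ of $G$. Then $\delta(G)$ is the same underlying graph but the vertex previously labelled $i$ is now labelled $\delta(i)$; equivalently, the vertex labelled $j$ in $\delta(G)$ is the vertex $v_{\delta^{-1}(j)}$ of $G$. The key observation is that a function $\kappa\colon V(G)\to\{1,2,\ldots\}$ is a proper colouring of $G$ if and only if it is a proper colouring of $\delta(G)$, since $\delta(G)$ and $G$ have the same vertex set and the same edge set — only the labels have changed. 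So the set of proper colourings is literally unchanged.

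The computation then runs as follows. By definition $Y_{\delta(G)}=\sum_\kappa x_{\kappa(w_1)}\cdots x_{\kappa(w_n)}$ where $w_j$ is the vertex of $\delta(G)$ labelled $j$, i.e.\ $w_j=v_{\delta^{-1}(j)}$. Hence the monomial for $\kappa$ in $Y_{\delta(G)}$ is $x_{\kappa(v_{\delta^{-1}(1)})}\cdots x_{\kappa(v_{\delta^{-1}(n)})}$, which is exactly $\delta$ applied (in the position-permuting sense) to the monomial $x_{\kappa(v_1)}\cdots x_{\kappa(v_n)}$ of $Y_G$ associated to the same $\kappa$. Summing over the common index set of proper colourings $\kappa$ gives $Y_{\delta(G)}=\delta\circ Y_G$, as claimed. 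A brief remark would note well-definedness: since $\delta\circ(-)$ is a linear operator on $\bQ\ll x_1,x_2,\ldots\gg$ it commutes with the (possibly infinite) sum, and it preserves $\NCSym$.

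The main obstacle is purely bookkeeping: getting the direction of the permutation right so that "permuting the placements of the variables, rather than the subscripts" matches "permuting the labels of the vertices", and checking this against the stated example $\delta=213$. Concretely one must verify that relabelling vertex $i\mapsto\delta(i)$ corresponds to the substitution that sends the monomial $x_{c_1}\cdots x_{c_n}$ to $x_{c_{\delta^{-1}(1)}}\cdots x_{c_{\delta^{-1}(n)}}$, and that this is the operator denoted $\delta\circ(-)$; once the conventions are aligned the proof is a one-line index chase. (This is already asserted in the excerpt as Gebhard--Sagan's \cite[Proposition 3.3]{GebSag}, so I would keep the exposition short and refer to that source for the convention check.)
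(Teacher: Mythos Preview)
Your argument is correct: the direct index chase from the definitions of $Y_G$ and of the position-permuting action $\delta\circ(-)$ is exactly how one proves this, and your convention check against the example $\delta=213$, $\delta\circ m_{1/23}=m_{13/2}$ is the right sanity test. Note, however, that the paper does not supply its own proof of this proposition; it simply quotes the statement from Gebhard--Sagan \cite[Proposition~3.3]{GebSag} and uses it as a black box. So there is nothing to compare against here---your write-up is essentially the standard proof one would expect to find in the cited source, and you could either present it as you have or, following the paper, omit it and cite \cite{GebSag}.
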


Consequently, for any graph $G$ with more than one component, using the Relabelling Proposition (Proposition~\ref{prop:relabeling})  we can calculate $Y_G$ using the connected components of $G$.

\begin{corollary} \label{cor:Y_G}
For any graph $G=G_1\cup G_2$ a disjoint union of graphs $G_1$ and $G_2$ with distinct vertex labels in $[n]$, let $\delta\in\fS _n$ be a permutation such that $\delta(G)=G_1\slashp G_2$. Then we have $$Y_G=\delta^{-1}\circ(Y_{G_1}Y_{G_2}).$$
\end{corollary}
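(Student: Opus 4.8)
The plan is to combine the Relabelling Proposition (Proposition~\ref{prop:relabeling}) with the multiplicativity of $Y_G$ (Proposition~\ref{prop:graph_mult}). First I would observe that since $G = G_1 \cup G_2$ is a disjoint union, the underlying graph $G_1 \slashp G_2$ is obtained from $G$ simply by reassigning the vertex labels: every vertex of $G_1$ keeps its relative order among a label set of size $|V(G_1)|$, and every vertex of $G_2$ is pushed up to the top $|V(G_2)|$ labels in its relative order. This is exactly a permutation of the labels $[n]$, so there is indeed some $\delta \in \fS_n$ with $\delta(G) = G_1 \slashp G_2$; moreover this $\delta$ can be taken to depend only on the partition of $[n]$ into the two vertex sets, not on the edge structure.

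Next I would apply Proposition~\ref{prop:relabeling} to $G$ and this $\delta$, giving $Y_{\delta(G)} = \delta \circ Y_G$, i.e. $Y_{G_1 \slashp G_2} = \delta \circ Y_G$. Now I would invoke Proposition~\ref{prop:graph_mult} to rewrite the left-hand side as $Y_{G_1} Y_{G_2}$, yielding $Y_{G_1} Y_{G_2} = \delta \circ Y_G$. Finally, since the action of $\fS_n$ on $\NCSym$ by permuting variable placements is an action (so $\delta \circ (\delta^{-1} \circ f) = f$), I would apply $\delta^{-1}$ to both sides to conclude $Y_G = \delta^{-1} \circ (Y_{G_1} Y_{G_2})$, as desired.

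There is no serious obstacle here — the statement is a direct corollary. The one point that deserves a sentence of care is that Proposition~\ref{prop:graph_mult} is stated for $Y_{G_1 \slashp G_2}$ where $G_1$ has labels in $[n_1]$ and $G_2$ in $[n_2]$ with $n_1 + n_2 = n$, which matches the situation after relabelling exactly, so the hypotheses are met. One should also note that $\delta$ is not unique (relabelling within each component gives other valid choices), but the identity holds for any such $\delta$, since the argument above used only the defining property $\delta(G) = G_1 \slashp G_2$.
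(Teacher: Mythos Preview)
Your proposal is correct and is exactly the argument the paper intends: the paper states the corollary without a separate proof, presenting it as a direct consequence of the Relabelling Proposition (Proposition~\ref{prop:relabeling}), with Proposition~\ref{prop:graph_mult} implicitly supplying the factorization $Y_{G_1\slashp G_2}=Y_{G_1}Y_{G_2}$. Your write-up simply makes both steps explicit and correctly notes the labeling hypothesis needed to invoke Proposition~\ref{prop:graph_mult}.
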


The second tool is Deletion-Contraction for $Y_G$ by Gebhard and Sagan. Define the {\it induced} monomial to be 
$$x_{i_1}x_{i_2}\cdots x_{i_{n-1}}\uparrow=x_{i_1}x_{i_2}\cdots x_{i_{n-1}}^2$$
 where we make an extra copy of the last variable at the end and extend this definition linearly. 
Given a set partition $\pi\vdash [n-1]$ we define $\pi\oplus n\vdash [n]$ to be the set partition where we place $n$ in the same block as $n-1$. For example, $14/23\oplus 5=145/23$. Gebhard and Sagan \cite[p 233]{GebSag} state  for $\pi\vdash [n-1]$ that 
\begin{equation}
m_{\pi}\uparrow = m_{\pi\oplus n}\text{ and } p_{\pi}\uparrow = p_{\pi\oplus n}.
\label{eq:induced}
\end{equation}

\begin{proposition}\emph{(Deletion-Contraction~\cite[Proposition 3.5]{GebSag})} \label{prop:DeletionContraction}
For a graph $G$ with distinct vertex labels in $[n]$ and an edge $\epsilon$ between vertices labelled $n$ and $n-1$ we have 
$$Y_G=Y_{G- \epsilon}-Y_{G/\epsilon}\uparrow$$where the new vertex remaining after the contraction of $\epsilon$ is labelled $n-1$.
\end{proposition}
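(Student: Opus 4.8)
The plan is to mimic the classical deletion--contraction argument for the chromatic polynomial (and for $X_G$), sorting the proper colourings of $G-\epsilon$ according to the colours assigned to the endpoints of $\epsilon$. Write $v_{n-1}$ and $v_n$ for the vertices labelled $n-1$ and $n$, which are the endpoints of $\epsilon$. Every proper colouring $\kappa$ of $G-\epsilon$ falls into exactly one of two classes: those with $\kappa(v_{n-1})\neq\kappa(v_n)$ and those with $\kappa(v_{n-1})=\kappa(v_n)$. The first step is to observe that a colouring with $\kappa(v_{n-1})\neq\kappa(v_n)$ is proper on $G-\epsilon$ if and only if it is proper on $G$, since the only edge of $G$ not in $G-\epsilon$ is $\epsilon$, whose endpoints already receive distinct colours; moreover the associated monomial $x_{\kappa(v_1)}\cdots x_{\kappa(v_n)}$ is the same whether recorded for $G$ or for $G-\epsilon$. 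Hence the contribution of this class to $Y_{G-\epsilon}$ is exactly $Y_G$.

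For the second class I would set up a bijection between proper colourings $\kappa$ of $G-\epsilon$ with $\kappa(v_{n-1})=\kappa(v_n)$ and proper colourings $\kappa'$ of $G/\epsilon$, where the vertex obtained by identifying $v_{n-1}$ and $v_n$ is labelled $n-1$ and the remaining vertices keep their labels $1,\dots,n-2$. The bijection sends $\kappa'$ to the colouring agreeing with $\kappa'$ on $v_1,\dots,v_{n-2}$ and assigning the colour of the merged vertex to both $v_{n-1}$ and $v_n$; one checks properness in both directions using that the edges of $G/\epsilon$ incident to the merged vertex are precisely the images of the edges of $G-\epsilon$ incident to $v_{n-1}$ or $v_n$, with parallel edges arising from common neighbours collapsed (harmless, since $G$ is simple). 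The key bookkeeping point is the monomials: $\kappa'$ contributes the length-$(n-1)$ word $x_{\kappa'(v_1)}\cdots x_{\kappa'(v_{n-2})}x_{\kappa'(v_{n-1})}$ to $Y_{G/\epsilon}$, whose last letter corresponds to the contracted vertex labelled $n-1$, while the corresponding $\kappa$ contributes $x_{\kappa(v_1)}\cdots x_{\kappa(v_{n-2})}x_{\kappa(v_{n-1})}x_{\kappa(v_n)}$, which is exactly the previous word with its last letter repeated --- that is, the result of applying $\uparrow$. Summing over the second class thus produces $Y_{G/\epsilon}\uparrow$.

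Putting the two classes together gives $Y_{G-\epsilon}=Y_G+Y_{G/\epsilon}\uparrow$, and rearranging yields the claimed identity $Y_G=Y_{G-\epsilon}-Y_{G/\epsilon}\uparrow$.

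The step I expect to require the most care is the monomial bookkeeping in the second class: one must invoke precisely the convention that the contracted vertex inherits the label $n-1$, so that it occupies the last position in every word counted by $Y_{G/\epsilon}$, in order for the operation $\uparrow$ --- which duplicates the last variable --- to match the word arising from $G-\epsilon$. Under any other labelling of the merged vertex the correspondence would demand a differently placed duplication, so confirming that the stated labelling convention is exactly the one making $\uparrow$ work is the crux of the argument.
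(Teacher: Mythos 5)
Your proof is correct: the paper itself gives no proof of this proposition, citing it from Gebhard--Sagan, and your argument --- splitting the proper colourings of $G-\epsilon$ according to whether the endpoints of $\epsilon$ receive distinct or equal colours, and matching the latter class bijectively with proper colourings of $G/\epsilon$ so that the monomials differ exactly by duplicating the last variable --- is precisely the standard argument from that source. You also correctly isolate the essential point, namely that labelling the contracted vertex $n-1$ places it in the final position of each word, which is what makes the operator $\uparrow$ (as defined in the paper, appending a copy of the last variable) implement the correspondence.
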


Before we show that all graphs $G$ have $Y_G$ be  $\bx$-positive or $\bx$-negative we will show that all trees $T$ have  $Y_G$ be $\bx$-positive or $\bx$-negative. 

\begin{lemma} \label{lem:trees}
For a tree $T$ with distinct vertex labels in $[n]$   we have  
$$Y_T=(-1)^{n-1}\underset{\text{no leaf is alone}}{\sum_{\sigma\vdash [n], S_{\sigma}=E(T)}}\bx_{\sigma}$$
where $S_{\sigma}$ is the set of all edges in the path from any $u$ to any $v$ in $V(T)$ where $u$ and $v$ are in the same block of $\sigma$ and ``no leaf is alone'' means that if vertex $v$ is a leaf, then the block containing $v$ has size at least two. 
\end{lemma}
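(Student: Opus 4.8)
The plan is to compute $Y_T$ directly from Proposition~\ref{prop:gebhard_sagan}, converting the $p$-expansion into the $\bx$-basis via~(\ref{eq:pasx}). By Proposition~\ref{prop:gebhard_sagan} we have $Y_T = \sum_{S \subseteq E(T)} (-1)^{|S|} p_{\pi(S)}$. Since $T$ is a tree, the map $S \mapsto \pi(S)$ from subsets of $E(T)$ to set partitions is injective: from a spanning forest one recovers its edge set by taking, within each block, the unique tree on those vertices that is a subgraph of $T$. In fact $\pi(S)$ ranges precisely over the connected partitions of $T$ (the elements of $L_T$), and $|S| = n - \ell(\pi(S))$. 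Substituting (\ref{eq:pasx}), namely $p_{\pi(S)} = \sum_{\sigma \le \pi(S)} \bx_\sigma$, gives
$$Y_T = \sum_{S \subseteq E(T)} (-1)^{n - \ell(\pi(S))} \sum_{\sigma \le \pi(S)} \bx_\sigma = \sum_{\sigma \vdash [n]} \bx_\sigma \!\!\sum_{\substack{S \subseteq E(T) \\ \sigma \le \pi(S)}} (-1)^{n - \ell(\pi(S))}.$$
So the coefficient of $\bx_\sigma$ is $(-1)^n$ times a signed count over forests $S$ whose partition refines $\sigma$, i.e. $\sigma \le \pi(S)$.

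The next step is to show this inner sum collapses. Fix $\sigma$. The forests $S$ with $\sigma \le \pi(S)$ are exactly those containing the edge set $S_\sigma$ defined in the statement: indeed $\sigma \le \pi(S)$ means every block of $\sigma$ lies in a block of $\pi(S)$, which (since $\pi(S)$ is the connected-component partition of the forest $S$) holds iff for every pair $u,v$ in a common block of $\sigma$ the unique $T$-path from $u$ to $v$ lies in $S$ — that is, iff $S_\sigma \subseteq S$. Now I would split into cases. If $S_\sigma$ is not a forest — equivalently, if $S_\sigma$ contains a cycle, which here simply cannot happen since $S_\sigma \subseteq E(T)$ and $T$ is acyclic — this case is vacuous; the real dichotomy is whether $S_\sigma = E(T)$ or $S_\sigma \subsetneq E(T)$. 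If $S_\sigma \subsetneq E(T)$, pick an edge $\epsilon \in E(T) \setminus S_\sigma$; the involution $S \mapsto S \mathbin{\triangle} \{\epsilon\}$ on $\{S : S_\sigma \subseteq S \subseteq E(T)\}$ is sign-reversing for the weight $(-1)^{n - \ell(\pi(S))} = (-1)^{|S|}$ (adding or removing one edge from a forest changes $\ell(\pi(S))$ by one), so the inner sum vanishes. If $S_\sigma = E(T)$, the only admissible forest is $S = E(T)$ itself, contributing $(-1)^{n - 1}$, so the coefficient of $\bx_\sigma$ is $(-1)^n(-1)^{n-1} = (-1)^{2n-1} = -1$... which is not $(-1)^{n-1}$ unless I track signs more carefully — so here I would recheck: $|E(T)| = n-1$, the coefficient of $\bx_\sigma$ is $(-1)^{n}\cdot(-1)^{n-1}=(-1)^{2n-1}$, and factoring $(-1)^{n-1}$ out front as in the statement leaves $(-1)^{n}\cdot(-1)^{-(n-1)}\cdot(\text{count})$; the clean way is to write $Y_T = (-1)^{n-1}\sum_\sigma c_\sigma \bx_\sigma$ and conclude $c_\sigma = 1$ exactly when $S_\sigma = E(T)$ and $c_\sigma = 0$ otherwise, after the $(-1)^{|E(T)|}=(-1)^{n-1}$ from the unique forest $S=E(T)$ cancels against the extracted $(-1)^{n-1}$.

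It remains to verify the ``no leaf is alone'' condition: I claim $S_\sigma = E(T)$ forces every leaf of $T$ to lie in a block of $\sigma$ of size $\ge 2$, and conversely. If a leaf $v$ has pendant edge $\epsilon = \{v, w\}$, then $\epsilon \in S_\sigma$ iff $\epsilon$ lies on some $T$-path between two elements of a common $\sigma$-block, which — since $v$ has degree $1$ — can only happen if $v$ itself is an endpoint of such a path, i.e. $v$ is in a block with some other vertex. So if $\{v\}$ is a singleton block of $\sigma$, then $\epsilon \notin S_\sigma$, whence $S_\sigma \ne E(T)$. For the converse one checks that if no leaf is alone then every pendant edge is forced into $S_\sigma$, and then an induction on the tree (removing a leaf) shows all edges are forced; I expect this direction to be the only place requiring real care. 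The main obstacle, then, is precisely this combinatorial characterization — proving $S_\sigma = E(T)$ is equivalent to ``no leaf of $T$ is a singleton block of $\sigma$'' — and I would handle it by induction on $|V(T)|$, peeling off a leaf $v$: a partition $\sigma$ of $[n]$ with no leaf alone restricts to a partition of $V(T) \setminus \{v\}$ that still has no leaf of $T - v$ alone (a new leaf $w$ created by deleting $v$ was adjacent to $v$, so lies in the same block as $v$, hence is non-singleton in the restriction only if that block had a third element — this edge case needs to be checked), and conversely.
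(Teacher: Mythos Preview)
Your core argument is correct and in fact cleaner than the paper's: you go straight from Proposition~\ref{prop:gebhard_sagan} to the $\bx$-basis via~(\ref{eq:pasx}), identify $\{S:\sigma\le\pi(S)\}=\{S:S_\sigma\subseteq S\subseteq E(T)\}$, and collapse the inner sum to $(-1)^{n-1}$ exactly when $S_\sigma=E(T)$. The paper instead begins with Deletion-Contraction on a leaf edge $\epsilon$, which has the effect of cancelling the terms where the leaf $n$ is alone before swapping the order of summation; this detour is not needed, and your direct route recovers the same endpoint in fewer steps. Your sign bookkeeping in the middle paragraph is muddled (you factor out $(-1)^n$ and then revert to $(-1)^{|S|}$), but you catch yourself and the conclusion $[\bx_\sigma]Y_T=(-1)^{|E(T)|}=(-1)^{n-1}$ when $S_\sigma=E(T)$, and $0$ otherwise, is right.

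There is one genuine error, though it does not damage the proof. In the last paragraph you claim that ``no leaf is alone'' and $S_\sigma=E(T)$ are \emph{equivalent}. The forward implication $S_\sigma=E(T)\Rightarrow$ no leaf alone is correct and is all the lemma needs, since the summation in the statement imposes \emph{both} conditions and the second is then redundant. The converse is false: on the path $1\text{--}2\text{--}3\text{--}4$ take $\sigma=12/34$; no leaf is alone, yet $S_\sigma=\{\{1,2\},\{3,4\}\}\ne E(T)$. Your inductive sketch for the converse breaks exactly at the ``edge case'' you flagged: removing the leaf $1$ creates the new leaf $2$, and the restricted partition $2/34$ has $2$ alone, so the hypothesis does not pass to $T-v$. (Your parenthetical also conflates ``$w$ is adjacent to $v$ in $T$'' with ``$w$ is in the same $\sigma$-block as $v$''.) Drop the converse entirely; once you have shown $S_\sigma=E(T)\Rightarrow$ no leaf alone, the formula in the lemma follows immediately from what you have already proved.
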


Before we prove this lemma we give a small but illustrative example.

\begin{example}\label{ex:YGasx} If 
$$\begin{tikzpicture} \draw (-1,0) node {\textcolor{black}{$G =$}};
\coordinate (A) at (0,0);
\coordinate (B) at (1,0);
\coordinate (C) at (2,0);
\draw[thick] (A)--(B)--(C);
\filldraw (A) circle (7pt);
\filldraw (B) circle (7pt);
\filldraw (C) circle (7pt);
\filldraw[white] (A) circle [radius=6pt] node {\textcolor{black}{$1$}};
\filldraw[white] (B) circle [radius=6pt] node {\textcolor{black}{$2$}};
\filldraw[white] (C) circle [radius=6pt] node {\textcolor{black}{$3$}};
\end{tikzpicture}$$
then $Y_G=(-1)^2(\bx_{123}+ \bx_{13/2}) = \bx_{123}+ \bx_{13/2}$.
\end{example}

\begin{proof}
Let $T$ be a tree on $n$ vertices  with edge set $E$. Without loss of generality say $T$ has a leaf at vertex $n$ connected to vertex $n-1$, by the Relabelling Proposition (Proposition~\ref{prop:relabeling}). Call this edge $\epsilon$ and  $\bar E=E\setminus \{\epsilon\}$. Using Deletion-Contraction (Proposition~\ref{prop:DeletionContraction}), Proposition~\ref{prop:gebhard_sagan}  and Equation~\eqref{eq:pasx} we have 
\begin{align*}
Y_T&=Y_{T-\epsilon}-Y_{T/\epsilon}\uparrow\\
&=\sum_{S\subseteq \bar E}(-1)^{|S|}p_{\pi(S)|1}-\sum_{S\subseteq \bar E}(-1)^{|S|}p_{\pi(S)\oplus n}\\
&=\sum_{S\subseteq \bar E}\sum_{\sigma\leq\pi(S)|1} (-1)^{|S|}\bx_{\sigma}-\sum_{S\subseteq \bar E}\sum_{\sigma\leq\pi(S)\oplus n}(-1)^{|S|}\bx_{\sigma}.
\end{align*}
Between these two sums of sums we cancel out all the $\bx_{\sigma}$ associated to $\sigma\leq \pi(S)|1$ in the first sum with the identical term in the second sum since certainly $\sigma\leq \pi(S)\oplus n$. We then have
\begin{align*}
Y_T
&=-\sum_{S\subseteq \bar E}\underset{\text{$n$ is not alone}}{\sum_{\sigma\leq\pi(S)\oplus n}}(-1)^{|S|}\bx_{\sigma}
\end{align*}
where  ``$n$ is not alone'' in $\sigma\vdash[n]$ means that the block containing $n$ has at least two elements. 
Our next step will be to switch the order of summation. To do this, given a $\sigma$ where $n$ is not alone, we must identify all  $S\subseteq \bar E$ that satisfy $\sigma\leq \pi(S)\oplus n$. 
Consider $S_{\sigma}\subseteq {E}$ the collection of all edges in the path between $u$ and $v$ for all $u$ and $v$ in the same block of $\sigma$. 
 
First we will show that $\sigma\leq \pi(S_{\sigma}\setminus \{\epsilon\})\oplus n$. We will show this by considering any $a,b$ in the same block of $\sigma$ and show $a,b$ are in the same block of $\pi(S_{\sigma}\setminus \{\epsilon\})\oplus n$. Say that $a,b\neq n$ are in the same block of $\sigma$, we know that all edges in the unique path between $a$ and $b$ are in $S_{\sigma}$ and this path avoids the edge $\epsilon$. This guarantees that $a$ and $b$ are in the same block in $\pi(S_{\sigma}\setminus \{\epsilon\})$. {Say that $a\neq n$ and $b=n$ are} in the same block of $\sigma$. Then $S_{\sigma}$ certainly contains $\epsilon$.  {This} guarantees that all edges in the unique path from $n$ to $n-1$ to $a$ are in $S_{\sigma}$. This {then} guarantees that $a$ and $n-1$ are in the same block in $\pi(S_{\sigma}\setminus \{\epsilon\})$, which implies $a$ and $n$ are in the same block in $\pi(S_{\sigma}\setminus \{\epsilon\})\oplus n$. 

Next we will show that $S_{\sigma}\setminus \{\epsilon\} \subseteq S$ if and only if $\sigma \leq \pi(S)\oplus n$, which identifies all the $S\subseteq \bar E$ that we need. Certainly if $S_{\sigma}\setminus \{\epsilon\} \subseteq S$ we have $\sigma\leq \pi(S_{\sigma}\setminus \{\epsilon\})\oplus n \leq \pi(S)\oplus n$. Say instead that $\sigma \leq \pi(S)\oplus n$ and $\bar{uv}\neq \epsilon$ is an edge in $S_{\sigma}$, which we want to show is also in $S$. Because $\bar{uv}\in S_{\sigma}$ we know that $u$ and $v$ are on some path $P$ in $T$ with end points $a$ and $b$ in the same block of $\sigma$. {If} $a,b\neq n$ then this means $a$ and $b$ are in the same block of $\pi(S)$, which only happens when all edges in the path $P$ are in $S$, so $\bar{uv}\in S$. If instead $a\neq n$ and $b=n$, then we still have $a$ and $n$ in the same block of $\pi(S)\oplus n$ meaning that $n-1$ is in the same block as $a$ similarly implying that all edges between $a$ and $n-1$, which includes $\bar{uv}$, are in $S$. 
Switching the order of summation we have 
\begin{align*}
Y_T
&=-\underset{\text{$n$ is not alone}}{\sum_{\sigma}}{\sum_{S_\sigma\setminus\{\epsilon\}\subseteq S\subseteq \bar E}}(-1)^{|S|}\bx_{\sigma}.
\end{align*}
Note that unless $S_\sigma\setminus\{\epsilon\}=\bar E$ the inner sum is 0, so we have 
\begin{align*}
Y_T
&=-\underset{\text{$n$ is not alone}}{\sum_{\sigma, S_\sigma\setminus\{\epsilon\}=\bar E}}(-1)^{|\bar E|}\bx_{\sigma}=(-1)^{n-1}\underset{\text{$n$ is not alone}}{\sum_{\sigma, S_{\sigma}\setminus\{\epsilon\}=\bar E}}\bx_{\sigma}
=(-1)^{n-1}\underset{\text{$n$ is not alone}}{\sum_{\sigma, S_{\sigma}=E}}\bx_{\sigma}
\end{align*}
after additionally using the fact that $T$ is a tree so $|\bar E|=n-2$. The lemma then follows by observing the same proof applies to every leaf.
\end{proof}

We will now show that inducing an $\bx$-positive function preserves $\bx$-positivity. For this we need to recall the following.

\begin{lemma}\label{lem:xprod}\cite[Lemma 4.2 (i)]{BHRZ} For two set partitions $\pi$ and $\sigma$ we have
$$\bx_{\pi \slashp \sigma}=\bx_{\pi}\bx_{\sigma}.$$
\end{lemma}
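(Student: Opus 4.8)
The plan is to expand both sides in the power sum basis via the defining relation~\eqref{eq:xasp} and reduce the identity to two structural facts about the slash product: that the lower interval of $\pi\slashp\sigma$ in the partition lattice factors as a direct product, and that the power sums are already multiplicative (Lemma~\ref{lem:powersum}). Concretely, I would start from
$$\bx_{\pi\slashp\sigma}=\sum_{\tau\leq\pi\slashp\sigma}\mupi(\tau,\pi\slashp\sigma)\,p_\tau,$$
and aim to show the index set and each summand factor so that the double sum splits as a product of the defining expansions of $\bx_\pi$ and $\bx_\sigma$.

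\textbf{The key structural step.} First I would establish the poset isomorphism
$$[\hat{0}_{n+m},\,\pi\slashp\sigma]\;\cong\;[\hat{0}_n,\pi]\times[\hat{0}_m,\sigma].$$
The crucial observation is that the blocks of $\pi\slashp\sigma$ are precisely the blocks of $\pi$ (all contained in $[n]$) together with the blocks of $\sigma$ shifted up by $n$ (all contained in $\{n+1,\ldots,n+m\}$). Hence if $\tau\leq\pi\slashp\sigma$, every block of $\tau$ sits inside one block of $\pi\slashp\sigma$ and so lies entirely within $[n]$ or entirely within $\{n+1,\ldots,n+m\}$. Letting $\tau_1$ be the restriction of $\tau$ to $[n]$ and $\tau_2$ the restriction to $\{n+1,\ldots,n+m\}$ shifted down by $n$, one gets $\tau=\tau_1\slashp\tau_2$ with $\tau_1\leq\pi$ and $\tau_2\leq\sigma$; since all elements of $[n]$ precede those above $n$, the least-element ordering is automatically respected, so this decomposition is well defined and unique. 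Conversely $\tau_1\slashp\tau_2\leq\pi\slashp\sigma$ whenever $\tau_1\leq\pi$ and $\tau_2\leq\sigma$, giving the claimed bijection; checking it preserves refinement in both directions yields the isomorphism.

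\textbf{Assembling the proof.} With the isomorphism in hand, the product formula for Möbius functions of a product of posets gives, for $\tau=\tau_1\slashp\tau_2$,
$$\mupi(\tau,\pi\slashp\sigma)=\mupi(\tau_1,\pi)\,\mupi(\tau_2,\sigma),$$
because the interval $[\tau,\pi\slashp\sigma]$ is isomorphic to $[\tau_1,\pi]\times[\tau_2,\sigma]$. Combining this with Lemma~\ref{lem:powersum}, which gives $p_\tau=p_{\tau_1\slashp\tau_2}=p_{\tau_1}p_{\tau_2}$, I would rewrite the sum as an iterated sum over $\tau_1\leq\pi$ and $\tau_2\leq\sigma$ and factor it:
\begin{align*}
\bx_{\pi\slashp\sigma}
&=\sum_{\tau_1\leq\pi}\sum_{\tau_2\leq\sigma}\mupi(\tau_1,\pi)\,\mupi(\tau_2,\sigma)\,p_{\tau_1}p_{\tau_2}\\
&=\Bigl(\sum_{\tau_1\leq\pi}\mupi(\tau_1,\pi)\,p_{\tau_1}\Bigr)\Bigl(\sum_{\tau_2\leq\sigma}\mupi(\tau_2,\sigma)\,p_{\tau_2}\Bigr)
=\bx_\pi\bx_\sigma,
\end{align*}
the last equality again by~\eqref{eq:xasp}.

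\textbf{Main obstacle and edge cases.} The routine algebra at the end is harmless; the one step requiring genuine care is the structural claim that every $\tau\leq\pi\slashp\sigma$ decomposes uniquely as a slash product and that this decomposition is a poset isomorphism onto the product interval, since everything downstream (Möbius multiplicativity, the index change in the double sum) rests on it. I would also note the degenerate cases where $\pi$ or $\sigma$ is empty, for which $\pi\slashp\sigma$ reduces to the nonempty factor and the identity holds trivially since $\bx$ of the empty partition is the unit of $\NCSym$. An alternative route worth mentioning would be to verify the identity by comparing both sides after expanding in the $p$-basis using~\eqref{eq:pasx}, but the Möbius-inversion argument above is the most direct.
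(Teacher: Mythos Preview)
Your argument is correct. The interval factorization $[\hat 0_{n+m},\pi\slashp\sigma]\cong[\hat 0_n,\pi]\times[\hat 0_m,\sigma]$ is exactly the right structural input, and together with the multiplicativity of the M\"obius function on product posets and Lemma~\ref{lem:powersum} the computation goes through as you wrote it.

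Note, however, that the paper does not supply its own proof of this lemma: it is quoted verbatim from \cite[Lemma~4.2~(i)]{BHRZ} and used as a black box. So there is no ``paper's proof'' to compare against here. Your approach is the natural one and is essentially how one would expect the cited result to be established, since the $\bx$-basis is \emph{defined} by~\eqref{eq:xasp} and the $p$-basis is already known to be multiplicative under slash product. The only minor remark is that the paper's setup assumes set partitions are of $[n]$ for $n\geq 1$, so the ``empty partition'' edge case you mention does not actually arise in this context.
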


Also note that given a set partition $\pi\vdash [n]$ and $\delta \in \fS _n$, {since   $\delta\circ p_{\pi}=p_{\delta(\pi)}$ \cite[p 230]{GebSag} we also have} that  
\begin{equation}\label{eq:xdelta}
\delta\circ \bx_{\pi}=\bx_{\delta(\pi)}.
\end{equation}

\begin{lemma}\label{lem:x_induce}
We have $$\bx_{\hat 1 _{n-1}}\uparrow=\sum_{\sigma}\bx_{\sigma}$$ 
summed over all $\sigma\vdash [n]$ with at most two blocks where every block contains $n$ or $n-1$. Further, $\bx_{\pi}\uparrow$ is $\bx$-positive for all $\pi\vdash [n]$ and hence inducing any $\bx$-positive function preserves $\bx$-positivity. 
\end{lemma}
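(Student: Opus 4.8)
The statement has three parts: (1) an explicit expansion of $\bx_{\hat 1_{n-1}}\uparrow$ in the $\bx$-basis; (2) $\bx_\pi\uparrow$ is $\bx$-positive for every $\pi$; (3) the corollary that inducing any $\bx$-positive function preserves $\bx$-positivity. Part (3) is immediate from (2) by linearity, since inducing is defined linearly and any $\bx$-positive $f$ is a nonnegative combination of $\bx_\pi$'s. So the work is in (1) and (2), and I expect (1) to do most of the heavy lifting — once we have a clean formula for $\bx_{\hat 1_{n-1}}\uparrow$, part (2) should follow by reducing a general $\pi$ to a slash-product of full partitions via Lemma~\ref{lem:xprod} and Equation~\eqref{eq:xdelta}.

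\textbf{Proving (1).} The plan is to compute in the $p$-basis, where inducing is well understood via \eqref{eq:induced}: $p_\pi\uparrow = p_{\pi\oplus n}$. Start from the defining relation \eqref{eq:xasp}, $\bx_{\hat 1_{n-1}} = \sum_{\sigma\le \hat 1_{n-1}} \mu_\Pi(\sigma,\hat 1_{n-1})p_\sigma = \sum_{\sigma\vdash[n-1]}\mu_\Pi(\sigma,\hat 1_{n-1})p_\sigma$, apply $\uparrow$ termwise to get $\bx_{\hat 1_{n-1}}\uparrow = \sum_{\sigma\vdash[n-1]}\mu_\Pi(\sigma,\hat 1_{n-1})p_{\sigma\oplus n}$, and then re-expand each $p_{\sigma\oplus n}$ in the $\bx$-basis using \eqref{eq:pasx}. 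One then has to collect, for each target $\tau\vdash[n]$, the total coefficient $\sum_{\sigma:\ \sigma\oplus n \ge \tau}\mu_\Pi(\sigma,\hat 1_{n-1})$, and show this equals $1$ when $\tau$ has at most two blocks each containing $n$ or $n-1$, and $0$ otherwise. The key combinatorial observation is that $\tau \le \sigma\oplus n$ forces strong structure: the condition is essentially that $\tau$ restricted to $[n-1]$ refines $\sigma$ and the block of $\tau$ containing $n$ also contains $n-1$ or is a singleton — so the $\sigma$'s that contribute form an interval $[\,\mathrm{lower},\ \hat 1_{n-1}\,]$ in $\Pi_{n-1}$, and $\sum_{\sigma\in[a,b]}\mu(\sigma,b) = \delta_{a,b}$ by the defining property of the Möbius function. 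One should first establish that the set $\{\sigma : \sigma\oplus n \ge \tau\}$ is either empty or an upper interval in $\Pi_{n-1}$ with top $\hat 1_{n-1}$, determine exactly when it equals the single point $\{\hat 1_{n-1}\}$ (that is when it contributes), and read off that this happens precisely for the advertised $\tau$. Alternatively — and this may be cleaner — expand $p_{\hat 1_n}$ and a second power sum to directly verify the claimed identity; but the Möbius-inversion route above is the most systematic.

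\textbf{Proving (2).} Given the formula from (1), handle a general $\pi\vdash[n]$ by splitting on whether $n$ is alone in $\pi$. Write $\pi = \pi'\slashp\pi''$-style decompositions are not available in general, so instead argue directly: by \eqref{eq:xdelta} we may relabel so that the block of $\pi$ containing $n-1$ is convenient; then use Lemma~\ref{lem:xprod} to peel off, as a slash factor on the right, the block containing $n-1$ together with (if applicable) the structure near $n$. In the case where $n$ lies in the same block as $n-1$, inducing only enlarges that last block, and one checks $\bx_\pi\uparrow$ factors as (a slash product) $\slashp\ (\bx_{\hat 1_k}\uparrow)$ for the appropriate $k$, which is $\bx$-positive by (1) and Lemma~\ref{lem:xprod}. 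In the case where $n$ is not in the last block, inducing produces $\bx_{\pi\oplus n}$ directly on the $p$-side; one must redo the small Möbius computation with the extra "frozen" block structure, but it is the same argument localized to the block of $n-1$, and again yields a nonnegative $\bx$-expansion. The main obstacle I anticipate is bookkeeping in this case analysis — making sure the relabelling and the slash-factorization are applied consistently so that the reduction to the $\hat 1_{n-1}$ case (part (1)) is genuinely valid — rather than any deep difficulty; the Möbius-function interval collapse $\sum_{\sigma\in[a,b]}\mu(\sigma,b)=\delta_{a,b}$ is the one real idea, and it is classical.
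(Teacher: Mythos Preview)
Your approach to part (1) is exactly the paper's: expand $\bx_{\hat 1_{n-1}}$ in the $p$-basis, induce termwise via $p_\sigma\uparrow = p_{\sigma\oplus n}$, re-expand in the $\bx$-basis, and observe that for each target $\pi\vdash[n]$ the contributing $\sigma$'s form the interval $[\tilde\pi,\hat 1_{n-1}]$ in $\Pi_{n-1}$ (where $\tilde\pi$ is $\pi$ with the blocks of $n-1$ and $n$ merged and $n$ deleted), so that $\sum_{\sigma}\mu_\Pi(\sigma,\hat 1_{n-1})$ collapses to $\delta_{\tilde\pi,\hat 1_{n-1}}$. One small correction: this interval is never empty, and your side remark that the condition $\tau\le\sigma\oplus n$ amounts to ``$\tau$ restricted to $[n-1]$ refines $\sigma$ and the block of $\tau$ containing $n$ also contains $n-1$ or is a singleton'' is not quite right --- but you don't actually use that description, and the interval argument you outline is the correct one.

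For part (2) your case split is both unnecessary and muddled (the cases ``$n$ is alone in $\pi$'', ``$n$ lies in the same block as $n-1$'', and ``$n$ is not in the last block'' don't partition anything cleanly, and the claim that ``inducing produces $\bx_{\pi\oplus n}$ directly on the $p$-side'' is false --- only $p_\pi\uparrow = p_{\pi\oplus n}$ holds, not its $\bx$-analogue). The paper's argument is cleaner and uniform: for $\pi\vdash[n-1]$, choose $\delta\in\fS_{n-1}$ fixing $n-1$ with $\delta(\pi) = \hat 1_{k_1}\slashp\cdots\slashp\hat 1_{k_L}$; then, using that $\tilde\delta\circ(f\uparrow)=(\delta\circ f)\uparrow$ when $\tilde\delta\in\fS_n$ extends $\delta$ by fixing $n$, together with Lemma~\ref{lem:xprod} and the monomial-level identity $(fg)\uparrow = f(g\uparrow)$, one gets
\[
\bx_\pi\uparrow \;=\; \tilde\delta^{-1}\circ\Bigl(\bx_{\hat 1_{k_1}}\cdots\bx_{\hat 1_{k_{L-1}}}\bigl(\bx_{\hat 1_{k_L}}\uparrow\bigr)\Bigr),
\]
which is $\bx$-positive by part (1), Lemma~\ref{lem:xprod}, and Equation~\eqref{eq:xdelta}. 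Your sketch gestures at this reduction but never cleanly isolates the two facts that make it work without cases: that inducing commutes with relabellings fixing the last position, and that inducing a product affects only the rightmost factor.
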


\begin{proof}By change of bases formulas in Equations~\eqref{eq:xasp} and~\eqref{eq:pasx} and the formula for inducing in~\eqref{eq:induced} we have 
\begin{align*}
\bx_{\hat 1 _{n-1}}&=\sum_{\sigma{\vdash [n-1]}}\mu_{\Pi}(\sigma,\hat 1_{n-1})p_{\sigma}\\
\Rightarrow\bx_{\hat 1_{n-1}}\uparrow&
=\sum_{\sigma{\vdash [n-1]}}\mu_{\Pi}(\sigma,\hat 1 _{n-1})p_{\sigma}\uparrow
=\sum_{\sigma{\vdash [n-1]}}\mu_{\Pi}(\sigma,\hat 1 _{n-1})p_{\sigma\oplus  n}
=\sum_{\sigma{\vdash [n-1]}}\sum_{\pi\leq \sigma\oplus n} \mu_{\Pi}(\sigma,\hat 1 _{n-1})\bx_{\pi}.
\end{align*}
{Next note} that $\{\sigma\vdash[n-1]:\pi\leq \sigma\oplus  n\}= {\{\sigma\vdash[n-1]:\tilde \pi\leq \sigma \leq \hat{1} _{n-1}\}}$ 
where $\tilde \pi$ is the set partition $\pi$ where the blocks containing $n$ and $n-1$ are merged and $n$ is removed.  {This is  because for a fixed $\pi\vdash[n]$ the minimal set partition $\bar{\pi}\geq \pi$ where $n-1$ and $n$ are in the same block is $\pi$ but the blocks containing $n$ and $n-1$ are merged. Note that $\sigma\oplus  n$ has $n$ and $n-1$ in the same block so because $\pi\leq \sigma\oplus n$ we have $\bar{\pi}\leq \sigma\oplus n$ and removing the $n$ for each set partition gives us $\tilde\pi\leq \sigma$. Conversely, given $\tilde\pi\leq \sigma$ we have $\pi\leq \bar \pi =\tilde\pi\oplus  n\leq \sigma\oplus n$.}
We then have 
$$\bx_{\hat 1_{n-1}}\uparrow=\sum_{\pi{\vdash [n]}}\sum_{\tilde \pi \leq \sigma\leq \hat 1_{n-1}} \mu_{\Pi}(\sigma,\hat 1 _{n-1})\bx_{\pi}.$$ By the definition of M\"obius functions the inner sum is 0 unless $\tilde \pi=\hat 1_{n-1}$, so we have finished proving the equation in this lemma since it is not hard to see all $\pi$ with $\tilde \pi=\hat 1 _{n-1}$ is the set of all $\pi\vdash [n]$ with at most two blocks where every block contains $n$ or $n-1$. 

For the last part, given a set partition $\pi$ and a suitable permutation $\delta$ we know that {$f$ is $\bx$-positive if and only if $\delta\circ f$ is $\bx$-positive} by Equation~\eqref{eq:xdelta}. Also, it is not hard to see that  if given a permutation $\delta \in \fS _{n-1}$ with $\delta(n-1)=n-1$, { $\tilde \delta\in\fS_{n}$ that is $\delta$  with $\tilde \delta(n)=n$}  and $f\in \NCSym$ is of homogeneous degree {$n-1$} then 
${\tilde \delta}\circ (f\uparrow)=(\delta\circ f)\uparrow$. 
Say  particularly that for {$\pi\vdash[n-1]$ that} $\delta\in \fS _{n-1}$ is a permutation where $\delta(\pi)={\hat 1_{k_1}|\hat 1_{k_2} |\cdots|\hat 1_{k_L}}$ and $\delta({n-1})={n-1}$. 
 We can  see that by Equation~\eqref{eq:xdelta} and Lemma~\ref{lem:xprod}
 $${\delta(\bx_{\pi})\uparrow=}\bx_{\delta(\pi)}\uparrow={(\bx_{\hat 1_{k_1}}\bx_{\hat 1_{k_2}}\cdots \bx_{\hat 1_{k_L}})\uparrow=\bx_{\hat 1_{k_1}}\bx_{\hat 1_{k_2}}\cdots \bx_{\hat 1_{k_{L-1}}}(\bx_{\hat 1_{k_L}}\uparrow)}$$ is $\bx$-positive by Lemma~\ref{lem:xprod} and by what we have already shown. To complete the proof note that 
 $$\tilde \delta^{-1}\circ (\delta(\bx_{\pi})\uparrow)=\tilde \delta^{-1}\circ \tilde\delta\circ (\bx_{\pi}\uparrow)=\bx_{\pi}\uparrow.$$\end{proof}

We can now give our desired classification for $\bx$-positivity and $\bx$-negativity.

\begin{theorem}\label{the:xpos}
For a connected graph $G$ with distinct vertex labels in $[n]$ we have $$Y_G=(-1)^{n-1}Z_G$$
where $Z_G$ is $\bx$-positive. Consequently, for  
a graph $G$ with $k$ connected components and distinct vertex labels in $[n]$ we have $$Y_G=(-1)^{n-k}Z_G$$
where $Z_G$ is $\bx$-positive. Therefore $Y_G$ is $\bx$-positive if $n-k$ is even, and is $\bx$-negative if $n-k$ is odd.
\end{theorem}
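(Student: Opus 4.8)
The plan is to prove the connected case first and then derive the general case via the multiplicativity established in Proposition~\ref{prop:graph_mult} and Corollary~\ref{cor:Y_G}. For a connected graph $G$ on $n$ vertices, I would proceed by induction on the number of edges $|E(G)|$. The base case is when $|E(G)| = n-1$, i.e., $G$ is a tree: here Lemma~\ref{lem:trees} already gives $Y_G = (-1)^{n-1}\sum_\sigma \bx_\sigma$ with nonnegative coefficients, so $Z_G$ is $\bx$-positive as required. (The truly minimal case $n=1$ is $Y_G = \bx_1$, also fine.)

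For the inductive step, suppose $G$ is connected with $|E(G)| \geq n$, so $G$ contains a cycle and hence has an edge $\epsilon$ that is not a bridge. Using the Relabelling Proposition (Proposition~\ref{prop:relabeling}) and Equation~\eqref{eq:xdelta}, $\bx$-positivity of $Z_G$ is unchanged under relabelling the vertices, so I may assume $\epsilon$ joins the vertices labelled $n-1$ and $n$. Apply Deletion-Contraction (Proposition~\ref{prop:DeletionContraction}):
$$Y_G = Y_{G-\epsilon} - Y_{G/\epsilon}\uparrow.$$
Now $G-\epsilon$ is connected on $n$ vertices with fewer edges, so by induction $Y_{G-\epsilon} = (-1)^{n-1} Z_{G-\epsilon}$ with $Z_{G-\epsilon}$ being $\bx$-positive. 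And $G/\epsilon$ is connected on $n-1$ vertices with fewer edges, so by induction $Y_{G/\epsilon} = (-1)^{n-2} Z_{G/\epsilon}$ with $Z_{G/\epsilon}$ $\bx$-positive; then $Y_{G/\epsilon}\uparrow = (-1)^{n-2} (Z_{G/\epsilon}\uparrow)$, and by Lemma~\ref{lem:x_induce} inducing preserves $\bx$-positivity, so $Z_{G/\epsilon}\uparrow$ is $\bx$-positive. Substituting,
$$Y_G = (-1)^{n-1} Z_{G-\epsilon} - (-1)^{n-2}(Z_{G/\epsilon}\uparrow) = (-1)^{n-1}\big(Z_{G-\epsilon} + Z_{G/\epsilon}\uparrow\big),$$
and the bracketed sum, call it $Z_G$, is a sum of two $\bx$-positive functions, hence $\bx$-positive. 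This completes the induction for connected graphs.

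For the general case, let $G$ have connected components $G_1, \ldots, G_k$ with $n_i = |V(G_i)|$, so $n = n_1 + \cdots + n_k$. Iterating Corollary~\ref{cor:Y_G} (or Proposition~\ref{prop:graph_mult} together with the Relabelling Proposition) there is a permutation $\delta \in \fS_n$ with $\delta(G) = G_1 \slashp G_2 \slashp \cdots \slashp G_k$, hence $Y_G = \delta^{-1}\circ(Y_{G_1} Y_{G_2}\cdots Y_{G_k})$. By the connected case each $Y_{G_i} = (-1)^{n_i - 1} Z_{G_i}$ with $Z_{G_i}$ $\bx$-positive, so
$$Y_{G_1}\cdots Y_{G_k} = (-1)^{\sum_i (n_i-1)} Z_{G_1}\cdots Z_{G_k} = (-1)^{n-k} Z_{G_1}\cdots Z_{G_k}.$$
By Lemma~\ref{lem:xprod} the product of $\bx$-basis elements $\bx_{\pi}\bx_{\sigma} = \bx_{\pi\slashp\sigma}$ is again a single basis element, so a product of $\bx$-positive functions is $\bx$-positive; and by Equation~\eqref{eq:xdelta} applying $\delta^{-1}\circ(-)$ permutes basis elements among themselves and preserves $\bx$-positivity. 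Thus $Y_G = (-1)^{n-k} Z_G$ with $Z_G := \delta^{-1}\circ(Z_{G_1}\cdots Z_{G_k})$ being $\bx$-positive, giving $Y_G$ $\bx$-positive when $n-k$ is even and $\bx$-negative when $n-k$ is odd.

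The main obstacle is ensuring that in the inductive step one can always find a non-bridge edge placed on the vertices labelled $n-1$ and $n$; this is exactly what the Relabelling Proposition buys us, and it is why one wants the induction to range over all connected graphs rather than a restricted family. A secondary point requiring care is that Deletion-Contraction as stated requires the edge to be between the vertices labelled $n$ and $n-1$, and that $G/\epsilon$ really is connected on $n-1$ vertices with strictly fewer edges (true since contracting an edge of a connected graph keeps it connected and removes at least one edge), so the induction hypothesis genuinely applies to both $G - \epsilon$ and $G/\epsilon$. Everything else is bookkeeping with signs, which the grouping $(-1)^{n-1}$ out front makes transparent.
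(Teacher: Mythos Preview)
Your proof is correct and follows essentially the same approach as the paper: tree base case via Lemma~\ref{lem:trees}, inductive step via a non-bridge edge (relabelled to lie on $n-1,n$) together with Deletion--Contraction and Lemma~\ref{lem:x_induce}, and the disconnected case via Corollary~\ref{cor:Y_G} and Lemma~\ref{lem:xprod}. The only cosmetic difference is that the paper organises the induction as a nested induction on $(n,m)$, whereas you use a single strong induction on $|E(G)|$ over all connected graphs; both are valid since $G-\epsilon$ and $G/\epsilon$ each have strictly fewer edges.
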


\begin{proof}
We will prove this for a connected graph by doing an induction on the number of vertices $n$ and the number of edges $m$. Our base case is when $n=1$ in which case $Y_G=p_1=\bx_1$. 

We now assume our result is true for any connected graph with vertices $0<|V|<n$. We  already  have the result in the case that a connected graph has the minimal number of edges, that is, a tree by Lemma~\ref{lem:trees}, so let $m>n-1$.  We assume the result is true for a connected graph on $n$ vertices with number of edges $n-1< |E|<m$. 

Since we know that $m>n-1$ we know that $G$ has an edge that is not a bridge, that is, an edge $\epsilon$ such that $G-\epsilon$ is connected. Without loss of generality, by the Relabelling Proposition (Proposition~\ref{prop:relabeling}) assume that $\epsilon$ is the edge between vertices $n-1$ and $n$. By Deletion-Contraction (Proposition~\ref{prop:DeletionContraction}) we have 
$$Y_G=Y_{G-\epsilon}-Y_{G/\epsilon}\uparrow.$$
By induction we know that $Y_{G-\epsilon}=(-1)^{n-1}Z_{G-\epsilon}$ where $Z_{G-\epsilon}$ is $\bx$-positive and $Y_{G/\epsilon}=(-1)^{n-2}Z_{G/\epsilon}$ where $Z_{G/\epsilon}$ is $\bx$-positive. By Lemma~\ref{lem:x_induce} we know  {an induced} $\bx$-positive function is $\bx$-positive, so  $Y_{G/\epsilon}\uparrow=(-1)^{n-2}\tilde Z_{G/\epsilon}$ where $\tilde Z_{G/\epsilon}$ is $\bx$-positive.  Hence the result follows for a connected graph.

The result for a graph with $k$ connected components now follows by Corollary~\ref{cor:Y_G}, Lemma~\ref{lem:xprod} and the result for a connected graph.
\end{proof}

Having classified when $Y_G$ is $\bx$-positive and $\bx$-negative, we now address $e$-positivity and $e$-negativity. For this we will need the following three lemmas, the first of which also has an interesting corollary, and the second of which yields a natural multiplicative version of the fundamental theorem of symmetric functions in $\NCSym$.

\begin{lemma}\label{lem:YGise} For a set partition $\pi$ we have
$$Y_{K_\pi}=e_\pi .$$
\end{lemma}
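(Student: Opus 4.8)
The plan is to prove $Y_{K_\pi} = e_\pi$ by first handling the case of a single complete graph $K_n$ and then bootstrapping to arbitrary $K_\pi$ via multiplicativity. For the single-block case, observe that $K_n = K_{12\cdots n}$ has vertices labelled $1, 2, \ldots, n$, all mutually adjacent. A colouring $\kappa$ of $K_n$ is proper precisely when $\kappa$ assigns distinct colours to all $n$ vertices, i.e. $\kappa(v_i) \neq \kappa(v_j)$ for all $i \neq j$. Writing out Definition~\ref{def:YG}, this says
$$Y_{K_n} = \sum_{\kappa} x_{\kappa(v_1)} \cdots x_{\kappa(v_n)} = \sum_{(i_1, \ldots, i_n)} x_{i_1} \cdots x_{i_n}$$
summed over tuples with $i_j \neq i_k$ whenever $j \neq k$. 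Comparing with the definition of $e_\pi$ for the one-block partition $\pi = 12\cdots n = \hat{1}_n$: there $e_{\hat 1_n}$ is summed over tuples with $i_j \neq i_k$ whenever $j, k$ lie in the same block — and since there is only one block, that is every pair $j \neq k$. So $Y_{K_n} = e_{\hat 1_n}$ literally by matching the index sets of the two sums.

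Next I would extend to general $\pi = B_1/B_2/\cdots/B_{\ell(\pi)} \vdash [n]$. The key point is that $K_\pi = K_{|B_1|} \cup \cdots \cup K_{|B_{\ell(\pi)}|}$ is a disjoint union of complete graphs, where the block $B_i$ supplies the vertex labels of $K_{|B_i|}$. I would first reduce to the case where the blocks are intervals in their natural order — that is, where $\pi = \hat 1_{|B_1|} \mid \hat 1_{|B_2|} \mid \cdots \mid \hat 1_{|B_{\ell(\pi)}|}$ — using the Relabelling Proposition (Proposition~\ref{prop:relabeling}) and Equation~\eqref{eq:xdelta}: if $\delta \in \fS_n$ is a permutation with $\delta(\pi') = \pi$ for $\pi'$ the interval partition of the same shape, then $Y_{K_\pi} = \delta \circ Y_{K_{\pi'}}$, while on the $e$-side one needs the analogous identity $\delta \circ e_{\pi'} = e_{\delta(\pi')} = e_\pi$; this last fact either is available from \cite{GebSag} (the relabelling acts on all the standard bases the same way) or can be checked directly from the definition of $e_\pi$ exactly as for $p_\pi$. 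For the interval partition $\pi' = \hat 1_{k_1} \mid \cdots \mid \hat 1_{k_L}$, the graph $K_{\pi'}$ is exactly $K_{k_1} \slashp K_{k_2} \slashp \cdots \slashp K_{k_L}$ by the definition of the slash product of graphs, so Proposition~\ref{prop:graph_mult} gives $Y_{K_{\pi'}} = Y_{K_{k_1}} Y_{K_{k_2}} \cdots Y_{K_{k_L}} = e_{\hat 1_{k_1}} e_{\hat 1_{k_2}} \cdots e_{\hat 1_{k_L}}$ using the single-block case. Finally, the multiplicativity of the elementary symmetric functions in $\NCSym$ under the slash product — $e_{\pi \slashp \sigma} = e_\pi e_\sigma$, which holds since $\rho$ and the slash-product structure are compatible and indeed is the exact analogue of Lemma~\ref{lem:powersum} and Lemma~\ref{lem:xprod} — gives $e_{\hat 1_{k_1}} \cdots e_{\hat 1_{k_L}} = e_{\pi'}$, so $Y_{K_{\pi'}} = e_{\pi'}$, and applying $\delta$ finishes the general case.

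The only genuine obstacle is bookkeeping about which multiplicativity statements for $e_\pi$ in $\NCSym$ the paper has at its disposal: the excerpt has stated $p_{\pi \slashp \sigma} = p_\pi p_\sigma$ (Lemma~\ref{lem:powersum}) and $\bx_{\pi \slashp \sigma} = \bx_\pi \bx_\sigma$ (Lemma~\ref{lem:xprod}) but not explicitly the $e$-version. If that identity is not available to cite, I would instead avoid it entirely: prove $Y_{K_\pi} = e_\pi$ in one stroke by a direct combinatorial argument. A proper colouring $\kappa$ of $K_\pi = \bigcup K_{|B_i|}$ is simply a colouring that is injective on each block $B_i$ (and unconstrained across blocks), so $Y_{K_\pi} = \sum x_{\kappa(v_1)} \cdots x_{\kappa(v_n)}$ ranges over exactly the tuples $(i_1, \ldots, i_n)$ with $i_j \neq i_k$ whenever $j$ and $k$ lie in the same block of $\pi$ — which is precisely the index set defining $e_\pi$. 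This direct route is in fact the cleanest, and I would present it as the main argument, mentioning the multiplicative route only as a remark; the expected main obstacle then shrinks to nothing more than carefully aligning the two summation conditions.
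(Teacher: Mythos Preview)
Your direct combinatorial argument at the end is exactly the paper's proof: the paper simply says ``This follows immediately by comparing the definitions of $Y_{K_\pi}$ and $e_\pi$,'' which is precisely the observation that a proper colouring of $K_\pi$ is a colouring injective on each block, matching the summation condition for $e_\pi$.

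One remark on your multiplicative route: in the paper's logical ordering it would be circular, because Lemma~\ref{lem:eprod} (the identity $e_{\pi\slashp\sigma}=e_\pi e_\sigma$) is proved there \emph{using} Lemma~\ref{lem:YGise}. You correctly anticipated that the $e$-multiplicativity might not be available to cite and fell back to the direct argument, which was the right call.
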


\begin{proof} This follows immediately by comparing the definitions of $Y_{K_\pi}$ and $e_\pi .$
\end{proof}

We now have an elegant relationship between the $e$-basis and the $\bx$-basis.

\begin{corollary}\label{cor:eisxpos} For a set partition $\pi = B_1/B_2/\cdots /B_{\ell(\pi)}\vdash [n]$ we have  $e_\pi $ is $\bx$-positive if $n-\ell(\pi)$ is even and is $\bx$-negative if $n-\ell(\pi)$ is odd.
\end{corollary}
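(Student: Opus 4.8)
The plan is to combine Lemma~\ref{lem:YGise} with Theorem~\ref{the:xpos}. By Lemma~\ref{lem:YGise} we have $Y_{K_\pi} = e_\pi$, so the $\bx$-expansion of $e_\pi$ is exactly the $\bx$-expansion of $Y_{K_\pi}$, and the sign behaviour of the latter is governed by Theorem~\ref{the:xpos}. What remains is simply to identify the relevant graph-theoretic parameters of $G = K_\pi$.

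First I would note that $K_\pi = K_{|B_1|} \cup K_{|B_2|} \cup \cdots \cup K_{|B_{\ell(\pi)}|}$ is a disjoint union of $\ell(\pi)$ complete graphs, each of which is connected (a single vertex being $K_1$, which is connected). Hence $K_\pi$ has exactly $k = \ell(\pi)$ connected components and $n$ vertices. Then Theorem~\ref{the:xpos} applied to $G = K_\pi$ gives $Y_{K_\pi} = (-1)^{n-k} Z_{K_\pi} = (-1)^{n-\ell(\pi)} Z_{K_\pi}$ with $Z_{K_\pi}$ being $\bx$-positive. Substituting $e_\pi = Y_{K_\pi}$ yields $e_\pi = (-1)^{n-\ell(\pi)} Z_{K_\pi}$, so $e_\pi$ is $\bx$-positive when $n - \ell(\pi)$ is even and $\bx$-negative when $n - \ell(\pi)$ is odd, which is exactly the claim.

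There is essentially no obstacle here — the content is entirely contained in the two cited results, and the only thing to check is the bookkeeping that the number of components of $K_\pi$ is $\ell(\pi)$, which is immediate from the definition of $K_\pi$. One might want to spell out that the $\bx$-expansion of $e_\pi$ does not depend on the particular labelling chosen for $K_\pi$ (different block labellings give graphs related by the Relabelling Proposition, which by Equation~\eqref{eq:xdelta} permutes the indexing set partitions but preserves $\bx$-positivity/negativity and the parity $n - \ell(\pi)$), but since the statement is about $e_\pi$ as an element of $\NCSym$ and Lemma~\ref{lem:YGise} already fixes a canonical representative, even this remark is optional. The proof is a one-line deduction.
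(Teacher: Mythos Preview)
Your proposal is correct and follows exactly the paper's approach: the paper's proof is the single sentence ``This follows immediately by Theorem~\ref{the:xpos} and Lemma~\ref{lem:YGise},'' and you have simply unpacked the bookkeeping that $K_\pi$ has $n$ vertices and $\ell(\pi)$ connected components. Your additional remark on labelling is indeed optional, since Lemma~\ref{lem:YGise} already identifies $e_\pi$ with a specific $Y_{K_\pi}$.
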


\begin{proof} This follows immediately by Theorem~\ref{the:xpos} and Lemma~\ref{lem:YGise}.
\end{proof}

For our second lemma we show that the $e$-basis satisfies the property exhibited by the $p$-basis in Lemma~\ref{lem:powersum}, and the $\bx$-basis in Lemma~\ref{lem:xprod}. This was also shown by the first author using M\"obius functions \cite[Lemma 2.1]{Dladders}.

\begin{lemma}\label{lem:eprod}For two set partitions $\pi$ and $\sigma$ we have
$$e_{\pi \slashp \sigma}=e_{\pi}e_{\sigma}.$$
\end{lemma}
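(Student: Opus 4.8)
The statement to prove is $e_{\pi \slashp \sigma} = e_\pi e_\sigma$, the multiplicativity of the elementary basis with respect to the slash product. The cleanest route is to push this through the chromatic function $Y_G$, since all the required machinery is already in place. First I would invoke Lemma~\ref{lem:YGise}, which identifies $e_\pi = Y_{K_\pi}$ and $e_\sigma = Y_{K_\sigma}$. Next I would observe that, by the definition of the slash product on set partitions together with the definition of $K_{(-)}$ as a disjoint union of complete graphs on the blocks, the graph $K_{\pi\slashp\sigma}$ is precisely $K_\pi \slashp K_\sigma$: the blocks of $\pi\slashp\sigma$ are the blocks of $\pi$ (with labels in $[n]$) followed by the blocks of $\sigma$ shifted up by $n$, and $K_\pi\slashp K_\sigma$ is the disjoint union with exactly that relabelling. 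Hence $e_{\pi\slashp\sigma} = Y_{K_{\pi\slashp\sigma}} = Y_{K_\pi \slashp K_\sigma}$.

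**Finishing.** Now apply Proposition~\ref{prop:graph_mult} (multiplicativity of $Y_G$ under the graph slash product) to get $Y_{K_\pi\slashp K_\sigma} = Y_{K_\pi}Y_{K_\sigma}$, and then Lemma~\ref{lem:YGise} once more to rewrite the right side as $e_\pi e_\sigma$. Chaining these equalities gives $e_{\pi\slashp\sigma} = e_\pi e_\sigma$, as desired. The whole argument is a three-step substitution: $e \to Y_K$, multiplicativity of $Y$, $Y_K \to e$.

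**Alternative / main obstacle.** If one preferred to avoid the graph-theoretic detour, one could argue directly from the definition of $e_\pi$ as a sum over tuples $(i_1,\dots,i_n)$ with $i_j \neq i_k$ whenever $j,k$ lie in a common block of $\pi$: a tuple of length $n+m$ satisfies the constraint for $\pi\slashp\sigma$ if and only if its first $n$ entries satisfy the constraint for $\pi$ and its last $m$ entries satisfy the constraint for $\sigma$ (the slash product imposes no constraints linking the two ranges), so the monomials factor and the sum splits as a product. Either way the content is genuinely routine; the only point requiring a line of care is the verification that the slash product on partitions corresponds exactly to the slash product on the graphs $K_\pi$ — i.e., that no spurious edges appear between the two parts — which is immediate from the disjoint-union definition of $K_\pi$. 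I expect no real obstacle here; the proof is a short chain of already-established identities.
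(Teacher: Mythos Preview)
Your proposal is correct and matches the paper's proof essentially verbatim: the paper also notes $K_{\pi\slashp\sigma}=K_\pi\slashp K_\sigma$ and then chains Lemma~\ref{lem:YGise}, Proposition~\ref{prop:graph_mult}, and Lemma~\ref{lem:YGise} again to obtain $e_{\pi\slashp\sigma}=Y_{K_{\pi\slashp\sigma}}=Y_{K_\pi\slashp K_\sigma}=Y_{K_\pi}Y_{K_\sigma}=e_\pi e_\sigma$. Your alternative direct-from-definition argument is also valid but is not the route taken in the paper.
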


\begin{proof} First note that
\begin{equation}\label{eq:kprod}
K_\pi\slashp K_\sigma = K_{\pi \slashp \sigma}. \end{equation} Consequently by Lemma~\ref{lem:YGise} and Proposition~\ref{prop:graph_mult} we have
$$e_{\pi \slashp \sigma}=Y_{K_{\pi\slashp \sigma}}= Y_{K_{\pi}\slashp K_{\sigma}}= Y_{K_{\pi} }Y_{K_{\sigma}}=e_{\pi}e_{\sigma}.$$\end{proof}

We now use this second lemma to establish for $\NCSym$ the version of the fundamental theorem of symmetric functions that states that the elementary symmetric functions $\{e_i \suchthat i\geq 1\}$ in $\Sym$ are algebraically independent over $\bQ$ and generate $\Sym$.

\begin{theorem}\label{the:fundamentaltheorem} The elementary symmetric functions $\{e_\alpha \suchthat \alpha \mbox{ is atomic}\}$  in $\NCSym$  are algebraically independent over $\bQ$ and freely generate $\NCSym$.
\end{theorem}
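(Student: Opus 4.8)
The plan is to use the slash-product multiplicativity of the $e$-basis (Lemma~\ref{lem:eprod}) to recognize $\NCSym$ as a free associative algebra on the atomic elementary functions. First I would recall the structural fact that every set partition $\pi$ has a \emph{unique} atomic decomposition $\pi = \alpha_1 \slashp \alpha_2 \slashp \cdots \slashp \alpha_k$ into non-empty atomic set partitions, as stated in Section~\ref{sec:background}. By iterating Lemma~\ref{lem:eprod} this gives $e_\pi = e_{\alpha_1} e_{\alpha_2} \cdots e_{\alpha_k}$. Thus every basis element $e_\pi$ of $\NCSym$ is a (noncommutative) monomial in the set $\{e_\alpha : \alpha \text{ atomic}\}$, and since $\{e_\pi : \pi \vdash [n], n \geq 1\}$ together with $1$ is a basis of $\NCSym$, the set of such monomials $e_{\alpha_1}\cdots e_{\alpha_k}$ is a $\bQ$-linear basis of $\NCSym$.

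The key point is then that this correspondence is a \emph{bijection} between finite sequences $(\alpha_1,\ldots,\alpha_k)$ of atomic set partitions and set partitions $\pi$ (via $\pi \mapsto$ its atomic decomposition), with the empty sequence corresponding to $1$. So the monomials in the $e_\alpha$'s that arise are exactly indexed by words in the alphabet $\{e_\alpha : \alpha \text{ atomic}\}$, each word occurring exactly once, and they form a $\bQ$-basis of $\NCSym$. That is precisely the statement that $\NCSym$ is the free associative $\bQ$-algebra on the set $\{e_\alpha : \alpha \text{ atomic}\}$: a free associative algebra on a set $X$ has the words in $X$ as a basis, and algebraic independence of $X$ over $\bQ$ together with $X$ generating the algebra is equivalent to the words in $X$ forming a basis. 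I would spell this out: generation is immediate since the $e_\pi$ span and each $e_\pi$ is a word in the $e_\alpha$; algebraic independence amounts to saying no nontrivial $\bQ$-linear combination of distinct words vanishes, which is exactly linear independence of the $e_\pi$ (translated through the atomic-decomposition bijection).

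I would organize the write-up as: (1) cite the unique atomic decomposition; (2) apply Lemma~\ref{lem:eprod} inductively to get $e_\pi = e_{\alpha_1}\cdots e_{\alpha_k}$; (3) observe the resulting map from words in $\{e_\alpha\}$ to $\{e_\pi\} \cup \{1\}$ is a bijection; (4) conclude that since $\{e_\pi\} \cup \{1\}$ is a $\bQ$-basis, the words in $\{e_\alpha : \alpha \text{ atomic}\}$ form a $\bQ$-basis, which is the defining property of the free associative algebra, hence the $e_\alpha$ are algebraically independent over $\bQ$ and freely generate $\NCSym$.

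The only real obstacle is conceptual bookkeeping rather than computation: one must be careful that ``algebraically independent and freely generate'' in the noncommutative setting means exactly ``the monomials/words form a basis,'' not the commutative notion, and that the atomic decomposition being \emph{unique} is what prevents two different words from giving the same $e_\pi$ (this uniqueness is the crux — without it, distinct words could collapse and independence would fail). Everything else follows formally from Lemma~\ref{lem:eprod} and the fact that $\{e_\pi\}$ is a basis.
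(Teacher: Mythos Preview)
Your proposal is correct and follows essentially the same approach as the paper: both use Lemma~\ref{lem:eprod} to factor each basis element $e_\pi$ as the product $e_{\alpha_1}\cdots e_{\alpha_k}$ along the unique atomic decomposition of $\pi$, and then conclude from the fact that $\{e_\pi\}$ is a linear basis that the atomic $e_\alpha$ are algebraically independent free generators. Your write-up is in fact more explicit than the paper's (which compresses the bijection-between-words-and-partitions step into a single sentence), but the underlying argument is identical.
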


\begin{proof}
Since $\{ e_\pi\suchthat\pi\vdash [n], n\geq 1 \}$ is a basis for $\NCSym$  they are all linearly independent. They are also multiplicative along atomic decompositions since Lemma~\ref{lem:eprod} shows that for $\pi=\alpha_1\slashp \alpha_2 \slashp \cdots\slashp \alpha_k$, written  as its unique atomic decomposition, we have 
$$e_{\pi}=\prod_{i=1}^k e_{{\alpha_i}}.$$
Thus,  all $\{e_\alpha \suchthat \alpha \mbox{ is atomic}\}$ are algebraically independent over $\bQ$ and freely generate $\NCSym$.
\end{proof}

For our third lemma recall that if some function $f$ in a vector space is expressed in terms of a basis $\{ b_i \}$ then $[b_i]f$ denotes the coefficient of basis element $b_i$ in $f$. We will also need the following change of basis formula in $\NCSym$ \cite[Theorem 3.4]{RS} that says that for a set partition $\pi \vdash [n]$
\begin{equation}p_{\pi}=\frac{1}{\mupi(\minel, \pi)}\sum_{\sigma\leq \pi}\mupi(\sigma,\pi)e_{\sigma}.
\label{eq:p_to_e}
\end{equation}

\begin{lemma}\label{lem:coeff}
Given a set partition $\pi ={B_1/B_2}\vdash [n]$ with two blocks and a graph $G$ with distinct vertex labels in $[n]$,  we have
$$[e_{\maxel}]Y_G=\frac{1}{(n-1)!}|[p_{\maxel}]Y_G|$$
and 
$$[e_{B_1/B_2}]Y_G=-\frac{1}{(n-1)!}|[p_{\maxel}]Y_G|+\frac{(-1)^n}{(|B_1|-1)!(|B_2|-1)!}\sum_{S\subseteq E, \pi(S)=B_1/B_2}(-1)^{|S|}.$$
\end{lemma}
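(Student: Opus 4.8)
The plan is to expand $Y_G$ in the $p$-basis of $\NCSym$ using Proposition~\ref{prop:gebhard_sagan}, convert each $p_{\pi(S)}$ into the $e$-basis via the change-of-basis formula~\eqref{eq:p_to_e}, and then isolate the coefficients of $e_{\maxel}$ and $e_{B_1/B_2}$. The key observation is that among all $p_\sigma$ appearing in $Y_G=\sum_{S\subseteq E}(-1)^{|S|}p_{\pi(S)}$, the formula~\eqref{eq:p_to_e} expresses $p_\sigma$ only in terms of $e_\tau$ with $\tau\leq\sigma$; hence $e_{\maxel}$ can only be produced by the term $\sigma=\maxel$ (equivalently $\pi(S)=\maxel$, i.e.\ $S$ spans a connected spanning subgraph of $G$), and $e_{B_1/B_2}$ can only be produced by $\sigma\in\{\maxel,\,B_1/B_2\}$. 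So the whole computation reduces to a bookkeeping exercise with at most two terms of~\eqref{eq:p_to_e}.

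First I would record the relevant M\"obius values in $\Pi_n$: by~\eqref{eq:0to1}, $\mupi(\minel,\maxel)=(-1)^{n-1}(n-1)!$, so the leading coefficient of~\eqref{eq:p_to_e} for $\pi=\maxel$ is $\tfrac{1}{(n-1)!}(-1)^{n-1}$, and the $\sigma=\maxel$ term there contributes $\mupi(\maxel,\maxel)=1$, giving $[e_{\maxel}]p_{\maxel}=\tfrac{(-1)^{n-1}}{(n-1)!}$. For $\pi=\maxel$ and $\sigma=B_1/B_2$ (an interval of length one in $\Pi_n$), $\mupi(B_1/B_2,\maxel)=-1$, so $[e_{B_1/B_2}]p_{\maxel}=\tfrac{(-1)^{n-1}}{(n-1)!}\cdot(-1)=\tfrac{(-1)^{n}}{(n-1)!}$. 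For $\pi=B_1/B_2$ itself, the leading coefficient of~\eqref{eq:p_to_e} is $\tfrac{1}{\mupi(\minel,B_1/B_2)}$, which by~\eqref{eq:0topi} equals $\tfrac{1}{(-1)^{|B_1|-1}(|B_1|-1)!(-1)^{|B_2|-1}(|B_2|-1)!}=\tfrac{(-1)^{n}}{(|B_1|-1)!(|B_2|-1)!}$ since $|B_1|+|B_2|=n$, and the $\sigma=B_1/B_2$ term contributes $\mupi(B_1/B_2,B_1/B_2)=1$, so $[e_{B_1/B_2}]p_{B_1/B_2}=\tfrac{(-1)^n}{(|B_1|-1)!(|B_2|-1)!}$.

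Next I would assemble. Since $[e_{\maxel}]p_{\pi(S)}\neq 0$ only when $\pi(S)=\maxel$, we get $[e_{\maxel}]Y_G=\sum_{S:\pi(S)=\maxel}(-1)^{|S|}\cdot\tfrac{(-1)^{n-1}}{(n-1)!}$, while applying the same reasoning in the $p$-basis gives $[p_{\maxel}]Y_G=\sum_{S:\pi(S)=\maxel}(-1)^{|S|}$; combined with the sign-determined result of Theorem~\ref{the:xpos} (or directly the known sign of $\mul(\minel,\maxel)$ in Theorem~\ref{the:MobiusStan}, which forces $[p_{\maxel}]Y_G$ to have sign $(-1)^{n-1}$), one concludes $[e_{\maxel}]Y_G=\tfrac{(-1)^{n-1}}{(n-1)!}[p_{\maxel}]Y_G=\tfrac{1}{(n-1)!}\bigl|[p_{\maxel}]Y_G\bigr|$. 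For the second identity, only $\pi(S)\in\{\maxel,B_1/B_2\}$ contribute to $[e_{B_1/B_2}]Y_G$, so
\[
[e_{B_1/B_2}]Y_G=\Bigl(\sum_{S:\pi(S)=\maxel}(-1)^{|S|}\Bigr)\frac{(-1)^{n}}{(n-1)!}+\Bigl(\sum_{S:\pi(S)=B_1/B_2}(-1)^{|S|}\Bigr)\frac{(-1)^{n}}{(|B_1|-1)!(|B_2|-1)!}.
\]
Substituting $\sum_{S:\pi(S)=\maxel}(-1)^{|S|}=[p_{\maxel}]Y_G$ and using $(-1)^{n}[p_{\maxel}]Y_G=-(-1)^{n-1}[p_{\maxel}]Y_G=-\bigl|[p_{\maxel}]Y_G\bigr|$ yields exactly the claimed formula.

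\textbf{Main obstacle.} The only genuinely delicate point is pinning down the sign of $[p_{\maxel}]Y_G$ so that $(-1)^{n-1}[p_{\maxel}]Y_G=\bigl|[p_{\maxel}]Y_G\bigr|$; everything else is the two M\"obius computations above plus routine bookkeeping. I would handle the sign by noting that, when $G$ is connected, $[p_{\maxel}]Y_G=\mul(\minel,\maxel)$ by Theorem~\ref{the:MobiusStan}, whose sign is $(-1)^{n-1}$ by the standard fact (cited there as \cite[Equation (1)]{Stan95}) that a geometric lattice's M\"obius function alternates in sign with rank; and when $G$ is disconnected, no $S$ has $\pi(S)=\maxel$, so $[p_{\maxel}]Y_G=0$ and both formulas hold trivially (the second reducing to the stated sum over $S$ with $\pi(S)=B_1/B_2$). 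One should also double-check the trivial edge cases where $E$ contains no $S$ with the required $\pi(S)$, in which the corresponding sums are empty and the identities still read correctly.
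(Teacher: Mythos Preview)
Your proposal is correct and follows essentially the same route as the paper: expand $Y_G$ in the $p$-basis via Proposition~\ref{prop:gebhard_sagan}, substitute the change-of-basis formula~\eqref{eq:p_to_e}, observe that only $\pi(S)\in\{\maxel,B_1/B_2\}$ can contribute to the two target coefficients, plug in the M\"obius values from~\eqref{eq:0to1} and~\eqref{eq:0topi}, and then use Theorem~\ref{the:MobiusStan} (together with Proposition~\ref{prop:gebhard_sagan}) to fix the sign of $[p_{\maxel}]Y_G$. The paper does not separately invoke Theorem~\ref{the:xpos} for the sign, nor does it single out the disconnected case, but those are harmless additions on your part rather than a different argument.
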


\begin{proof}
Using Proposition~\ref{prop:gebhard_sagan} and Equation~\eqref{eq:p_to_e} we have 
\begin{align*}
Y_G&=\sum_{S\subseteq E(G)}\sum_{\sigma\leq \pi(S)}(-1)^{|S|}\frac{\mupi(\sigma,\pi(S))}{\mupi(\minel, \pi(S))}e_{\sigma}\\
&=\sum_{\sigma}\sum_{S\subseteq E(G), \sigma\leq \pi(S)}(-1)^{|S|}\frac{\mupi(\sigma,\pi(S))}{\mupi(\minel, \pi(S))}e_{\sigma}.
\end{align*}
This means that by Equation~\eqref{eq:0to1}
\begin{align*}
[e_{\maxel}]Y_G&=\sum_{S\subseteq E(G), \maxel = \pi(S)}(-1)^{|S|}\frac{\mupi(\maxel,\pi(S))}{\mupi(\minel, \pi(S))}\\
&=\frac{1}{(-1)^{n-1}(n-1)!}\sum_{S\subseteq E(G), \maxel = \pi(S)}(-1)^{|S|}.
\end{align*}
Note that the summation above is equal to  $[p_{\maxel}]Y_G$, which has sign $(-1)^{n-1}$ by Theorem~\ref{the:MobiusStan} and Proposition~\ref{prop:gebhard_sagan}. This completes the proof for the coefficient $[e_{\maxel}]Y_G$. 

For a set partition $B_1/B_2$ with two blocks we have by Equations~\eqref{eq:0to1} and \eqref{eq:0topi}
\begin{align*}
[e_{B_1/B_2}]Y_G&=\sum_{S\subseteq E(G), B_1/B_2 \leq \pi(S)}(-1)^{|S|}\frac{\mupi(B_1/B_2,\pi(S))}{\mupi(\minel, \pi(S))}\\
&=\sum_{S\subseteq E(G), \maxel= \pi(S)}(-1)^{|S|}\frac{\mupi(B_1/B_2,\pi(S))}{\mupi(\minel, \pi(S))}\\
&+\sum_{S\subseteq E(G), B_1/B_2 = \pi(S)}(-1)^{|S|}\frac{\mupi(B_1/B_2,\pi(S))}{\mupi(\minel, \pi(S))}\\
&=\frac{-1}{(-1)^{n-1}(n-1)!}\sum_{S\subseteq E(G), \maxel= \pi(S)}(-1)^{|S|}\\
&+\frac{1}{(-1)^{n-2}(|B_1|-1)!(|B_2|-1)!}\sum_{S\subseteq E(G), B_1/B_2 = \pi(S)}(-1)^{|S|}.
\end{align*}
Again note that the first summation in the last equality above is equal to $[p_{\maxel}]Y_G$, which has sign $(-1)^{n-1}$ by Theorem~\ref{the:MobiusStan} and Proposition~\ref{prop:gebhard_sagan}. This completes the proof for the coefficient $[e_{B_1/B_2}]Y_G$. 
\end{proof}

We are now ready to classify when $Y_G$ is $e$-positive or $e$-negative.

\begin{theorem} \label{the:epos}
Given a graph $G$ with distinct vertex labels in $[n]$, $Y_G$ is $e$-positive if and only if $G$ is a disjoint {union} of complete graphs. $Y_G$ is never $e$-negative.
\end{theorem}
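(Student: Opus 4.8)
The plan is to prove both directions of the equivalence and the ``never $e$-negative'' assertion, leaning on the structural results already established, especially Lemma~\ref{lem:YGise} ($Y_{K_\pi}=e_\pi$), Lemma~\ref{lem:eprod} (multiplicativity of the $e$-basis), Proposition~\ref{prop:graph_mult}, Corollary~\ref{cor:Y_G}, and the coefficient computations in Lemma~\ref{lem:coeff}. The easy direction is that a disjoint union of complete graphs $G=K_{|B_1|}\cup\cdots\cup K_{|B_\ell|}$ has $Y_G$ a relabelling (via Corollary~\ref{cor:Y_G}) of $e_{|B_1|}\cdots e_{|B_\ell|}=e_{\hat1_{|B_1|}\slashp\cdots\slashp\hat1_{|B_\ell|}}$ by Lemmas~\ref{lem:YGise} and~\ref{lem:eprod}; since relabelling sends one monomial $e$-function to another (the analogue of Equation~\eqref{eq:xdelta} holds for $e$ since $\delta\circ p_\pi=p_{\delta(\pi)}$ and the change of basis in~\eqref{eq:p_to_e} is relabelling-equivariant), $Y_G$ is a single $e$-basis element, hence $e$-positive.

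For the converse, suppose $G$ is \emph{not} a disjoint union of complete graphs; we must exhibit a negative $e$-coefficient. First reduce to the connected case: by Corollary~\ref{cor:Y_G} and Lemma~\ref{lem:eprod}, $Y_G$ is a relabelling of $\prod_i Y_{G_i}$ over connected components $G_i$; if some $G_i$ is not complete and its $Y_{G_i}$ has a negative $e$-coefficient, then expanding the product in the $e$-basis (each factor $e$-expanded, products of $e_\sigma$'s being again $e$-basis elements by Lemma~\ref{lem:eprod}) shows the product — and hence its relabelling $Y_G$ — is not $e$-positive, provided no cancellation occurs; I would argue no cancellation by a leading-term/support argument, e.g.\ tracking the coefficient of $e_{\hat1_n}$ or a carefully chosen $e_\sigma$ using the explicit formulas of Lemma~\ref{lem:coeff}. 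So assume $G$ connected but not complete, hence $n\geq 3$ and $G$ has at least one non-edge. The natural target is the two-block coefficient $[e_{B_1/B_2}]Y_G$ from Lemma~\ref{lem:coeff}: choosing $B_1/B_2$ so that there are \emph{no} spanning-forest-type edge sets $S$ with $\pi(S)=B_1/B_2$ (e.g.\ pick $B_1=\{v\}$ a single vertex that is not adjacent to all others, so the second sum $\sum_{S:\pi(S)=B_1/B_2}(-1)^{|S|}$ is empty and equals $0$), the formula collapses to $[e_{B_1/B_2}]Y_G=-\frac{1}{(n-1)!}|[p_{\hat1_n}]Y_G|$, which is strictly negative because $[p_{\hat1_n}]Y_G\neq 0$ — indeed $|[p_{\hat1_n}]Y_G|=|\mu_L(\hat0_n,\hat1_n)|>0$ by Theorem~\ref{the:MobiusStan} since $G$ is connected so $\hat1_n\in L_G$.

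That same computation simultaneously handles ``never $e$-negative'': whenever $G$ is connected with $n\geq 2$, Lemma~\ref{lem:coeff} gives $[e_{\hat1_n}]Y_G=\frac{1}{(n-1)!}|[p_{\hat1_n}]Y_G|>0$, so $Y_G$ has a strictly positive $e$-coefficient and cannot be $e$-negative; for general $G$, pass to a connected component (or note $Y_G$ is a relabelled product whose $e$-expansion, again by Lemma~\ref{lem:eprod}, inherits a positive coefficient), and the $n=1$ case is $Y_G=e_1$. The main obstacle I anticipate is the no-cancellation step in reducing the converse from general $G$ to connected $G$: one must be sure that a negative coefficient in a single component's $Y_{G_i}$ is not killed when multiplied against the other $Y_{G_j}$ and expanded — I would resolve this by choosing, within the offending component, the specific two-block partition above and checking that the corresponding $e_\sigma$ in the full product receives a contribution only from that term (using that products of distinct-support $e$-basis elements under $\slashp$ are distinct $e$-basis elements, by uniqueness of atomic decomposition as in Theorem~\ref{the:fundamentaltheorem}), so its coefficient is exactly the product of the relevant component coefficients, hence nonzero and of the same sign.
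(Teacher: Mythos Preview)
Your overall strategy matches the paper's almost exactly: use Lemma~\ref{lem:coeff} to show $[e_{\hat 1_n}]Y_G>0$ for connected $G$ (ruling out $e$-negativity), and for a connected non-complete $G$ choose a two-block partition $B_1/B_2$ with no $S\subseteq E(G)$ satisfying $\pi(S)=B_1/B_2$, so that Lemma~\ref{lem:coeff} gives a strictly negative coefficient; then lift to disconnected $G$ via Corollary~\ref{cor:Y_G} and Lemma~\ref{lem:eprod}, with the no-cancellation argument you sketch (distinct tuples $(\sigma_1,\ldots,\sigma_k)$ give distinct $\sigma_1\slashp\cdots\slashp\sigma_k$, so the product $e$-expansion has no collisions). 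All of that is fine.

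The one genuine gap is your choice of $B_1/B_2$. Taking $B_1=\{v\}$ for a vertex $v$ of degree less than $n-1$ does \emph{not} force the sum $\sum_{S:\pi(S)=B_1/B_2}(-1)^{|S|}$ to vanish: you would need the induced subgraph on $[n]\setminus\{v\}$ to be disconnected, which is not implied by $v$ missing a neighbour. For the path $1$--$2$--$3$ with $v=1$, the set $S=\{(2,3)\}$ gives $\pi(S)=1/23$, the sum equals $-1$, and one computes $[e_{1/23}]Y_{P_3}=\tfrac12>0$, so your candidate coefficient is not negative. The fix (and this is what the paper does) is to take $B_1=\{u,v\}$ for a \emph{non-adjacent} pair $u,v$: then the induced subgraph on $B_1$ has no edge, so $B_1/B_2$ is never a connected partition of $G$ and no $S$ can have $\pi(S)=B_1/B_2$; the second sum is genuinely empty and $[e_{B_1/B_2}]Y_G=-\tfrac{1}{(n-1)!}|[p_{\hat1_n}]Y_G|<0$ as desired. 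With that correction your argument goes through and coincides with the paper's.
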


\begin{proof}By Lemma~\ref{lem:YGise} we know that $Y_{K_{\maxel}}=e_{\maxel}$, which is certainly $e$-positive. We now show   that if $G$ is connected and not a complete graph then $Y_G$ has both a positive and negative term in the elementary basis.

First by combining Theorem~\ref{the:MobiusStan}  and Lemma~\ref{lem:coeff} we know that $[e_{\maxel}]Y_G$ is nonzero and furthermore $[e_{\maxel}]Y_G>0$ for all graphs $G$ with $n$ vertices.

Now assume that $G$ is connected and  has two vertices $u$ and $v$ with no edge between them. Consider the set partition $\pi=B_1/B_2$ where $B_1=\{u,v\}$ and $B_2=[n]\setminus B_1$. Because $G$ lacks an edge from $u$ to $v$ we know that $\pi$ is not a connected partition of $G$ and there does not exist a $S\subseteq E(G)$ such that $\pi(S)=\pi$. Using Theorem~\ref{the:MobiusStan} and Lemma~\ref{lem:coeff} we  see that $[e_{\pi}]Y_G$ is nonzero and
$$[e_{\pi}]Y_G=-\frac{1}{(n-1)!}|[p_{\maxel}]Y_G|<0.$$

Thus, by Proposition~\ref{prop:graph_mult}, Corollary~\ref{cor:Y_G} and Lemma~\ref{lem:eprod}, the only $e$-positive graphs are disjoint unions of complete graphs. Proposition~\ref{prop:graph_mult}, Corollary~\ref{cor:Y_G} and Lemma~\ref{lem:eprod},  also show that there are no $e$-negative graphs.
\end{proof}

Note that the argument in the third paragraph of the above proof holds more generally for a connected graph $G$ and two-block set partition $\pi=B_1/B_2$ that is not a connected partition of $G$. This observation yields the following simple way to locate some negative coefficients.

\begin{corollary}\label{cor:2Bnoe}For a connected graph $G$, if the two-block set partition $\pi=B_1/B_2$ is not connected  then $[e_\pi]Y_G<0$.  \end{corollary}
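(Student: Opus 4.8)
The plan is to deduce this as an immediate corollary of the analysis already carried out in the proof of Theorem~\ref{the:epos}, simply isolating the relevant paragraph. The key observation is that the third paragraph of that proof never uses the hypothesis that $G$ is \emph{not} a complete graph; it only uses that $G$ is connected and that $\pi = B_1/B_2$ is a two-block set partition which is not a connected partition of $G$. So the first step is to recall that, by Lemma~\ref{lem:coeff}, for any connected graph $G$ with distinct vertex labels in $[n]$ and any two-block set partition $\pi = B_1/B_2 \vdash [n]$,
$$[e_{B_1/B_2}]Y_G = -\frac{1}{(n-1)!}\bigl|[p_{\maxel}]Y_G\bigr| + \frac{(-1)^n}{(|B_1|-1)!(|B_2|-1)!}\sum_{S\subseteq E(G),\, \pi(S)=B_1/B_2}(-1)^{|S|}.$$

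Next I would observe that since $\pi = B_1/B_2$ is not a connected partition of $G$, there is no $S\subseteq E(G)$ with $\pi(S) = B_1/B_2$: indeed, if $\pi(S) = B_1/B_2$ then the subgraph of $G$ on the edges of $S$ would have the vertices of $B_1$ forming a connected component, forcing $B_1$ (and likewise $B_2$) to induce a connected subgraph of $G$, contradicting that $\pi$ is not connected. Hence the second sum in the displayed formula is empty and equals $0$, leaving
$$[e_{B_1/B_2}]Y_G = -\frac{1}{(n-1)!}\bigl|[p_{\maxel}]Y_G\bigr|.$$

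Finally I would invoke Theorem~\ref{the:MobiusStan} together with Proposition~\ref{prop:gebhard_sagan}: since $G$ is connected, $\maxel \in L_G$ and $\mu_L(\minel,\maxel) \neq 0$, so $[p_{\maxel}]Y_G = \mu_L(\minel,\maxel) \neq 0$, hence $\bigl|[p_{\maxel}]Y_G\bigr| > 0$ and therefore $[e_{B_1/B_2}]Y_G < 0$, as claimed. There is essentially no obstacle here — the entire content has already been established inside the proof of Theorem~\ref{the:epos}, and this corollary is just the statement that the bound obtained there applies verbatim to \emph{every} disconnected two-block partition, not only to the particular choice $B_1 = \{u,v\}$ used to rule out non-complete graphs. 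The only thing worth double-checking is the nonvanishing of $[p_{\maxel}]Y_G$, which is exactly the content $\mu_L(\minel,\pi)\neq 0$ from Theorem~\ref{the:MobiusStan} applied at $\pi = \maxel$.
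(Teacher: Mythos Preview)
Your proposal is correct and follows exactly the approach the paper intends: the corollary is stated immediately after Theorem~\ref{the:epos} with the remark that the third paragraph of that proof applies verbatim to any two-block $\pi$ that is not a connected partition of $G$, and you have simply spelled out that paragraph in full (Lemma~\ref{lem:coeff} plus the vanishing of the second sum, plus Theorem~\ref{the:MobiusStan} for the nonvanishing of $[p_{\maxel}]Y_G$). There is nothing to add or correct.
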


\section{Chromatic bases for symmetric functions in $\NCSym$}\label{sec:NCSymbases} In this section we discover a multitude of new bases for $\NCSym$ arising from chromatic symmetric functions in noncommuting variables, and establish that with the exception of  $e_\pi$ for all $\pi\vdash [n], n\geq 1$ and {$\bx _\pi$} for all $\pi\vdash [n], n\geq 1$ with blocks of size 1 and 2, none of the known functions in $\NCSym$ can be realized as $Y_G$ for some graph $G$.

For a graph $G$, define the set partition $\pi(G)$ to be the set partition such that each block corresponds to the vertices in a connected component of $G$. Now for each atomic set partition $\alpha \vdash [n]$ choose a graph $G_\alpha$ with $n$ vertices such that $\pi (G_\alpha)=\alpha$. From this, given a generic $\pi\vdash [n]$ written as its unique atomic decomposition $\pi = \alpha _1 \slashp  \alpha _2 \slashp \cdots \slashp \alpha _k$ define the graph
$$G_\pi = G_{\alpha _1} \slashp  G_{\alpha _2} \slashp \cdots \slashp G_{\alpha _k}.$$

\begin{example} \label{ex:Gpi} Consider the following.

\begin{center}
\begin{tikzpicture}
\coordinate (A) at (0,0);
\filldraw (A) circle (7pt);
\filldraw[white] (A) circle [radius=6pt] node {\textcolor{black}{$1$}};
\draw (-1,0) node {\textcolor{black}{$G_1=$}};
\end{tikzpicture} 
\hspace{.1in}
\begin{tikzpicture}
\coordinate (A) at (0,0);
\coordinate (B) at (1,0);
\coordinate (C) at (1,1);
\coordinate (D) at (0,1);
\draw[thick] (A)--(B)--(C)--(A);
\filldraw (A) circle (7pt);
\filldraw (B) circle (7pt);
\filldraw (C) circle (7pt);
\filldraw (D) circle (7pt);
\filldraw[white] (A) circle [radius=6pt] node {\textcolor{black}{$1$}};
\filldraw[white] (B) circle [radius=6pt] node {\textcolor{black}{$2$}};
\filldraw[white] (C) circle [radius=6pt] node {\textcolor{black}{$4$}};
\filldraw[white] (D) circle [radius=6pt] node {\textcolor{black}{$3$}};
\draw (-1,.5) node {\textcolor{black}{$G_{124/3}=$}};
\end{tikzpicture} 
\hspace{.1in}
\begin{tikzpicture}
\coordinate (A) at (0,0);
\coordinate (B) at (1,0);
\coordinate (C) at (1,1);
\coordinate (D) at (0,1);
\draw[thick] (A)--(B)--(C);
\filldraw (A) circle (7pt);
\filldraw (B) circle (7pt);
\filldraw (C) circle (7pt);
\filldraw (D) circle (7pt);
\filldraw[white] (A) circle [radius=6pt] node {\textcolor{black}{$1$}};
\filldraw[white] (B) circle [radius=6pt] node {\textcolor{black}{$3$}};
\filldraw[white] (C) circle [radius=6pt] node {\textcolor{black}{$4$}};
\filldraw[white] (D) circle [radius=6pt] node {\textcolor{black}{$2$}};
\draw (-1,.5) node {\textcolor{black}{$G_{134/2}=$}};
\end{tikzpicture} 
\end{center}

For the above note that $\pi(G_1) =1$, $\pi (G_{124/3})= 124/3$ and $\pi(G_{134/2})=134/2$. Given these graph choices for atomic partitions $1$, $124/3$, $134/2$ we get the following graph associated to $134/2/5/679/8 = 134/2\slashp 1 \slashp 124/3$. 
\begin{center}
\begin{tikzpicture}
\coordinate (A) at (0,0);
\coordinate (B) at (1,0);
\coordinate (C) at (1,1);
\coordinate (D) at (0,1);
\coordinate (E) at (2,.5);
\coordinate (F) at (3,0);
\coordinate (G) at (4,0);
\coordinate (H) at (4,1);
\coordinate (I) at (3,1);
\draw[thick] (F)--(G)--(H)--(F);
\filldraw (F) circle (7pt);
\filldraw (G) circle (7pt);
\filldraw (H) circle (7pt);
\filldraw (I) circle (7pt);
\filldraw[white] (F) circle [radius=6pt] node {\textcolor{black}{$6$}};
\filldraw[white] (G) circle [radius=6pt] node {\textcolor{black}{$7$}};
\filldraw[white] (H) circle [radius=6pt] node {\textcolor{black}{$9$}};
\filldraw[white] (I) circle [radius=6pt] node {\textcolor{black}{$8$}};
\draw[thick] (A)--(B)--(C);
\filldraw (A) circle (7pt);
\filldraw (B) circle (7pt);
\filldraw (C) circle (7pt);
\filldraw (D) circle (7pt);
\filldraw (E) circle (7pt);
\filldraw[white] (A) circle [radius=6pt] node {\textcolor{black}{$1$}};
\filldraw[white] (B) circle [radius=6pt] node {\textcolor{black}{$3$}};
\filldraw[white] (C) circle [radius=6pt] node {\textcolor{black}{$4$}};
\filldraw[white] (D) circle [radius=6pt] node {\textcolor{black}{$2$}};
\filldraw[white] (E) circle [radius=6pt] node {\textcolor{black}{$5$}};
\draw (-2,.5) node {\textcolor{black}{$G_{134/2/5/679/8}=$}};
\end{tikzpicture} 
\end{center}
\end{example}

\begin{theorem}\label{the:independent}
Let $\{G_{\alpha}:\alpha\text{ is atomic}\}$ be a set of graphs such that $G_{\alpha}$  has $n$ vertices and $\pi(G_{\alpha})=\alpha$ for each $\alpha$. Then 
$$\{Y_{G_{\pi}}:\pi\vdash [n]\}$$
is a $\bQ$-basis for $\NCSym^n$. Additionally,   all $\{Y_{G_{\alpha}}:\alpha\text{ is atomic}\}$ are algebraically independent over $\bQ$ and freely generate $\NCSym$. 
\end{theorem}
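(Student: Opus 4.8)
The plan is to deduce everything from two facts already in hand: the multiplicativity of $Y_G$ along slash products (Proposition~\ref{prop:graph_mult}, which gives $Y_{G_\pi} = \prod_{i=1}^k Y_{G_{\alpha_i}}$ when $\pi = \alpha_1 \slashp \alpha_2 \slashp \cdots \slashp \alpha_k$ is the atomic decomposition), and the analogue of the fundamental theorem for the $e$-basis (Theorem~\ref{the:fundamentaltheorem}), together with an upper-triangularity argument relating the $Y_{G_\pi}$ to the $e_\pi$. So the first step is to set up the partial order. Fix $n$ and consider, for a set partition $\pi \vdash [n]$, the expansion of $Y_{G_\pi}$ in the $e$-basis. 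I would argue that $Y_{G_\pi} = \sum_{\sigma \leq \pi} c_{\pi\sigma}\, e_\sigma$ with $c_{\pi\pi} \neq 0$; that is, $Y_{G_\pi}$ is "unitriangular up to a nonzero scalar" with respect to refinement order on $\Pi_n$.

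To see this, combine Proposition~\ref{prop:gebhard_sagan} with the change of basis~\eqref{eq:p_to_e}: $Y_{G_\pi} = \sum_{S \subseteq E(G_\pi)} (-1)^{|S|} p_{\pi(S)} = \sum_{S} (-1)^{|S|} \frac{1}{\mupi(\minel,\pi(S))} \sum_{\tau \leq \pi(S)} \mupi(\tau,\pi(S)) e_\tau$. Every $\pi(S)$ satisfies $\pi(S) \leq \pi(G_\pi) = \pi$ (a connected component of a subgraph lies inside a connected component of the whole), so only $e_\tau$ with $\tau \leq \pi$ appear. For the leading coefficient $[e_\pi] Y_{G_\pi}$, the only $S$ with $\pi(S) = \pi$ contributes (via the $\tau = \pi(S) = \pi$ term, where $\mupi(\pi,\pi) = 1$), and one checks via Theorem~\ref{the:MobiusStan} and Proposition~\ref{prop:gebhard_sagan} that $\sum_{S : \pi(S) = \pi}(-1)^{|S|}$ has a definite sign — this is exactly the reasoning already used for $[e_{\maxel}]Y_G$ in Lemma~\ref{lem:coeff} and in the proof of Theorem~\ref{the:epos}, and it shows $[e_\pi]Y_{G_\pi} \neq 0$. (Alternatively, and more cleanly: by multiplicativity it suffices to check $[e_\alpha]Y_{G_\alpha} \neq 0$ for atomic $\alpha$, and since $G_\alpha$ is connected, $\pi(G_\alpha) = \alpha$ is a connected partition of $G_\alpha$, so there is at least one $S$ with $\pi(S) = \alpha$, and the sign argument applies.) Hence the transition matrix from $\{Y_{G_\pi} : \pi \vdash [n]\}$ to $\{e_\pi : \pi \vdash [n]\}$ is triangular with nonzero diagonal entries with respect to any linear extension of refinement order, so it is invertible over $\bQ$; since $\{e_\pi\}$ is a $\bQ$-basis of $\NCSym^n$, so is $\{Y_{G_\pi}\}$.

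For the algebraic independence and free generation statement, I would mimic the proof of Theorem~\ref{the:fundamentaltheorem} verbatim. By Proposition~\ref{prop:graph_mult} and the definition $G_\pi = G_{\alpha_1} \slashp \cdots \slashp G_{\alpha_k}$ for the atomic decomposition of $\pi$, the family $\{Y_{G_\pi}\}$ is multiplicative along atomic decompositions: $Y_{G_\pi} = \prod_{i=1}^k Y_{G_{\alpha_i}}$. Combined with the fact just proved that $\{Y_{G_\pi} : \pi \vdash [n], n \geq 1\}$ is a $\bQ$-basis of $\NCSym$, this is precisely the hypothesis that makes the generators $\{Y_{G_\alpha} : \alpha \text{ atomic}\}$ algebraically independent over $\bQ$ and freely generating: $\NCSym$ is the free associative (not commutative) algebra on these generators, because a basis consisting of all products-along-atomic-decompositions of a generating set is exactly a free generation statement for a free associative algebra graded by the atomic-decomposition monoid. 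One subtlety worth a sentence: the noncommutative slash product is not commutative, so "freely generate" here means as a free associative algebra, and the set partitions under $\slashp$ form the free monoid on atomic partitions — this is the content of the unique atomic decomposition recalled in the Background section, so nothing new is needed.

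The main obstacle is the leading-coefficient nonvanishing, i.e. showing $[e_\pi]Y_{G_\pi} \neq 0$; everything else is bookkeeping. The cleanest route is to reduce to the atomic case by multiplicativity (Lemma~\ref{lem:eprod} plus Proposition~\ref{prop:graph_mult} give $[e_\pi]Y_{G_\pi} = \prod_i [e_{\alpha_i}]Y_{G_{\alpha_i}}$), and then for connected $G_\alpha$ invoke the sign computation already packaged in Lemma~\ref{lem:coeff}: $[e_{\maxel_{|V(G_\alpha)|}}]$ applied to the connected graph $G_\alpha$ is $\frac{1}{(n-1)!}|[p_{\maxel}]Y_{G_\alpha}|$, which is strictly positive since $[p_{\maxel}]Y_{G_\alpha} \neq 0$ by Theorem~\ref{the:MobiusStan} (as $\maxel \in L_{G_\alpha}$ when $G_\alpha$ is connected and $\mul(\minel,\maxel) \neq 0$). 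This pins down $[e_\alpha]Y_{G_\alpha} > 0$ and closes the argument.
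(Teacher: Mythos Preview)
Your overall strategy --- upper-triangularity with respect to refinement, plus multiplicativity along atomic decompositions --- is the same as the paper's, and your free-generation paragraph is exactly what the paper does. However, the paper works directly in the $p$-basis via Theorem~\ref{the:MobiusStan}: every $\sigma\in L_{G_\pi}$ satisfies $\sigma\leq\pi$, and the leading coefficient $[p_\pi]Y_{G_\pi}=\mul(\minel,\pi)\neq 0$ is read off immediately. Your $e$-basis route just composes this with the change of basis~\eqref{eq:p_to_e}, so it is a detour rather than a genuinely different argument; your first computation of $[e_\pi]Y_{G_\pi}$ is in fact correct and amounts to $\mul(\minel,\pi)/\mupi(\minel,\pi)\neq 0$.

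There is, however, a real error in the ``alternatively'' parenthetical and in your final paragraph: you assert that $G_\alpha$ is connected for atomic $\alpha$, and then invoke Lemma~\ref{lem:coeff} for $[e_{\maxel}]$. This is false. Atomic means $\alpha$ cannot be written as $\sigma_1\slashp\sigma_2$; it does \emph{not} mean $\alpha$ has a single block. For instance $\alpha=13/2$ is atomic, yet $\pi(G_\alpha)=13/2$ forces $G_\alpha$ to have two connected components, so $\maxel\notin L_{G_\alpha}$ and Lemma~\ref{lem:coeff} gives no information about $[e_\alpha]Y_{G_\alpha}$. Your multiplicative reduction $[e_\pi]Y_{G_\pi}=\prod_i[e_{\alpha_i}]Y_{G_{\alpha_i}}$ is fine, but the atomic endgame as you wrote it collapses. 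The fix is either to drop the $e$-basis detour and argue directly in the $p$-basis as the paper does, or to keep your first (correct) computation and delete the ``connected $G_\alpha$'' material entirely.
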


\begin{proof}
Choose a set $\{G_{\alpha}:\alpha\text{ is atomic}\}$  such that $G_{\alpha}$  has $n$ vertices and $\pi(G_{\alpha})=\alpha$ for each $\alpha$. Given  $\pi\vdash[n]$  we know  by definition that if $\sigma \in L_{G_{\pi}}$ then $\sigma\leq \pi$. Hence, by Theorem~\ref{the:MobiusStan}, 
$$Y_{G_{\pi}}=\sum_{\sigma\leq \pi}c_{\sigma\pi}p_{\sigma}$$and $c_{\pi\pi}=\mul(\hat{0},\pi)\neq 0$ also from Theorem~\ref{the:MobiusStan}. From this we can conclude that the $Y_{G_\pi}$ for all $\pi\vdash[n]$ are linearly independent and hence $\{Y_{G_{\pi}}:\pi\vdash [n]\}$ is a $\bQ$-basis for $\NCSym^n$. 
 They are also multiplicative along atomic decompositions since Proposition~\ref{prop:graph_mult} shows that for $\pi=\alpha_1\slashp \alpha_2\slashp \cdots \slashp \alpha_k$, written  as its unique atomic decomposition, we have 
$$Y_{G_{\pi}}=\prod_{i=1}^k Y_{G_{\alpha_i}}.$$
Thus,  all $\{Y_{G_{\alpha}}:\alpha\text{ is atomic}\}$ are algebraically independent over $\bQ$ and freely generate NCSym. 
\end{proof}

Now we move on to identifying whether $Y_G$, for some graph $G$, is a known function in $\NCSym$. The first functions we will study are the Schur-like functions of Rosas and Sagan, $S_\lambda$ where $\lambda$ is an \emph{integer} partition. They are defined in \cite[Section 6]{RS}, and since we will need them only for the next result, we refer the reader there for the general definition, though note that $S_{(1^n)}=e_{\maxel}$, where $(1^n)$ is the integer partition consisting of $n$ parts equal to 1. The only other property we recall is that for $\lambda \vdash n$ \cite[Theorem 6.2 (iii)]{RS}
\begin{equation}\label{eq:rhoS}\rho(S_\lambda)=n!s_\lambda\end{equation}where $s_\lambda$ is the classical Schur function.

\begin{proposition}\label{prop:YGasS} Of the functions $\{S_\lambda \} _{\lambda \vdash n\geq 1}$ in $\NCSym$ and their scalar multiples only $S_{(1^n)}$ can be realized as $Y_G$ for some graph $G$. In particular
$$S_{(1^n)}=Y_{K_n}.$$
\end{proposition}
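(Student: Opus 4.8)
The plan is to prove the two assertions in turn. First I would establish the identity $S_{(1^n)} = Y_{K_n}$: since Rosas and Sagan define the Schur-like function via $S_{(1^n)} = e_{\maxel}$, and Lemma~\ref{lem:YGise} gives $Y_{K_\pi} = e_\pi$ for any set partition $\pi$, taking $\pi = \maxel$ (which yields $K_\pi = K_n$) gives $Y_{K_n} = e_{\maxel} = S_{(1^n)}$ immediately. This disposes of the ``in particular'' clause with essentially no work.

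For the main claim, I would argue by a degree/projection obstruction combined with a sign/positivity obstruction. Suppose $c\cdot S_\lambda = Y_G$ for some scalar $c$, some $\lambda \vdash n$, and some graph $G$; necessarily $G$ has $n$ vertices so that the homogeneous degrees match. Apply the projection map $\rho$: by Equation~\eqref{eq:rhoS} we get $\rho(Y_G) = X_G = c\,n!\,s_\lambda$, so $X_G$ is a nonzero scalar multiple of a single Schur function $s_\lambda$. The key structural fact to exploit is that $X_G$, when written in the elementary or power-sum basis of $\Sym$, has a controlled leading behavior: in particular the coefficient of $p_{(1^n)}$ (equivalently, evaluating via $\rho$ of Theorem~\ref{the:MobiusStan}) is $(-1)^{n-?}$ times something nonzero, forcing $c \ne 0$ and pinning down $\lambda$. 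More efficiently, I would work directly in $\NCSym$: by Theorem~\ref{the:xpos}, $Y_G = (-1)^{n-k}Z_G$ with $Z_G$ genuinely $\bx$-positive, where $k$ is the number of components of $G$; so $Y_G$ is $\bx$-positive or $\bx$-negative. Meanwhile $S_\lambda$ expanded in the $\bx$-basis must then also be $\bx$-positive or $\bx$-negative up to the scalar $c$. Combined with the $e$-positivity classification of Theorem~\ref{the:epos} — $Y_G$ is $e$-positive iff $G$ is a disjoint union of complete graphs, and is never $e$-negative — the only candidates are $G = K_{\lambda'}$ for some integer partition giving a union of complete graphs, and among those I would check (using $\rho(S_\lambda) = n!s_\lambda$ and the known fact that $X_{K_\mu} = \prod_i X_{K_{\mu_i}}$ is a product of $e$'s, which is Schur-positive but a multiple of a single $s_\lambda$ only when $\mu = (1^n)$, i.e. $s_{(1^n)} = e_n$) that the only surviving case is $\lambda = (1^n)$ with $G = K_n$.

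Concretely, the cleanest route for the exclusion step: if $c\,S_\lambda = Y_G$ then applying $\rho$ yields $c\,n!\,s_\lambda = X_G$. Now $X_G$ is always $p$-positive after the sign twist (Stanley), and more to the point $X_G$ is a product over connected components; a single Schur function $s_\lambda$ is not a nontrivial product in $\Sym$ unless $\lambda$ has at most... — rather than chase this, I would instead observe that $X_G = c\, n!\, s_\lambda$ forces $X_G$ to be Schur-positive \emph{and} to have exactly one Schur term, and by Gasharov's theorem (or directly) the only connected graphs whose chromatic symmetric function is a single Schur function are complete graphs $K_n$ with $X_{K_n} = n!\,e_n = n!\,s_{(1^n)}$; for disconnected $G = K_{\lambda_1}\cup\cdots$ one gets a product $\prod_i e_{\lambda_i}$ which is a single Schur function only when all $\lambda_i = 1$, but then $G = \overline{K_n}$ has $X_G = p_1^n$, the $n$-th power, which equals $\sum_{\mu\vdash n} f^\mu s_\mu$ and is a single Schur function only when $n=1$. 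Pulling these together forces $G = K_n$ (possibly with $n$ arbitrary, or the trivial $n=1$ case subsumed), $\lambda = (1^n)$, and $c = 1$.

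The main obstacle I anticipate is making the ``$X_G$ is a single Schur function implies $G = K_n$'' step airtight without invoking heavy machinery: one must rule out all disconnected graphs and all non-complete connected graphs cleanly. I would handle disconnected $G$ via the multiplicativity $X_{G_1\cup G_2} = X_{G_1}X_{G_2}$ and the fact that $s_\lambda$ is irreducible in $\Sym$ as a UFD only for one-row or one-column $\lambda$ (and $s_{(n)} = h_n$ is not of the form $X_G$ for connected $G$ with $n>1$ since $X_G$ always has a nonzero $e_n$-ish negative-free structure — or simpler: $[m_{(1^n)}]X_G = n! = $ number of proper colorings patterns forces $\lambda' $); and for non-complete connected $G$, Corollary~\ref{cor:2Bnoe} or the $e$-negativity appearing in the proof of Theorem~\ref{the:epos} already shows $Y_G$ is not $e$-positive, hence $X_G$ is not a positive multiple of $e_n = s_{(1^n)}$, while if $\lambda \ne (1^n)$ the function $\rho(S_\lambda) = n!\,s_\lambda$ is not $m$-multiplicity-free in the right way — here I would simply cite that $s_\lambda$ for $\lambda\ne(1^n)$ is not of the form $c\,X_G$ because its expansion in the $p$-basis or the count $[m_{(1^n)}]$ does not match any chromatic symmetric function's. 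If the authors have set up enough about $X_G$ this reduces to a short citation-backed paragraph; otherwise the honest work is the classification ``$X_G = c\,s_\lambda \Rightarrow G = K_n$,'' which I would isolate as the crux.
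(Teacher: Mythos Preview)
Your opening step is exactly the paper's: apply $\rho$ and use Equation~\eqref{eq:rhoS} to reduce $Y_G = c\,S_\lambda$ to the $\Sym$-level question ``when is $X_G$ a scalar multiple of a single Schur function $s_\lambda$?''. At that point, however, the paper simply cites \cite[Theorem~2.8]{ChovW2}, which already proves that $X_G = c\,s_\lambda$ forces $\lambda=(1^n)$, $G=K_n$, and $c=n!$ (so $c_\lambda=1$). That is the entire proof; your anticipated ``short citation-backed paragraph'' is in fact all that is needed, and none of the machinery from Theorems~\ref{the:xpos} or~\ref{the:epos} is invoked.

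Your attempts to prove the crux from scratch, on the other hand, have real gaps. The claim that $s_\lambda$ is irreducible in $\Sym$ only for one-row or one-column shapes is not something you can use without proof (and is not obviously true as stated). The disconnected case silently assumes $G$ is already a union of complete graphs before you have established it. Invoking Gasharov's theorem gives Schur-positivity for certain graphs, not a single-Schur-term characterization. And the $\bx$-positivity/$e$-positivity detour in $\NCSym$ does not by itself pin down $\lambda$, since you still need to know which $S_\lambda$ are $\bx$-signed or $e$-positive, which is no easier than the original question. So if \cite{ChovW2} were unavailable, your outline would not yet constitute a proof; the honest content of the proposition really lives in that cited classification for $X_G$.
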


\begin{proof} Assume $Y_G=c_\lambda S_\lambda$. By Equation~\eqref{eq:rhoS} we have 
$$X_G=\rho(Y_G)=\rho(c_\lambda S_\lambda)=c_\lambda n! s_\lambda$$that by \cite[Theorem 2.8]{ChovW2} is possible if and only if $\lambda = (1^n)$, $G=K_n$   and $c_\lambda = 1$. 
\end{proof}

For the functions indexed by set partitions we have one more family to define. The \emph{complete homogeneous symmetric function in $\NCSym$}, $h_\pi$ where $\pi \vdash [n]$, is given by \cite[Theorem 3.4]{RS}
$$h_\pi = \sum _{\sigma \leq \pi} |\mupi (\minel , \sigma)|p_\sigma .$$Due to the common proof technique, we will now consider all except the $\bx$-basis.

\begin{theorem}\label{the:YGasothers}
Of the functions $\{e_\pi \} _{\pi \vdash [n], n\geq 1}$, $\{h_\pi \} _{\pi \vdash [n], n\geq 1}$, $\{m_\pi \} _{\pi \vdash [n], n\geq 1}$ and $\{p_\pi \} _{\pi \vdash [n], n\geq 1}$ in $\NCSym$ and their scalar multiples only $\{e_\pi \} _{\pi \vdash [n], n\geq 1}$ can be realized as $Y_G$ for some graph $G$. In particular
$$e_\pi=Y_{K_\pi}.$$
\end{theorem}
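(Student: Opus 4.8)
The plan is to show that among $\{e_\pi\}$, $\{h_\pi\}$, $\{m_\pi\}$ and $\{p_\pi\}$, only the $e$-basis functions (and no nontrivial scalar multiples of any of them other than $e_\pi=Y_{K_\pi}$) arise as $Y_G$. The one positive statement, $e_\pi=Y_{K_\pi}$, is already Lemma~\ref{lem:YGise}, so the work is entirely in the negative direction. My approach is to apply the projection map $\rho$ and use the known classification of which classical symmetric functions are of the form $X_G$, exactly as in the proof of Proposition~\ref{prop:YGasS}. Since $\rho$ sends $m_\pi$, $p_\pi$, $e_\pi$ to scalar multiples of the corresponding classical $m_\lambda$, $p_\lambda$, $e_\lambda$ (with $\lambda=\lambda(\pi)$), and presumably sends $h_\pi$ to a scalar multiple of the classical $h_\lambda$, the hypothesis $Y_G = c\, b_\pi$ forces $X_G = \rho(Y_G)$ to be a scalar multiple of a classical monomial/power-sum/elementary/complete-homogeneous symmetric function. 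Then one invokes the classification (the same \cite[Theorem 2.8]{ChovW2}-type result cited in Proposition~\ref{prop:YGasS}, or Stanley's original observations) of which graphs have such $X_G$: the only $X_G$ that is a scalar multiple of a classical $e_\lambda$, $p_\lambda$, $m_\lambda$ or $h_\lambda$ is $X_{K_\pi}$ itself (a union of complete graphs), which is $e_\lambda$ up to the usual normalization, and in particular $X_G$ is never a scalar multiple of a classical $p_\lambda$ or $m_\lambda$ for any $G$ unless that scalar multiple happens to also be an $e$-expression, and never a multiple of $h_\lambda$ beyond the trivial overlaps.

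Concretely I would proceed as follows. First record $\rho(h_\pi)$: from $h_\pi=\sum_{\sigma\le\pi}|\mu_\Pi(\hat 0,\sigma)|p_\sigma$ and $\rho(p_\sigma)=p_{\lambda(\sigma)}$, compute $\rho(h_\pi)$ as an explicit scalar multiple of the classical $h_{\lambda(\pi)}$ (this is already in \cite[Theorem 2.1]{RS}); likewise recall $\rho(e_\pi)=\lambda(\pi)^!\,e_{\lambda(\pi)}$, $\rho(m_\pi)$ a scalar times $m_{\lambda(\pi)}$, and $\rho(p_\pi)=p_{\lambda(\pi)}$. Second, suppose $Y_G=c\,b_\pi$ for $b\in\{e,h,m,p\}$ and some scalar $c$ and set $\lambda=\lambda(\pi)$; applying $\rho$ gives $X_G = c'\,b_\lambda^{\mathrm{classical}}$ for a nonzero scalar $c'$. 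Third, cite the classification of when a classical $X_G$ is a scalar multiple of $e_\lambda$, $h_\lambda$, $m_\lambda$, or $p_\lambda$: the known answer (Stanley, and \cite{ChovW2}) is that $X_G$ is a positive scalar multiple of $e_\lambda$ exactly when $G$ is a disjoint union of complete graphs with block sizes $\lambda$, and is never a (nonzero) scalar multiple of $p_\lambda$ with $\ell(\lambda)<n$, nor of $m_\lambda$ or $h_\lambda$ unless $\lambda=(1^n)$ where these coincide with $e_{(1^n)}$ up to scalars — so in all cases either $b_\pi$ reduces to the $e$-case or no $G$ exists. Fourth, in the surviving case $b=e$, conclude $G$ is a disjoint union of complete graphs on the blocks, hence $G=\delta(K_\pi)$ for some relabelling; by Lemma~\ref{lem:YGise} and the Relabelling Proposition the scalar must be $1$ and $Y_G=e_\pi=Y_{K_\pi}$, and one checks the scalar multiples $c\,e_\pi$ with $c\neq 1$ are not of the form $Y_G$ by comparing, e.g., the coefficient $[e_{\hat 1_n}]Y_G$ which Lemma~\ref{lem:coeff} pins down.

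The main obstacle I expect is marshalling the precise form of the classical classification in the $p$-, $m$- and $h$-cases: while "$X_G$ is $e$-positive $\iff$ disjoint union of complete graphs" handles $e$, ruling out $X_G = c\,p_\lambda$ or $c\,m_\lambda$ or $c\,h_\lambda$ for \emph{all} $G$ requires an argument about expansions — for instance, $[m_{(1^n)}]X_G$ equals the number of proper colourings with all-distinct colours, which forces constraints incompatible with $X_G$ being supported on a single $p_\lambda$ or $m_\lambda$ unless $G$ is edgeless or complete; and for $h_\lambda$ one uses that $X_G$ expanded in the power-sum basis has leading term $p_{1^n}$ with coefficient $1$ (the number of acyclic orientations / from the $S=E(G)$ term has sign data) and nonnegativity/sign patterns that a scalar multiple of $h_\lambda$ violates unless $\lambda=(1^n)$. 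A clean way to organize all four at once, mirroring the "common proof technique" the authors advertise, is: apply $\rho$, then note each of $e_\lambda,h_\lambda,m_\lambda,p_\lambda$ is determined up to scalar by being supported on partitions $\mu\le\lambda$ (or $\mu\ge\lambda$) in dominance with a one-dimensional "extreme" piece, and then quote that $X_G$ has $[p_{1^n}]X_G = $ (number of acyclic orientations) $>0$ together with $X_G$ $e$-positive $\iff$ union of complete graphs to force $\lambda=(1^n)$ in the $m,h,p$ cases and $G=K_\pi$ in the $e$ case. Routine bookkeeping of the $\rho$-constants and the scalar $c$ finishes it.
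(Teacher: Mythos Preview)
Your approach is essentially the same as the paper's: apply $\rho$, use \cite[Theorem~2.1]{RS} for the projection constants, and invoke \cite[Theorem~2.8]{ChovW2} to classify which classical $X_G$ can be a scalar multiple of $e_\lambda$, $h_\lambda$, $m_\lambda$, or $p_\lambda$. The paper simply cites \cite[Theorem~2.8]{ChovW2} directly for all four families at once (after noting the trivial overlaps $e_{\minel}=h_{\minel}=p_{\minel}$ and $m_{\minel}=e_{\maxel}$), so your backup arguments via $[m_{(1^n)}]X_G$ and acyclic orientations are unnecessary; also note $\rho(e_\pi)=\lambda(\pi)!\,e_{\lambda(\pi)}$, not $\lambda(\pi)^!$.
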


\begin{proof} Since $e_{\minel}=h_{\minel}=p_{\minel}$ for $n\geq 1$ we consider this to be $e_{\minel}$ for convenience, and likewise consider $e_{\maxel}= m_{\minel}$ for $n\geq 1$ to be $e_{\maxel}$. 
Assume $Y_G=c_\pi b_\pi$ where $b=h,m$ or $p$ and $\pi \neq \minel$ for $n\geq 1$. Then
$$X_G=\rho(Y_G)=\rho(c_\pi b_\pi)=c_\pi \rho(b_\pi).$$Now \cite[Theorem 2.1]{RS} states that $\rho(b_\pi)=\lambda (\pi)! b_{\lambda (\pi)}$ if $b=h$, $\rho(b_\pi)=\lambda (\pi)^! b_{\lambda (\pi)}$ if $b=m$, and $\rho(b_\pi)= b_{\lambda (\pi)}$ if $b=p$. However, by \cite[Theorem 2.8]{ChovW2} none of these outcomes  {are} possible. By \cite[Theorem 2.1]{RS} we also have
$$X_G=\rho(Y_G)=\rho(c_\pi e_\pi)=c_\pi \rho(e_\pi) = c_\pi \lambda (\pi)! e_{\lambda (\pi)}$$that by \cite[Theorem 2.8]{ChovW2} is possible if and only if $G$ is a disjoint union of complete graphs and $c_\pi =1$. The final equation is then by Lemma~\ref{lem:YGise}.
\end{proof}

Finally, we consider the $\bx$-basis, for which we need the following lemma.

\begin{lemma} \label{lem:x_twoterms}
For any graph $G$ containing a connected component with at least three vertices, we have $Y_G$ has at least two terms in its expansion in the $\bx$-basis. 
\end{lemma}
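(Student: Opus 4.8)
The plan is to reduce to the connected case and then exploit the structure of $Y_G$ in the $\bx$-basis established by Theorem~\ref{the:xpos} together with the explicit coefficient formulas of Lemma~\ref{lem:coeff}. First I would observe that by Corollary~\ref{cor:Y_G} and Lemma~\ref{lem:xprod} it suffices to treat the case where $G$ itself is connected on $n\geq 3$ vertices: if $G$ has a connected component $H$ on at least three vertices, then $Y_G$ is a relabelling of $Y_H\cdot Y_{G'}$, where $G'$ is the union of the remaining components, and since $Y_H$ has at least two $\bx$-terms and multiplication by $\bx_\sigma$ via Lemma~\ref{lem:xprod} on disjoint label sets sends distinct $\bx$-terms to distinct $\bx$-terms, the product $Y_H Y_{G'}$ inherits at least two terms; applying $\delta^{-1}\circ(-)$ and Equation~\eqref{eq:xdelta} preserves the number of terms.

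So assume $G$ is connected with $n\geq 3$ vertices. I would produce two distinct set partitions $\pi$ with $[\bx_\pi]Y_G \neq 0$. The natural candidates are $\pi = \maxel$ and a carefully chosen two-block partition. For the first, I need $[\bx_{\maxel}]Y_G\neq 0$; this should follow because $\maxel \in L_G$ (as $G$ is connected), so by Theorem~\ref{the:MobiusStan} we have $[p_{\maxel}]Y_G = \mu_L(\minel,\maxel)\neq 0$, and then by the change-of-basis Equation~\eqref{eq:pasx} the coefficient of $\bx_{\maxel}$ in $Y_G$ equals the coefficient of $p_{\maxel}$ in $Y_G$ (since $\maxel$ is the unique top element, $p_{\maxel} = \sum_{\sigma\leq\maxel}\bx_\sigma$ and no other $p_\pi$ contributes $\bx_{\maxel}$), which is nonzero. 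For the second term, I would find a two-block set partition $\pi=B_1/B_2$ of $[n]$ with $[\bx_\pi]Y_G\neq 0$; the cleanest route is to pick the ``finest refinement of $\maxel$'' appearing in $L_G$. Since $G$ is connected on $n\geq 3$ vertices, $L_G$ contains $\maxel$ but also contains at least one connected partition strictly below it — for instance take any spanning tree of $G$, contract all but one of its edges, and the resulting connected partition $\pi$ has exactly two blocks. Then by Theorem~\ref{the:MobiusStan}, $[p_\sigma]Y_G\neq 0$ for all $\sigma\in L_G$, and expanding $Y_G=\sum_{\sigma\in L_G}\mu_L(\minel,\sigma)p_\sigma$ into the $\bx$-basis via Equation~\eqref{eq:pasx}, the coefficient $[\bx_\pi]Y_G = \sum_{\sigma\in L_G,\ \sigma\geq\pi}\mu_L(\minel,\sigma)$; I would argue this is nonzero.

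The main obstacle will be showing that this last sum is genuinely nonzero rather than accidentally cancelling. Here I would instead appeal directly to Lemma~\ref{lem:coeff}: if I choose $\pi=B_1/B_2$ to be a two-block \emph{connected} partition of $G$ (which exists by the spanning-tree contraction above), then Lemma~\ref{lem:coeff} gives
$$[e_{B_1/B_2}]Y_G = -\frac{1}{(n-1)!}\bigl|[p_{\maxel}]Y_G\bigr| + \frac{(-1)^n}{(|B_1|-1)!(|B_2|-1)!}\sum_{S\subseteq E,\ \pi(S)=B_1/B_2}(-1)^{|S|},$$
and this is less robust. The genuinely clean argument, which I would ultimately adopt, is to use Theorem~\ref{the:xpos}'s conclusion that $Y_G = (-1)^{n-1}Z_G$ with $Z_G$ being $\bx$-\emph{positive}, and then show $Z_G$ cannot be a single nonzero multiple of one $\bx_\sigma$. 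Indeed if $Y_G = c\,\bx_\sigma$ for a single $\sigma\vdash[n]$ and scalar $c\neq 0$, then applying $\rho$ gives $X_G = c\,\rho(\bx_\sigma)$, and $\rho(\bx_\sigma)$ is a scalar multiple of a single power-of-Schur-type symmetric function depending only on $\lambda(\sigma)$ — but it is known (and I would cite the analogous fact used in Proposition~\ref{prop:YGasS} and Theorem~\ref{the:YGasothers} via \cite[Theorem 2.8]{ChovW2}) that $X_G$ for a connected graph on $\geq 3$ vertices is never a scalar multiple of a single Schur function unless $G=K_n$; and if $G=K_n$ with $n\geq 3$ then $Y_{K_n}=e_{\maxel}$, whose $\bx$-expansion has more than one term by Corollary~\ref{cor:eisxpos} combined with a direct check (or by Equation~\eqref{eq:p_to_e} and~\eqref{eq:pasx}). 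I expect reconciling these two possible routes — the direct Möbius-sum computation versus the $\rho$-pushforward argument — and choosing the one that extends most smoothly will be where the real care is needed.
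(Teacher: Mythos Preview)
Your reduction to the connected case and your identification of $\bx_{\maxel}$ as one nonzero term are both correct and match the paper. The genuine gap is in your search for the second term.

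Your $\rho$-pushforward route does not work as stated: the claim that $\rho(\bx_\sigma)$ is a scalar multiple of a single Schur function (or any single classical basis element) is false in general. For example $\bx_{1/2}=p_{1/2}$, so $\rho(\bx_{1/2})=p_1^2=s_2+s_{11}$. Hence even if $Y_G=c\,\bx_\sigma$, the image $X_G=c\,\rho(\bx_\sigma)$ need not be of the form covered by \cite[Theorem~2.8]{ChovW2}, and that citation gives you nothing. Your earlier attempt via a \emph{connected} two-block partition also runs into exactly the cancellation problem you flag: the coefficient $[\bx_\pi]Y_G=\sum_{\sigma\in L_G,\,\sigma\geq\pi}\mu_L(\minel,\sigma)$ is a genuine sum with several terms, and there is no soft reason it should be nonzero.

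The paper's fix is to reverse your instinct: for non-complete $G$, choose a two-block partition that is \emph{not} in $L_G$. Pick non-adjacent vertices $u,v$ and set $\sigma=\{u,v\}/([n]\setminus\{u,v\})$. Since $\sigma\notin L_G$, the only $\pi\in L_G$ with $\pi\geq\sigma$ is $\maxel$, so $[\bx_\sigma]Y_G=\mu_L(\minel,\maxel)\neq 0$ with no cancellation to worry about. For the remaining case $G=K_n$ (where every two-block partition \emph{is} connected and this trick is unavailable), the paper does the explicit computation you allude to: with $L_G=\Pi_n$ and $\pi=1/23\cdots n$, one gets $[\bx_\pi]Y_G=\mupi(\minel,\pi)+\mupi(\minel,\maxel)=(-1)^{n-1}(n-2)(n-2)!\neq 0$ for $n\geq 3$.
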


\begin{proof}Consider a connected graph $G$ with $n\geq 3$ vertices. Substituting  Equation~\eqref{eq:pasx} into Theorem~\ref{the:MobiusStan}  we get 
$$Y_G=\sum_{\pi \in L_G}\mul(\minel, \pi)\sum_{\sigma\leq \pi }\bx_{\sigma}$$and $\maxel\in L_G$
so the coefficient of $\bx_{\maxel}$ is $\mul(\minel,\maxel)\neq 0$. If $G$ is not the complete graph then there exist two vertices that are not adjacent. Say their labels are $u$ and $v$ with $u<v$. Consider the set partition 
$$\sigma=uv/1\cdots \hat u \cdots \hat v \cdots n \notin L_G$$where $\hat u, \hat v$ means that $u$ and $v$ are removed. Note the coefficient of $\bx_{\sigma}$ is also $\mul(\minel,\maxel)\neq 0$, and we have two nonzero terms. 
If $G$ is the complete graph, then the poset $L_G=\Pi_n$, and using Equations~\eqref{eq:0to1} and \eqref{eq:0topi} we can calculate the coefficient of $\pi=1/23\cdots n$  is 
\begin{align*}\mu_{\Pi}(\minel, \pi)+\mu_{\Pi}(\minel,\maxel)
&=(-1)^{n-2}(n-2)!+ (-1)^{n-1}(n-1)!\\
&=((-1)^{n-2}+ (-1)^{n-1}(n-1))(n-2)!\\
&=(-1)^{n-1}(n-2)(n-2)!\neq 0\end{align*} since $n>2$. 

Lastly, consider a graph $G$ with a connected component with at least three vertices, $\tilde{G}$. By Corollary~\ref{cor:Y_G} and Lemma~\ref{lem:xprod} we know that $Y_G$ has at least as many terms as $Y_{\tilde{G}}$ and we are done. 
\end{proof}

Our final theorem completes the classification of which known functions in $\NCSym$ can be realized as $Y_G$ for some graph $G$.

\begin{theorem}\label{the:YGasx} Of the functions $\{\bx_\pi \} _{\pi \vdash [n], n\geq 1}$ in $\NCSym$ and their scalar multiples only $\pm \bx _\pi$ where $\pi = B_1/B_2/\cdots /B_{\ell(\pi)}$ and $|B_i| = 1, 2$ for $1\leq i \leq \ell(\pi)$ can be realized as $Y_G$ for some graph $G$. In particular
$$\bx_\pi=(-1)^{t_\pi}Y_{K_\pi}$$where $t_\pi = $ the number of $|B_i|=2$ for $1\leq i \leq \ell(\pi)$.
\end{theorem}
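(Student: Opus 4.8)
The plan is to split the statement into two halves: first the positive direction, namely that $\bx_\pi = (-1)^{t_\pi} Y_{K_\pi}$ whenever $\pi$ has all blocks of size $1$ or $2$; and second the negative direction, that no scalar multiple of $\bx_\pi$ can equal $Y_G$ for any graph $G$ when $\pi$ has a block of size $\geq 3$.

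For the positive direction, I would exploit multiplicativity. Write $\pi = \alpha_1 \slashp \alpha_2 \slashp \cdots \slashp \alpha_k$ as its atomic decomposition; since all blocks have size $1$ or $2$, each atomic piece $\alpha_i$ is either a single-element block (atomic partition $1$) or a single two-element block (atomic partition $12$). By Lemma~\ref{lem:xprod} we have $\bx_\pi = \prod_i \bx_{\alpha_i}$, and by Equation~\eqref{eq:kprod} together with Proposition~\ref{prop:graph_mult} and Lemma~\ref{lem:YGise} we have $Y_{K_\pi} = \prod_i Y_{K_{\alpha_i}} = \prod_i e_{\alpha_i}$. So it suffices to check the two atomic cases: $\bx_1 = p_1 = e_1 = Y_{K_1}$ (so the sign is $+1$, contributing nothing to $t_\pi$), and $\bx_{12} = -e_{12} = -Y_{K_2}$, which follows from Equation~\eqref{eq:xasp} since $\mu_\Pi(\hat 0_2, \hat 1_2) = -1$ gives $\bx_{12} = p_{12} - p_{1/2}$, and comparing with $e_{12} = p_{12}' \ldots$; more cleanly, use \eqref{eq:p_to_e} with $n=2$ to get $p_{12} = -e_{12} + e_{1/2}$ and \eqref{eq:pasx} to get $p_{12} = \bx_{12} + \bx_{1/2}$, and since $\bx_{1/2} = p_{1/2} = e_{1/2}$ and $e_{1/2} = e_1 e_1$, we can isolate $\bx_{12} = -e_{12}$. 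Multiplying across the $k$ atomic factors gives exactly the sign $(-1)^{t_\pi}$ where $t_\pi$ counts the size-$2$ blocks. This direction is routine.

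The negative direction is the main obstacle, and I would handle it by cases on the block structure of $\pi$. If $\pi$ has a block of size $\geq 3$, then $K_\pi$ has a connected component ($K_{|B_i|}$ with $|B_i|\geq 3$) with at least three vertices. Suppose for contradiction $Y_G = c_\pi \bx_\pi$ for some graph $G$ and nonzero scalar $c_\pi$. Then $Y_G$ has exactly one term in the $\bx$-basis, so by the contrapositive of Lemma~\ref{lem:x_twoterms}, every connected component of $G$ has at most two vertices — i.e., $G$ is a disjoint union of $K_1$'s and $K_2$'s, hence $G = K_\tau$ for some $\tau$ with all blocks of size $1$ or $2$. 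But then by the positive direction just proved, $Y_G = Y_{K_\tau} = (-1)^{t_\tau}\bx_\tau$, which has a single $\bx$-term indexed by a set partition $\tau$ whose blocks all have size $\leq 2$; since $\bx$ is a basis, $c_\pi \bx_\pi = \pm \bx_\tau$ forces $\pi = \tau$, contradicting that $\pi$ has a block of size $\geq 3$. This also simultaneously pins down which $\pi$ and which scalars work in the positive case: the scalar must be exactly $(-1)^{t_\pi}$ (any other scalar would make $Y_G = c\,\bx_\pi$ with $c \neq \pm 1$, but $Y_{K_\pi}$ already equals $(-1)^{t_\pi}\bx_\pi$ and $\bx$ is a basis), so no other scalar multiples arise. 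The one remaining check is that Lemma~\ref{lem:x_twoterms} is stated for graphs with a component of $\geq 3$ vertices, which is precisely our situation, so no extra argument is needed there.

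I would then assemble these pieces: the positive direction supplies the explicit formula and shows $\pm\bx_\pi$ is realized when blocks are small; Lemma~\ref{lem:x_twoterms} plus the basis property of $\{\bx_\pi\}$ rules out everything else and also rules out non-unit scalar multiples. The only subtlety to be careful about is bookkeeping the sign: one should verify that the atomic decomposition of $K_\pi$ corresponds block-by-block to that of $\pi$ (immediate from \eqref{eq:kprod} and the fact that $K_{|B|}$ on a single block is connected, hence atomic as a set partition exactly when $|B|\in\{1,2\}$ — and in general $K_\pi$'s components are the blocks, so its relevant atomic factors are the single blocks), and that each size-$2$ block contributes one factor of $-1$ while size-$1$ blocks contribute $+1$, giving $(-1)^{t_\pi}$ overall.
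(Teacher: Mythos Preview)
Your overall structure matches the paper's: use Lemma~\ref{lem:x_twoterms} to force $G$ to be a union of $K_1$'s and $K_2$'s, then compute $Y_{K_\pi}$ explicitly and compare. The negative direction is fine.

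However, your positive direction has a genuine gap. You write $\pi = \alpha_1 \slashp \cdots \slashp \alpha_k$ as its atomic decomposition and assert that ``since all blocks have size $1$ or $2$, each atomic piece $\alpha_i$ is either a single-element block (atomic partition $1$) or a single two-element block (atomic partition $12$).'' This is false. For instance $\pi = 13/2$ has both blocks of size $\leq 2$, yet $\pi$ is itself atomic; similarly $14/25/3$ is atomic with three blocks. Atomicity of a set partition is about whether the label set splits as an initial segment and a final segment compatible with the blocks, not about block sizes. So your reduction to the two base cases $e_1 = \bx_1$ and $e_{12} = -\bx_{12}$ does not go through: after factoring along the atomic decomposition you are still left with factors like $e_{13/2}$ for which you have not established $e_\alpha = \pm \bx_\alpha$.

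The paper repairs exactly this point by introducing a permutation $\delta \in \fS_n$ with $\delta(K_\pi) = K_{\hat 1_{|B_1|}} \slashp \cdots \slashp K_{\hat 1_{|B_{\ell(\pi)}|}}$, so that after relabelling the graph \emph{is} a genuine slash product of $K_1$'s and $K_2$'s. One then applies Corollary~\ref{cor:Y_G} (or the Relabelling Proposition) together with Lemma~\ref{lem:xprod} and Equation~\eqref{eq:xdelta} to pull the identity back through $\delta^{-1}$. Your argument can be fixed in the same way: replace the atomic decomposition of $\pi$ by a relabelling that sorts the blocks into consecutive intervals, do the computation there, and then undo the permutation using $\delta^{-1}\circ \bx_{\delta(\pi)} = \bx_\pi$.
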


\begin{proof} Assume $Y_G=c_\pi\bx_\pi$ for some $\pi \vdash [n], n\geq 1$. By Lemma~\ref{lem:x_twoterms} we know $G$ only contains connected components with one or two vertices. Therefore $G=K_\pi$ where $\pi = B_1/B_2/\cdots /B_{\ell(\pi)}$ and $|B_i| = 1, 2$ for $1\leq i \leq \ell(\pi)$. By using Lemma~\ref{lem:YGise}, Proposition~\ref{prop:gebhard_sagan} and Equation~\eqref{eq:xasp}, respectively, twice we   have 
\begin{align*}
e_1&=Y_{K_1}=p_1=\bx_1\\
e_{12}&=Y_{K_{12}}=-p_{12}+p_{1/2}=-\bx _{12}.
\end{align*}
Thus by choosing $\delta \in \fS _n$ such that $$\delta(K_\pi) = K_{|B_1|}\slashp K_{|B_2|}\slashp \cdots \slashp K_{|B_{\ell(\pi)}|}= K_{\hat 1_{|B_1|}}\slashp K_{\hat 1_{|B_2|}}\slashp \cdots \slashp K_{\hat 1_{|B_{\ell(\pi)}|}}$$we have the following equalities, given respectively by Corollary~\ref{cor:Y_G}, Lemma~\ref{lem:YGise}, the above substitutions to the $\bx$-basis, Lemma~\ref{lem:xprod}, Equation~\eqref{eq:xdelta} and the definition of $\delta$
\begin{align*}
Y_{K_\pi}&= \delta ^{-1} \circ (Y_{K_{|B_1|}} Y_{K_{|B_2|}}  \cdots   Y_{K_{|B_{\ell(\pi)}|}})= \delta ^{-1} \circ (e_{\hat 1_{|B_1|}} e_{\hat 1_{|B_2|}}  \cdots   e_{\hat 1_{|B_{\ell(\pi)}|}})\\
&= \delta ^{-1} \circ ((-1)^{t_\pi}\bx_{\hat 1_{|B_1|}} \bx_{\hat 1_{|B_2|}}  \cdots   \bx_{\hat 1_{|B_{\ell(\pi)}|}})= (-1)^{t_\pi}\delta ^{-1} \circ (\bx_{\hat 1_{|B_1|}\slashp \hat 1_{|B_2|} \slashp \cdots   \slashp \hat 1_{|B_{\ell(\pi)}|}})\\
&= (-1)^{t_\pi}  (\bx_{\delta ^{-1}(\hat 1_{|B_1|} \slashp \hat 1_{|B_2|} \slashp  \cdots \slashp  \hat 1_{|B_{\ell(\pi)}|})})= (-1)^{t_\pi}  \bx_{\pi}\\
\end{align*}where $t_\pi = $ the number of $|B_i|=2$ for $1\leq i \leq \ell(\pi)$.
\end{proof}

\begin{example}\label{ex:YGasx2} Returning to Example~\ref{ex:YG} we have  $Y_{K_{13/2}}=-\bx _{{13/2}}$.
\end{example}

\section*{Acknowledgements}\label{ack} {We would like to thank the referee for the care they took with our paper.}


\bibliographystyle{plain}

\def\cprime{$'$}

\end{document}